\documentclass[leqno]{siamltex}
%\documentclass[a4paper]{article}
%%%%%%%%%%%%%%%%%%%%%%%%%%%%%%%%%%%%%%%%%%%%%%%%%%%%%%%%%%%%%%%%%%%%%%%%%%%%%%%%%%%%%%%%%%%%%%%%%%%%%%%%%%%%%%%%%%%%%%%%%%%%%%%%%%%%%%%%%%%%%%%%%%%%%%%%%%%%%%%%%%%%%%%%%%%%%%%%%%%%%%%%%%%%%%%%%%%%%%%%%%%%%%%%%%%%%%%%%%%%%%%%%%%%%%%%%%%%%%%%%%%%%%%%%%%
\usepackage{amsfonts}
\usepackage{amsmath, amssymb}
\usepackage{color}
\usepackage{graphicx}
\usepackage[bookmarks,colorlinks=true,citecolor=blue]{hyperref}
\usepackage{enumerate}
\usepackage{bbm}

\usepackage{tikz}
\usetikzlibrary{calc,patterns,arrows}

\usepackage{todonotes}

\DeclareMathOperator{\Div}{div}
\DeclareMathOperator{\vspan}{span}

\def\eps{\varepsilon}
\def\TH{{\mathcal{T}_{H}}}
\def\R{\mathbb{R}}
\newcommand{\Z}{\mathbb{Z}}

\newtheorem{remark}[theorem]{Remark}
\newtheorem{assumption}[theorem]{Assumption}

\newcommand{\VHext}{{V}_H^{ext}}

\newcommand{\VH}{{V}_H}
\newcommand{\ZHext}{{Z}_H^{ext}}

\newcommand{\ZH}{{Z}_H}
\newcommand{\espHC}{H^2 (\Omega) \cap C^1 (\bar\Omega)}
\newcommand{\average}[1]{\langle#1\rangle}

\newcommand{\figref}[1]{Fig.~\ref{#1}}

% Colon - equal hack
% http://projekte.dante.de/DanteFAQ/Symbole#20
\mathchardef\ordinarycolon\mathcode`\:
\mathcode`\:=\string"8000
\begingroup \catcode`\:=\active
  \gdef:{\mathrel{\mathop\ordinarycolon}}
\endgroup

\pgfdeclarepatternformonly{XCircles}{\pgfqpoint{-1pt}{-1pt}}{\pgfqpoint{1cm}{1cm}}{\pgfqpoint{0.5cm}{0.5cm}}
{
  \pgfsetlinewidth{0.4pt}
  \pgfpathellipse{\pgfqpoint{0.53cm}{0.54cm}}{\pgfqpoint{0cm}{0.1cm}}{\pgfqpoint{0.1cm}{0cm}}
  %\pgfpatharc{0}{0}{0.5cm}
  \pgfusepath{stroke}
}

\pgfdeclarepatternformonly[\XHatchSize]{XHatch}{\pgfqpoint{-1pt}{-1pt}}{\pgfqpoint{\XHatchSize}{\XHatchSize}}{\pgfqpoint{\XHatchSize}{\XHatchSize}}
{
  \pgfsetlinewidth{0.2pt}
  \pgfpathmoveto{\pgfqpoint{0pt}{0pt}}
  \pgfpathlineto{\pgfqpoint{\XHatchSize}{\XHatchSize}}
  \pgfusepath{stroke}
}

\newdimen\XHatchSize
\tikzset{
    XHatchSize/.code={\XHatchSize=#1},
    XHatchSize=10pt
}

\title{Non-Conforming Multiscale Finite Element Method for Stokes Flows in
Heterogeneous Media. Part II: error estimates for periodic microstructure }
\author{G. Jankowiak\footnotemark[1] \and A. Lozinski\footnotemark[2] }
\date{August 2015}

\begin{document}

\maketitle

\begin{abstract}
This paper is dedicated to the rigorous numerical analysis of a Multiscale Finite Element Method (MsFEM) for the Stokes system,
    when dealing with highly heterogeneous media,
    as proposed in {\it B.P.~Muljadi et al.,  Non-conforming multiscale finite Element method for Stokes flows in heterogeneous media. Part I: Methodologies and numerical experiments}, SIAM MMS (2015), \textbf{13}(4) {1146-–1172}. The method is in the vein of the
    classical Crouzeix-Raviart approach. It is  generalized here to arbitrary sets of weighting functions used to enforced continuity across the mesh edges. We provide error bounds for a particular set of weighting functions in a periodic setting, using an accurate estimate of the homogenization error.    Numerical experiments demonstrate an improved accuracy of the present variant with respect to that of Part I, both  in the periodic case and in a broader setting.
\end{abstract}

\begin{keywords} 
Crouzeix-Raviart Element, Multiscale Finite Element Method, Stokes Equations, Homogenization. 
\end{keywords}

\begin{AMS}
35J15, 65N12, 65N30
\end{AMS}

\renewcommand{\thefootnote}{\fnsymbol{footnote}}
\footnotetext[1]{\href{mailto:gaspard.jankowiak@math.cnrs.fr}{gaspard.jankowiak@math.cnrs.fr}. 
WPI - Universit\"{a}t Wien, 
1090 Vienna, Austria.}
\footnotetext[2]{\href{mailto:alexei.lozinski@univ-fcomte.fr}{alexei.lozinski@univ-fcomte.fr}. 
Laboratoire de Math\'ematiques, UMR CNRS 6623, Univ. Bourgogne
Franche-Comt\'e, 25030 Besan\c{c}on, France.}
%\footnotetext[5]{Support in common for the first and second
%authors.}
\renewcommand{\thefootnote}{\arabic{footnote}}

\section{Introduction}
\label{sec:intro}

We consider the Stokes problem in the perforated domain $\Omega^\varepsilon:=\Omega\setminus B^{\varepsilon}$, $\Omega\subset\R^2$: find $u:\Omega^\varepsilon\rightarrow\mathbb{R}^2$ and $
p:\Omega^\varepsilon\rightarrow\mathbb{R}$, solution of
\begin{alignat}{2}
-\Delta u+\nabla p &= f&\quad&\text{ on }\Omega^\varepsilon\,,  \label{genP} \\
\Div u &= 0&&\text{ on }\Omega^\varepsilon\,,  \label{genD} \\
u &= 0&&\text{ on }\partial\Omega^\varepsilon\,,  \label{genB}
\end{alignat}
where $f:\Omega\rightarrow\mathbb{R}^2$ is a given function, assumed
sufficiently regular on $\Omega$. 

We are interested in the situations where the perforations $B^\eps$ have a complex structure, making a direct numerical solution  of problem (\ref{genP})--(\ref{genB}) very expensive. Typically,  $B^\eps$ is assumed to be a set of obstacles of average size and average inter-obstacle distance $\eps<<diam(\Omega)$, so that the mesh resolving all the features of the perforated domain $\Omega^\eps$ is too complex. Our goal is to devise an efficient numerical method that employs a relatively coarse mesh of size $H\ge\eps$ (or even  $H>>\eps$). We borrow the concept of Multiscale Finite Element Method (MsFEM) \cite{thomhou, houefendiev}, where the multiscale basis functions are pre-calculated on each cell of the coarse mesh, using a local sufficiently fine mesh, to represent a typical behavior of the microscopic structure of the flow. The global approximation to the solution of the problem in $\Omega^\eps$ is then constructed as the Galerkin projection on the space spanned by these basis functions.

The particular variant of MsFEM pursued in this article is inspired by classical non-conforming Crouzeix-Raviart finite elements \cite{CRRairo}. The idea of Crouzeix-Raviart MsFEM was first developed in \cite{MsFEMCR1, MsFEMCR2} for diffusion problems either with highly oscillating coefficients or posed on a perforated domain. It was also extended to advection-diffusion problems in \cite{muljadicrmsfem} and to Stokes equation in the Part I of the present series of papers \cite{StokesPart1}. In the construction of Crouzeix-Raviart multiscale basis functions, the conformity between coarse elements is not enforced in a strong sense. The basis functions are required to be continuous only in a weak (finite element) sense, i.e. merely the averages of the jumps of these functions vanish at coarse element edges. The boundary conditions at the edges are then provided by  a natural decomposition of the entire functional space into the sum of unresolved fine scales and the finite set of multiscale base functions. In the present article, we generalize this idea by introducing the weights into the averages over the edges in the definition of the functional spaces. This additional flexibility allows us to construct a more accurate variant of Crouzeix-Raviart MsFEM, as confirmed by the numerical experiments at the end of this article. Moreover, we are now able to provide a rigorous {\it a priori} error bounds in terms of $H$ and $\eps$ in a periodic setting, i.e. when $B^\eps$ is populated by the same pattern repeated periodically on a grid of size $\eps$.

% In some formulas below, we will extend the
% solution $u$ by 0 inside the holes $B^{\varepsilon}$ so that we can
% understand this solution either as a function from $H_{0}^{1}(\Omega^{
% \varepsilon})$ or from $H_{0}^{1}(\Omega)$.

% From now on and unless stated otherwise, the notation $C$ stands for a constant that is independent from
% $\varepsilon$, $H $, $f$ and $u^{\varepsilon}$, and that may vary from one
% line to the next.
% 

Let us mention briefly other approaches which can be applied to similar problems:  wavelet-based homogenization method \cite{dorobantuengquist}, variational multiscale method \cite{nolenetal}, equation-free computations \cite{kevrekidisetal}, heterogeneous multiscale method \cite{weinanengquist} and many others. For viscous, incompressible flows, multiscale methods based on homogenization theory for solving slowly varying Stokes flow in porous media have been studied in \cite{brownetal, brownefendievhoang}. Returning to the MsFEM-type approaches, we should mention a big amount of work on the oversampling approach, first introduced in the original work \cite{thomhou} to provide a better approximation of the edge boundary condition of the multiscale basis functions. Oversampling here means that the local problem in the coarse element are extended to a domain larger than the element itself, but only the interior information would be communicated to the coarse scale equation. Various extensions of the sampled domain lead to various oversampling methods, cf. \cite{houefendiev, chuetal, henningpeterseim, efendievetal}. The Crouzeix-Raviart MsFEM considered here does not require oversampling. A numerical comparison Crouzeix-Raviart MsFEM with oversampling one was performed in \cite{MsFEMCR1, MsFEMCR2} for diffusion problems. It revealed that both methods yield at least qualitatively the same results, while Crouzeix-Raviart MsFEM outperforms all the other variants in the tests on perforated domains.

This paper is organized as follows. The Crouzeix-Raviart MsFEM is presented in section \ref{sec:crmsfem}. We recall there namely the construction from Part I \cite{StokesPart1} and explain and motivate some modifications and generalization we make to this construction here. We also announce there the main theoretical result of the paper: an a priori error bound in the case of periodic perforations. The rest of the paper (with the exception of some numerical experiments) is constrained to this periodic setting. Section \ref{sec:homog} deals with the homogenization theory. We prove there an estimate of the error committed by the approximation of the Stokes equations with the Darcy ones. Section \ref{sec:preliminaries} deals with some technical lemmas. Section \ref{sec:proof} presents the proof of the MsFEM error bound. Finally, the numerical tests are reported in Section \ref{sec:numres}.

%%%%%%%%%%%%%%%%%%%%%%%%%%%%%%%%%%%%%%%%%%%%%%%%%%%%%%%%%%%%%%
\section{MsFEM \emph{\`{a} la} Crouzeix-Raviart}\label{sec:crmsfem}

We assume henceforth that $\Omega$ is a polygonal domain. We define a mesh
$\mathcal{T}_{H}$ on $\Omega$, \emph{i.e.} a decomposition of $\Omega$ into
polygons, each of diameter at most $H$, and denote $\mathcal{E}_{H}$ the set
of all the edges of $\mathcal{T}_{H}$, $\mathcal{E}_{H}^{int}\subset\mathcal{E}_H$ the internal edges and $\mathcal E(T)$ the
set of edges of~$T\in\mathcal T_H$. Note that we mesh $\Omega$
and not the perforated domain $\Omega^\varepsilon$. This allows us to use
coarse elements (independently of the fine scales present in the geometry of $
\Omega^\varepsilon$) and leaves us with a lot of flexibility: some
mesh nodes may be in $B^{\varepsilon}$, and likewise some edges may intersect $
B^{\varepsilon}$.

We assume that the mesh does not have any hanging nodes, \emph{i.e.} each internal edge is shared by exactly two mesh cells.
In addition, $\mathcal{T}_{H}$ is assumed to be quasi-uniform in the following
sense: fixing a polygon~$\overline{T} \subset \mathbb \R^2$ as reference element (one can also have a finite collection of reference elements), for any mesh element $T\in\mathcal{T}_{H}$, there exists a smooth invertible mapping $K:\overline{T}\to T$ such that $\|\nabla
K\|_{L^{\infty}}\leq CH$, $\|\nabla K^{-1}\|_{L^{\infty}}\leq CH^{-1}$, $C$
being some universal constant independent of~$T$, which we will refer to as the
regularity parameter of the mesh. To avoid some technical complications, we
also assume that the mappings $K$ are affine on every edge of
$\partial\overline{T}$. These assumptions are obviously met by a triangular mesh satisfying the minimum angle condition (see \emph{e.g.}~\cite[Section~4.4]{brenner}), but our approach carries over to quadrangles, which are in
fact used for our numerical computations, or to general polygonal meshes (in the flavor of Virtual Finite Elements \cite{VirtualFE}).

We shall use the usual notations $L^2(\omega)$, $H^k(\omega)$ for Sobolev spaces on a domain $\omega$. We shall also denote $L^2_0(\omega) = \left\{ p\in L^2(\omega) : \int_{\omega} p = 0 \right\}$ and $H^1_0(\omega) = \left\{ u\in H^1(\omega) : u_{\partial\omega} = 0 \right\}$. We shall implicitly identify the functions in $H^1_0(\Omega^\eps)$ with those in $H^1(\Omega)$ vanishing on $B^\eps$. The weak form of (\ref{genP})--(\ref{genB}) can be written as: find $(u,p)\in H^1_0(\Omega^\eps)^2\times L^2_0(\Omega^\eps)$ such that
$$
c((u,p),(v,q))=\int_\Omega f\cdot v,\quad\forall (v,q)\in H^1_0(\Omega^\eps)^2\times L^2_0(\Omega^\eps)
$$
with
\begin{equation}\label{cform}
c\left( (u, p), (v, q) \right) := \int_{\Omega^\varepsilon}
\nabla u : \nabla v - \int_{\Omega^\varepsilon} p \Div v
- \int_{\Omega^\varepsilon} q \Div u\,.
\end{equation}

We shall also need the broken Sobolev spaces of the type $H^1(\mathcal{T}_H)=\{{u}\in L^2(\Omega): {u}|_{T}\in H^{1}(T)^2 \text{ for any }T\in \mathcal{T}_{H}$. To make notations simpler, the integrals over $\Omega$ or $\Omega^\eps$ involving such functions will be implicitly split into the sums of the integrals over the mesh cells
$$
\int_{\Omega}\bullet =\sum_{T\in \mathcal{T}_{H}}\int_{T}\bullet
\qquad\text{and}\qquad
\int_{\Omega^{\varepsilon }}\bullet =\sum_{T\in \mathcal{T}_{H}}\int_{\Omega
^{\varepsilon }\cap T}\bullet \,.
$$
The same convention shall be implicitly assumed in the notation of the $H^1$ semi-norm, i.e. we define $|u|_{H^1(\Omega)}=\left(\sum_{T\in\mathcal{T}_H}|u|_{H^1(\Omega)^2}\right)^{1/2}$ for a function $u$ in the broken $H^1$ space. 

The idea of the Multiscale Finite Element Method (MsFEM)
\emph{\`{a} la} Crouzeix-Raviart is to require the continuity of the finite
element functions, which here are highly oscillatory, in the sense of some
weighted averages on the edges. We have adapted this approach to the Stokes equation in~\cite{StokesPart1} 
using the simplest possible set of weights on the edges. We are now going to recall the main ideas of
this construction and to generalize it to arbitrary weighting functions.

\subsection{Functional spaces}
Let us fix a positive integer $s$ and associate some vector-valued functions $\omega_{E,1},\ldots,\omega_{E,s}:E\to\R^2$
to any edge $E\in\mathcal{E}_H$.  As in \cite{StokesPart1}, we first introduce the extended velocity space 
\begin{equation*}
V_H^{ext}:=\left\{
\begin{array}{l}
{u}\in L^2(\Omega)^2 \text{ such that }{u}|_{T}\in
H^{1}(T)^2 \text{ for any }T\in \mathcal{T}_{H},
\\ {u}=0\text{ on } B^{\varepsilon }, \text{ and }
\int_{E}[[u]]\cdot\omega_{E,j}=0\text{ for all }E\in \mathcal{E}_{H}, \ j=1,\ldots,s
\end{array}
\right\} ,
\end{equation*}
where $[[u]]$ denotes the jump of $u$ across an internal edge and $[[u]] = u$ on the boundary
$\partial\Omega$. The idea behind this space is to enhance the natural velocity space $H^1_0 (\Omega^\eps)^2$
so that we have at our disposal the vector fields discontinuous across the edges of
the mesh. Indeed, our aim is to construct a nonconforming approximation method, where
the continuity of the solution on the mesh edges will be preserved only for the weighted averages. We shall need some technical requirements on the weights:

\begin{assumption}\label{AssWeights} 
For any $E\in\mathcal{E}_H$,  $\vspan (\omega_{E,1}\ldots,\omega_{E,s}) \supset n_E$, the unit normal to $E$.
\end{assumption}

Note that the original construction from \cite{StokesPart1} is recovered by setting the weights as $s=2$, 
$\omega_{E,1}=e_1=\left(\begin{array}{c}  1\\  0\end{array}\right)$, 
$\omega_{E,2}=e_2=\left(\begin{array}{c}  0\\  1\end{array}\right)$ on all the edges. Assumption \ref{AssWeights} is then trivially verified. This will be also the case for another choice of the weights introduced later in this article.

The following assumptions deal not only with the weights but also with the manner in which the holes $B_\eps$ intersect the mesh cells.
\begin{assumption}\label{AssWeights2} 
Take any $T\in\mathcal{T}_H$ and any real numbers $c^E_1,\ldots,c^E_s$ on all the edges $E$ composing $\partial T$. There exists $v\in H^1(T)^2$ vanishing on $T\cap B^\eps$ and such that $\int_E v\cdot\omega_{E,i}=c^E_i$,  $i=1,\ldots,s$ for all the edges $E$.  
\end{assumption}
\begin{assumption}\label{AssWeights3} 
For any $T\in\TH$, let $C_1,\ldots,C_n$ be the connected components of $T\cap\Omega^\eps$  and choose any real numbers $c_1,\ldots,c_n$ with $\sum_{i=1}^nc_i=0$. There exists $w\in H^1(T)^2$ vanishing on $T\cap B^\eps$ and such that $\int_{\partial C_i} w\cdot n=c_i$, $i=1,\ldots,n$ and $\int_F w\cdot\omega_{F,j}=0$ for all the edges $F$ of $T$ and $j=1,\ldots,s$.  
\end{assumption}

\begin{remark}
 Assumption \ref{AssWeights2} above will be valid provided the weights $\omega_{E,1},\ldots,\omega_{E,s}$ are linearly independent, and $E$ is not covered completely by $B^\eps$. Note that the situations where some edges $E$ are  covered  by $B^\eps$ can be easily handled by a slight modification of the fourth-coming MsFEM  method, cf. Lemma \ref{PhiGen}): one should simply ignore such edges when constructing the MsFEM basis functions. 
 
Assumption \ref{AssWeights3} on the other hand may impose some restrictions on the choice of the mesh with respect to the perforations. However, it will be satisfied in most typical situations. First of all, we emphasize that this Assumption is void if $T\cap\Omega^\eps$ is connected (one puts then $w=0$). Moreover, the required function $w$ can be easily constructed if, for example, a mesh element $T$ is split by $B^\eps$ into two connected components $C_1$, $C_2$ and one of its edges, say $E$, is split into two non-empty connected components, say $E_{C_1}$, $E_{C_2}$: one can prescribe then proper non-zero averages of normal fluxes of $w$ on $E_{C_1}$ and $E_{C_2}$  while letting them equal to 0 on all the other edges composing the boundary of $C_1$ and $C_2$. Similar constructions can be imagined in other more complicated situations.  
\end{remark}

We introduce now the combined velocity-pressure space~$X_H^{ext} = V_H^{ext} \times M$,
with~$M = L^2_0(\Omega^\varepsilon) := \left\{ p\in L^2(\Omega^\varepsilon) \text{ s.t.} \int_{\Omega^\varepsilon} p = 0 \right\}$.
The space $X_H^{ext}$ is then decomposed into coarse and fine components:
\begin{equation}\label{XHdirect}
 X_H^{ext} = X_H \oplus^{\perp_c} X_H^0
\end{equation}
where $X_H^0 = V_H^0 \times M_H^0$ is the space of unresolved fine scales with
\begin{align*}     
    V_H^0 &:= \left\{u\in V_H^{ext} : \int_E u\cdot\omega_{E,j} = 0\quad \forall E\in\mathcal E_H,\ j=1,\ldots,s \right\} \\
    M_H^0 &:=\left\{p\in M : \int_{T\cap\Omega^\eps} p = 0\quad \forall T\in\mathcal T\right\}
\end{align*}
and $X_H$ is chosen as the orthogonal complement of $X_H^0$ with respect to~$c$,
the natural bilinear form (\ref{cform}) associated to the Stokes problem. The space $X_H$ in (\ref{XHdirect}) will be referred to as the Crouzeix-Raviart MsFEM space and used to construct an approximation method.    

The basis function of $X_H$ can be constructed in a localized manner, i.e. their supports cover a small number of mesh cells, as in the standard finite element shape functions. We summarize this construction in the following 
\begin{lemma}\label{PhiGen}
Define the functional spaces $M_{H}\subset M$ and $V_{H}\subset V_{H}^{ext}$ as
\begin{align}
\label{MHdef}
M_{H}&=\{{q}\in L^2_{0}(\Omega )\,\text{\ such that }\,q|_{T}=const,~%
\forall T\in \mathcal{T}_{H}\}
\\
\label{VHdef}
V_{H}&=\vspan\{{\Phi}_{E,i},~E\in \mathcal{E}_{H}^{int},~i=1,\ldots,s\}.
\end{align}
where 	${\Phi}_{E,i}$ for any $E\in \mathcal{E}_{H}^{int}$, $i=1,\ldots,s$ is the vector-valued function on $\Omega$, vanishing outside 
the two mesh cells $T_{1},T_{2}$ adjacent to $E$, and defined on these two cells together with the accompanying pressure $\pi
_{E,i}$  as the solution to the following problems: for $k=1,2$, find  ${ \Phi}_{E,i}\in H^1(T_k)^2$ s.t. ${ \Phi}_{E,i}=0$ on $T_k\cap B^{\varepsilon}$,
 $\pi_{E,i}\in L^2_0(T_k\cap \Omega^\varepsilon)$, and the Lagrange multipliers ${\lambda}_{F,j}\in\mathbb{R}$ for all~$F\in\mathcal{E}(T_k)$, $j=1,\ldots,s$ such that
\begin{multline}
\int_{T_k\cap\Omega^{\varepsilon }}\nabla { \Phi}_{E,i}:\nabla {v}
-\int_{T_k\cap\Omega^{\varepsilon }}\pi_{E,i}\mathop{\rm div}\nolimits {v}
-\int_{T_k\cap\Omega^{\varepsilon }}q\Div \Phi_{E,i}
\label{FBaseWeak} \\
+\sum_{j=1}^s\sum_{F\in \mathcal{E}(T_k)}\left[{\lambda}_{F,j} \int_{F}v\cdot\omega_{F,j}
+{\mu}_{F,j} \int_{F} \Phi_{E,i}\cdot\omega_{F,j}\right]
=
\mu_{E,i}
\end{multline}
for all ${v}\in H^{1}(T_k\cap\Omega^{\varepsilon})$ s.t.~$ v|_{T_k\cap B^\varepsilon} = 0$,
$q\in L_{0}^{2}(T_k\cap\Omega^{\varepsilon })$, ${\mu}_{F,j}\in \mathbb{R}$ for all $F\in \mathcal{E}(T_k)$, $j=1,\ldots,s)$. 

Under Assumptions \ref{AssWeights}--\ref{AssWeights3} the problems above are well posed and the MsFEM space $X_H$ from (\ref{XHdirect}) can be identified with
\begin{equation}
X_{H}=\vspan\{({u}_{H},\pi _{H}({u}_{H})+\bar{p}_{H}),~{u}_{H}\in V_{H},~\bar{p}_{H}\in M_{H}\}  \label{XHdef1}
\end{equation}%
where $\pi_H:V_H\to \vspan\{{\pi}_{E,i},~E\in \mathcal{E}_{H}^{int},~i=1,2\}\subset M_H^0$ is the linear mapping such that $\pi_H({\Phi}_{E,i})={\pi}_{E,i}$ for all $E\in \mathcal{E}_{H}^{int}$, $i=1,2$.
\end{lemma}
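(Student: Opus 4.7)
The proof splits into well-posedness of the local mixed problems (\ref{FBaseWeak}) on each $T_k\cap\Omega^\eps$ and the identification of the resulting span (\ref{XHdef1}) with the $c$-orthogonal complement of $X_H^0$ that defines $X_H$ in (\ref{XHdirect}). The most delicate ingredient is the inf-sup condition of the first part when $T\cap\Omega^\eps$ has several connected components, since a weighted average over a full edge does not control the normal flux on each component separately, and Assumption \ref{AssWeights3} is precisely what is tailored to fix this.

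For well-posedness I would read (\ref{FBaseWeak}) as a saddle-point problem with two stacked constraints — divergence paired with $\pi$, and the weighted edge averages paired with the multipliers $\lambda_{F,j}$ — and apply Brezzi's theory. Coercivity of $\int\nabla\Phi:\nabla v$ on the joint kernel follows from a Poincaré inequality on each connected component of $T_k\cap\Omega^\eps$: either $v$ vanishes on a nontrivial piece of $B^\eps$ touching that component, or the zero weighted averages together with Assumption \ref{AssWeights} rule out non-zero constants and give a Poincaré--Wirtinger estimate on the quotient. For the combined inf-sup, given data $(q,\mu)\in L^2_0(T_k\cap\Omega^\eps)\times\mathbb R^{s|\mathcal E(T_k)|}$, I would build the test velocity in three stages: first, Assumption \ref{AssWeights2} supplies $v_1$ realizing $\int_F v_1\cdot\omega_{F,j}=\mu_{F,j}$ on every edge; second, Assumption \ref{AssWeights3} supplies a correction $w$ with zero weighted edge averages and per-component fluxes $\int_{\partial C_i}w\cdot n=\int_{C_i}(q-\Div v_1)$, whose sum over $i$ vanishes since $q$ has zero mean on $T_k\cap\Omega^\eps$ and the boundary fluxes of $v_1$ integrate divergence over the same set; third, on each component $C_i$ the standard Bogovskii construction yields $v_2\in H^1_0(C_i)^2$ with $\Div v_2=q-\Div(v_1+w)$. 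Taking $v=v_1+w+v_2$ realizes both constraint pairings simultaneously, and summing $H^1$ bounds from each step delivers the inf-sup estimate.

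For the identification I would check three points. (i) The patched functions $\Phi_{E,i}$ belong to $V_H^{ext}$: across $E$, the two local problems on $T_1$ and $T_2$ are driven by the same right-hand side $\mu_{E,i}$, so the weighted average on $E$ agrees from both sides and the jump has zero weighted averages, while on all other edges the zero weighted-average condition is imposed directly by (\ref{FBaseWeak}). (ii) Any element $(u_H,\pi_H(u_H)+\bar p_H)$ with $u_H\in V_H$ and $\bar p_H\in M_H$ is $c$-orthogonal to every $(u_0,p_0)\in X_H^0$: the $\bar p_H\Div u_0$ contribution reduces cell-by-cell to $\sum_T\bar p_H|_T\sum_{F\subset\partial T}\int_F u_0\cdot n$, which vanishes because $u_0\in V_H^0$ has zero weighted averages and Assumption \ref{AssWeights} includes the normal in the span; the remaining terms are exactly (\ref{FBaseWeak}) tested with $(v,q,\mu)=(u_0,p_0,0)$. (iii) Finally, $X_H$ and $X_H^0$ together exhaust $X_H^{ext}$: given $(u,p)\in X_H^{ext}$, I would choose the $\Phi_{E,i}$-coefficients of $u_H$ equal to the weighted averages $\int_E u\cdot\omega_{E,i}$ on the internal edges and $\bar p_H$ equal to the cellwise averages of $p$; the residual $(u-u_H,p-\pi_H(u_H)-\bar p_H)$ then lies in $V_H^0\times M_H^0$ by construction, while linear independence of the $\Phi_{E,i}$ follows from the same set of degrees of freedom.
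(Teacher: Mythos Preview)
Your overall strategy matches the paper's: well-posedness of (\ref{FBaseWeak}) via Brezzi theory using Assumptions \ref{AssWeights2}--\ref{AssWeights3}, and identification of (\ref{XHdef1}) by checking $c$-orthogonality to $X_H^0$ together with exhaustion of $X_H^{ext}$. Parts (i)--(iii) of your identification argument are correct and are precisely what the paper defers to \cite{StokesPart1}, including the use of Assumption~\ref{AssWeights} to kill the $\bar p_H\Div u_0$ term.

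There is, however, a concrete slip in your combined inf-sup construction. You assert that the per-component data $c_i=\int_{C_i}(q-\Div v_1)$ sum to zero, but
\[
\sum_i c_i \;=\; \int_{T_k\cap\Omega^\eps} q \;-\; \int_{T_k\cap\Omega^\eps}\Div v_1 \;=\; 0 \;-\; \int_{\partial T_k} v_1\cdot n,
\]
and this last boundary flux is dictated by the edge data $\mu$ (through Assumption~\ref{AssWeights}, since $n_F$ lies in the span of the $\omega_{F,j}$); it has no reason to vanish for generic $\mu$. Hence Assumption~\ref{AssWeights3} cannot be invoked as written, and the subsequent Bogovskii step would ask you to solve $\Div v_2=q-\Div(v_1+w)$ with nonzero mean. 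The repair is easy once you recall that the divergence is only paired with $q\in L^2_0(T_k\cap\Omega^\eps)$: you need not achieve $\Div v=q$ but only $\Div v=q+\text{const}$, and taking that constant equal to $|T_k\cap\Omega^\eps|^{-1}\int_{\partial T_k}v_1\cdot n$ restores $\sum_i c_i=0$. The paper sidesteps this pitfall by nesting rather than stacking the two constraints: it first lifts the edge data via Assumption~\ref{AssWeights2} and then proves the pressure inf-sup (\ref{infsupEl}) on the subspace where all edge averages already vanish, so that $\int_{\partial T}v\cdot n=0$ holds automatically and $\sum_i\int_{C_i}q=0$ follows directly from $q\in L^2_0$.
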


\begin{remark}
 In the strong form, problem (\ref{FBaseWeak}) can be rewritten as: find ${\Phi}_{E,i}$ and ${\pi}_{E,i}$ that  solve on $T_k$, $k=1,2$
\begin{align*}
-\Delta {\Phi}_{E,i}+\nabla \pi _{E,i} &=0,& &\text{ on }\Omega
^{\varepsilon }\cap T_{k}, \\
\Div {\Phi}_{E,i} &=const, & &\text{ on }\Omega ^{\varepsilon }\cap
T_{k}, \\
{\Phi}_{E,i} &=0, & &\text{ on }B^{\varepsilon }\cap T_{k}, \\
\nabla {\Phi}_{E,i}n-\pi _{E,i}n &\in \vspan\{\omega_{F,1},\ldots,\omega_{F,s}\} & & \text{ on }F\cap \Omega
^{\varepsilon },\quad\text{ for all }F\in \mathcal{E}(T_{k}), \\
\int_{F}{\Phi}_{E,i}\cdot\omega_{F,j} &=\left\{ 
\begin{array}{c}
\delta_{ij},~F=E \\ 
0,~F\not=E%
\end{array}%
\right. & &\text{ for all }F\in \mathcal{E}(T_{k}), j=1,\ldots,s\\
\int_{\Omega ^{\varepsilon }\cap T_{k}}\pi _{E,i} &=0. & &\quad
\end{align*}
\end{remark}

\begin{proof}
The well-posedness of problem (\ref{FBaseWeak}) on any mesh element $T_k$ (denoted simply by $T$ in the sequel of this proof) follows from Assumption \ref{AssWeights2}, which ensures that one can prescribe the needed values to $\int_{F}{\Phi}_{E,i}\cdot\omega_{F,j}$ on all the edges $F$ of element $T_k$, and from the following inf-sup condition
\begin{equation}\label{infsupEl}
\inf_{q\in L^2_{0}(T\cap \Omega ^{\varepsilon })}\sup_{{v}\in V_{\int
0}(T)}\frac{\int_{T\cap \Omega ^{\varepsilon }}q\Div{v}}{\Vert
q\Vert _{L^2 (T)}|{v}|_{H^{1}(T)}}>0
\end{equation}
with
\begin{equation*}
V_{\int 0}(T)=\{{v}:H^{1}(T)^2:\int_{E}{v}=0,~\forall E\in 
\mathcal{E}(T)\text{ and }{v}=0\text{ on }B^{\varepsilon }\cap T\}.
\end{equation*}
In turn, property (\ref{infsupEl}) can be established thanks to Assumption \ref{AssWeights3}. Indeed, this property is evident if $T\cap \Omega ^{\varepsilon }$ is connected: given $q\in L^2_{0}(T\cap \Omega ^{\varepsilon })$ one takes then $v\in H^1_0(T\cap \Omega ^{\varepsilon })\subset V_{\int 0}(T)$ (assuming that $v$ is extended by zero on $B^\eps$) such that $\Div v=q$ and the $H^1$ norm of $v$ is bounded by the $L^2$ norm of $q$ (the existence of such a function is assured by \cite[Corollary 2.4, p.24]{GiraultRaviart}). If not, recall the connected components $C_1,\ldots,C_n$ of $T\cap\Omega^\eps$, denote $c_i=\int_{C_i}q$ for a given $q\in L^2_{0}(T\cap \Omega ^{\varepsilon })$, observe $\sum_{i=1}^nc_i=0$ and consider the function $w$ from Assumption \ref{AssWeights3}. We have $w\in V_{\int 0}(T)$ and
$$
\int_{C_i}\Div w=\int_{\partial C_i}w\cdot n=c_i=\int_{C_i}q
$$
One can now choose $w^{(i)}\in H^1_0(C_i)$ on each component $C_i$ such that $\Div w^{(i)}=q-\Div w$ on $C_i$. Such functions exist thanks to the above mentioned result from \cite{GiraultRaviart} since $\int_{C_i}(q-\Div w)=0$.  Setting $v=w+w^{(i)}$ on $C_i$ and $v=0$ on $B^\eps$ gives $v\in V_{\int 0}(T)$ such that $\Div v=q$. By construction, the $H^1$ norm of $v$ is bounded by the $L^2$ norm of $q$.

To prove the other statements of the proposition one can easily adapt the proofs of Lemmas 3.1, 3.2 and Remark 3.3 in \cite{StokesPart1} with obvious modifications induced by the more general constraints $\int_{E}[[u]]\cdot \omega_{E,j}=0$, replacing $\int_{E}[[u]]=0$ in the definition of $V_{H}^{ext}$. We shall not go into the details of these modifications for the sake of brevity. We emphasize only that Assumption \ref{AssWeights} is indeed necessary to conclude. For example, the proof that $\int_{\Omega ^{\varepsilon }}\bar{p}_{H}\Div{v}=0$ for any $\bar{p}_H\in M_H$ and ${v}\in V_{H}^{0}$, cf. Lemma 3.1 in \cite{StokesPart1},  goes like this
\[
\int_{\Omega ^{\varepsilon }}\bar{p}_{H}\Div{v}
=\sum_{T\in\mathcal{T}_{H}}\bar{p}_{H}|_{T}\int_{T}\Div{v}
=\sum_{T\in \mathcal{T}_{H}}\bar{p}_{H}|_{T}\int_{\partial T}{v}%
\cdot n=0 
\]%
The last equality above is justified by $\vspan (\omega_{E,1}\ldots,\omega_{E,s}) \supset n_E$ on any edge $E$ of $T$.
\end{proof}

From now on, we can think of $V_H$ as the finite dimensional space defined by (\ref{VHdef}). From this construction, we see easily that $\Div(V_H)\subset M_H$. Indeed, $\Div(v_H)$ is piecewise constant on $\TH$ for any $v_H \in V_H$ and
$$
\int_{\Omega^\eps}\Div v_H=\sum_{T\in\TH}\int_T \Div v_H=\sum_{E\in\mathcal{E}_H}\int_E [[v_H\cdot n_E]]=0 .
$$
In fact, $\Div(V_H)=M_H$ as will be shown in Lemma \ref{infsupD}.

\subsection{The MsFEM approximation}
The approximation of the solution to the Stokes problem (\ref{genP})--(\ref{genB}) now reads: find $
{u}_{H}\in \VH{}$ and $p_{H}\in M_{H}$ such that
\begin{alignat}{2}
\int_{\Omega ^{\varepsilon }}\nabla {u}_{H}:\nabla {v}
_{H}-\int_{\Omega ^{\varepsilon }}p_{H}\Div{v}_{H} &=\int_{\Omega
^{\varepsilon }}{f}\cdot {v}_{H}&\quad& \forall {v}_{H}\in \VH{}\,,
\label{MsFEMu} \\
\int_{\Omega ^{\varepsilon }}q_{H}\Div{u}_{H} &=0&& \forall
q_{H}\in M_{H}\,,  \label{MsFEMp}
\end{alignat}

Existence and uniqueness of the solution to \eqref{MsFEMu}--\eqref{MsFEMp}
follows from the standard theory of saddle-point problems provided the pair of spaces $V_H\times M_H$ satisfies the inf-sup property. This is indeed the case, as shown in the next lemma.
\begin{lemma}\label{infsupD}
Assume that the continuous velocity-pressure inf-sup property holds on $\Omega^\eps$ with a constant $\beta>0$, i.e.
$$
\inf_{p\in L^2_0(\Omega^\eps)}\sup_{v\in H^1_0{\Omega^\eps}^2}\frac{\int_{\Omega ^{\varepsilon
}}p\Div{v}}{\Vert p\Vert_{L^2}|{v}|_{H^{1}}}\geq \beta
$$
Then, the discrete inf-sup property holds on $V_H\times M_H$ with the same constant $\beta >0$:
\begin{equation*}
\inf_{p_{H}\in M_{H}}\sup_{v_{H}\in \VH{}}\frac{\int_{\Omega ^{\varepsilon
}}p_{H}\Div{v}_{H}}{\Vert p_{H}\Vert_{L^2}|{v}_{H}|_{H^{1}}}\geq \beta\,.
\end{equation*}
More precisely, for any $p_H\in M_H$ there exists $v_H\in\VH$ such that
$$  
\Div v_H=p_H \text{ on }T\cap\Omega^\eps, \ \forall T\in\TH
  \quad\text{and}\quad
  {|v_H |_{H^1(\Omega)}} \le \frac{1}{\beta} \|p_H\|_{L^2(\Omega^\eps)} \,. 
$$
\end{lemma}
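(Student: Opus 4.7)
The plan is a Fortin-type construction that is sharp enough to transfer the constant $\beta$ itself from the continuous to the discrete setting. Given $p_H \in M_H$, first invoke the continuous inf-sup on $\Omega^\varepsilon$ to obtain $v \in H^1_0(\Omega^\varepsilon)^2$ satisfying $\Div v = p_H$ on $\Omega^\varepsilon$ and $|v|_{H^1(\Omega^\varepsilon)} \le \beta^{-1} \|p_H\|_{L^2}$; since $p_H|_T$ is a constant, so is $\Div v$ on $T \cap \Omega^\varepsilon$. Extending $v$ by zero on $B^\varepsilon$, set
\[
v_H := \sum_{E \in \mathcal{E}_H^{int}} \sum_{j=1}^s \Bigl(\textstyle\int_E v \cdot \omega_{E,j}\Bigr)\,\Phi_{E,j} \;\in\; V_H,
\]
which by construction matches the edge moments of $v$ on every interior edge (and vanishes on boundary edges, where $v$ does too).

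The divergence identity follows from Green's formula on each $T$: $\Div v_H$ is already constant on $T \cap \Omega^\varepsilon$ by construction of the basis, and
\[
|T\cap\Omega^\varepsilon|\, (\Div v_H)|_T \;=\; \sum_{E\in\mathcal{E}(T)} \int_E v_H\cdot n_E \;=\; \sum_{E\in\mathcal{E}(T)} \int_E v\cdot n_E \;=\; p_H|_T\,|T\cap\Omega^\varepsilon|,
\]
where the middle equality uses Assumption \ref{AssWeights} ($n_E \in \vspan(\omega_{E,1},\ldots,\omega_{E,s})$) to rewrite $\int_E v_H \cdot n_E$ as a linear combination of the preserved edge moments, and likewise for $v$; the rightmost equality is just $\int_{T\cap\Omega^\varepsilon}\Div v = \int_{T\cap\Omega^\varepsilon} p_H$.

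For the norm bound, observe that the saddle-point system (\ref{FBaseWeak}) is exactly the optimality condition for the quadratic minimization of $\frac{1}{2}|w|_{H^1(T)}^2$ over $w \in H^1(T)^2$ with $w = 0$ on $T \cap B^\varepsilon$, $\Div w$ constant on $T \cap \Omega^\varepsilon$, and prescribed edge moments $\int_F w \cdot \omega_{F,j}$, the multipliers $\pi_{E,i}$ and $\lambda_{F,j}$ enforcing the last two sets of constraints. By linearity, $v_H|_T$ is the unique minimizer on the admissible set associated to its own edge moments and its own divergence $p_H|_T$; since $v|_T$ belongs to this admissible set, $|v_H|_{H^1(T)} \le |v|_{H^1(T)}$, and summing over $T$ gives $|v_H|_{H^1(\Omega)} \le |v|_{H^1(\Omega^\varepsilon)} \le \beta^{-1}\|p_H\|_{L^2}$.

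The delicate point is the variational identification in the last paragraph: this is what transfers the \emph{sharp} constant $\beta^{-1}$ to the discrete estimate rather than a generic Fortin-operator norm. Its validity rests on the well-posedness of the local saddle-point problem, that is, on Assumptions \ref{AssWeights}--\ref{AssWeights3} already exploited in Lemma \ref{PhiGen}.
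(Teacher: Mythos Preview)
Your proof is correct and is essentially the paper's argument: the paper writes the same projection abstractly as the decomposition $v=v_H+v_H^0$ coming from $V_H^{ext}=V_H\oplus V_H^0$, derives $\Div v_H=p_H$ from the matching normal-flux moments exactly as you do, and then gets $|v_H|_{H^1}\le|v|_{H^1}$ from the global $c$-orthogonality between $X_H$ and $X_H^0$ (using that $\Div(v-v_H)=0$ has just been shown). Your explicit interpolant $\sum_{E,j}\bigl(\int_E v\cdot\omega_{E,j}\bigr)\Phi_{E,j}$ is that same $v_H$, and your local energy-minimization reading of~(\ref{FBaseWeak}) is precisely the elementwise version of that orthogonality, so the two arguments coincide.
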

\begin{proof}
Take arbitrary $p_{H}\in M_{H}$ and $v\in H^1_0(\Omega^\eps)$ such that 
$$  
\Div v=p_H \text{ on }\Omega^\eps  \quad\text{and}\quad
  {|v|_{H^1(\Omega)}} \le \frac{1}{\beta} \|p_H\|_{L^2(\Omega^\eps)} \,. 
$$
Decompose $v=v_H+v_H^0$ with $v_H\in V_H$ and $v_H\in V_H^0$. It implies $\int_E v\cdot n_E=\int_E v_H\cdot n_E$ on any $E\in\mathcal{E}_H$ so that for any $T\in\TH$
\begin{equation*}
\int_{T\cap\Omega^\eps} \Div v_H = \int_{\partial T} v_H\cdot n = \int_{\partial T}  v\cdot n = \int_{T\cap\Omega^\eps} \Div v =
\int_{T\cap\Omega^\eps} p_H.
\end{equation*}
Since, both $\Div v_H$ and $p_H$ are piecewise constant on $\TH$, we conclude  $\Div v_H=p_H$.

Moreover, $\int_{\Omega^\eps} \nabla(v-v_H):w_H=0$ for any $w_H\in V_H$ by the construction of $V_H$, cf. the orthogonality between $X_H$ and $X_H^0$. Hence $|{v}_{H}|_{H^{1}}\le|{v}|_{H^{1}}$ which proves the Lemma.
\end{proof}

In fact, the velocity $u_H$ given by (\ref{MsFEMu})--(\ref{MsFEMp}) can be characterized in a simpler manner: find ${u}_{H}\in \ZH$ such that
\begin{equation}
\int_{\Omega ^{\varepsilon }}\nabla {u}_{H}:\nabla {v}
_{H}=\int_{\Omega ^{\varepsilon }}{f}\cdot {v}_{H},\quad \forall
{v}_{H}\in \ZH
\label{MsFEMz}
\end{equation}
where $Z_H$ is the divergence free subspace of $V_H$:
\begin{equation}\label{ZHdef}
\ZH =\{{u_H}\in \VH\text{ such that }\Div {u_H}=0\text{
on any }T\in \mathcal{T}_{H}\}\,.
\end{equation}
This fact will be useful in the proof of the error estimate.

\begin{remark}\label{BCg}
Our method can be easily adapted to non-homogeneous boundary conditions on the outer boundary $\partial\Omega$, i.e. when (\ref{genB}) is replaced with 
\begin{equation}
u= g\text{ on }\partial\Omega \quad\text{and}\quad u= 0\text{ on }\partial B^\eps \,.
\label{genBg}
\end{equation}
One should then add the following equations on all the mesh edges $E$ lying on $\partial\Omega$:
$$
\int_E u_H \cdot \omega_{E,j} = \int_E g \cdot \omega_{E,j},\quad j=1,\ldots,s.
$$
\end{remark}

\subsection{Possible choices of weighting functions}
We now consider two choices of weighting functions, leading to 2 variants of multiscale spaces:
\begin{align}
\label{CR2}
\text{CR}_2 & :  s=2, \ \omega_{E,1}=e_1, \ \omega_{E,2}=e_2, \\
\label{CR3}
\text{CR}_3 & :  s=3, \ \omega_{E,1}=e_1, \ \omega_{E,2}=e_2, \ \omega_{E,3}=n_E\psi_E, 
\end{align}
for any $E\in \mathcal{E}_{H}$. Here $n_E$ denote again a unit vector normal to $E$ and $\psi_{E}$
a linear polynomial on $E$ such that $\int_{E}\psi_{E}=0$. The actual choice of $n_{E}$ and $\psi_{E}$ should be made once for all, but is arbitrary otherwise.

We recognize the space CR$_2$ as being the MsFEM space from the Part I of the present series \cite{StokesPart1} where it was successfully tested numerically. It can however be quite inefficient in certain situations, especially when some of the mesh cells contain a lot of densely packed holes. Consider, for example, a geometrical configuration as in Fig. \ref{fig:base}. We represent there a mesh cell (a square), say $T\in\TH$, which happens to contain 9 round holes. We plot on the left the sum of basis functions $\Phi_{LR}:=\Phi_{L,1}+\Phi_{R,1}$ from the CR$_2$ basis associated to the two vertical sides of $T$, $L$ being the left side and $R$ the right side of $T$ and assuming that the unit normal is chosen in the direction $e_1$ on both edges $L$ and $R$. We thus impose the flow to be (in average) in $e_1$ direction on both vertical sides of $T$ and to vanish (in average again) on both horizontal sides. We consider a sum of basis functions, rather than a basis function alone, since $\Div(\Phi_{LR})=0$ on $T$ as $\int_T\Div\Phi_{LR}=\int_{R}\Phi_{R,1}\cdot e_1-\int_{L,1}\Phi_{L}\cdot e_1=0$, while $\int_T\Div\Phi_{L,1}=const\not=0$ there. The vector field $\Phi_{LR}$ should model, roughly speaking, the flow from left to right inside $T$. However, the actual behavior of $\Phi_{LR}$ is quite different and counter-intuitive: the fluid seems to turn around the corners of the cell $T$, which have of course no physical meaning, and barely penetrates inside $T$ between the obstacles. One concludes thus that the CR$_2$ space $V_H$ cannot be used in general to construct a reasonable approximation of the solution to the Stokes problem.   Turning to the alternative CR$_3$ space, we plot at Fig. \ref{fig:base} on the right the same linear combination $\Phi_{L,1}+\Phi_{R,1}$ of basis functions. We see now that their behavior is at least visually correct.  The superiority of CR$_3$ over CR$_2$ will be further confirmed by other numerical experiments in Section \ref{sec:numres}. Moreover, we shall be able to prove an error estimate for the MsFEM approximation using the CR$_3$ basis functions, cf. Theorem \ref{theo:main} below and its proof in Section~\ref{sec:proof}.

\begin{figure}[h]
    \centering
    \includegraphics[width=0.4\textwidth]{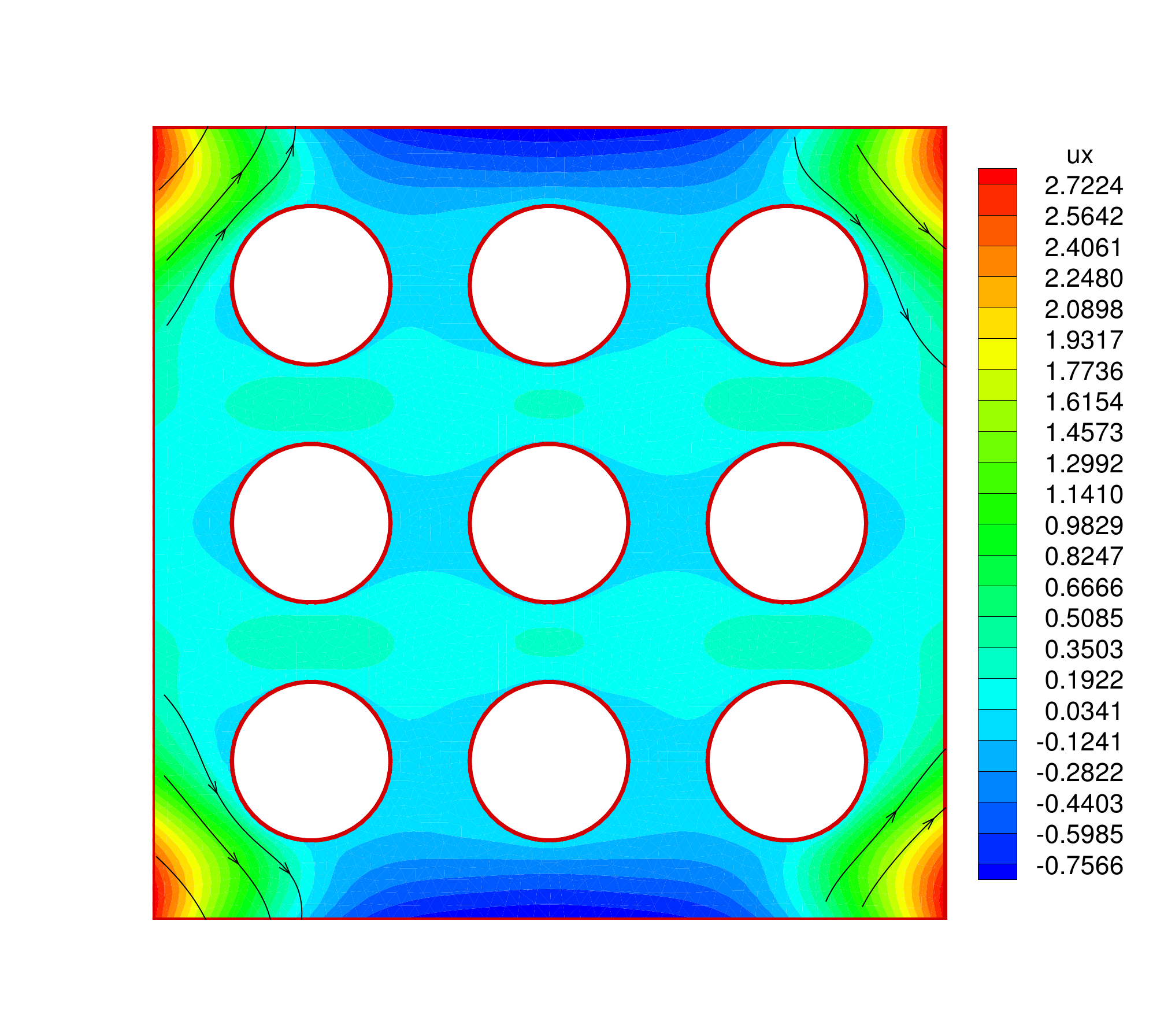}
    \hspace{-3mm}
    \includegraphics[width=0.4\textwidth]{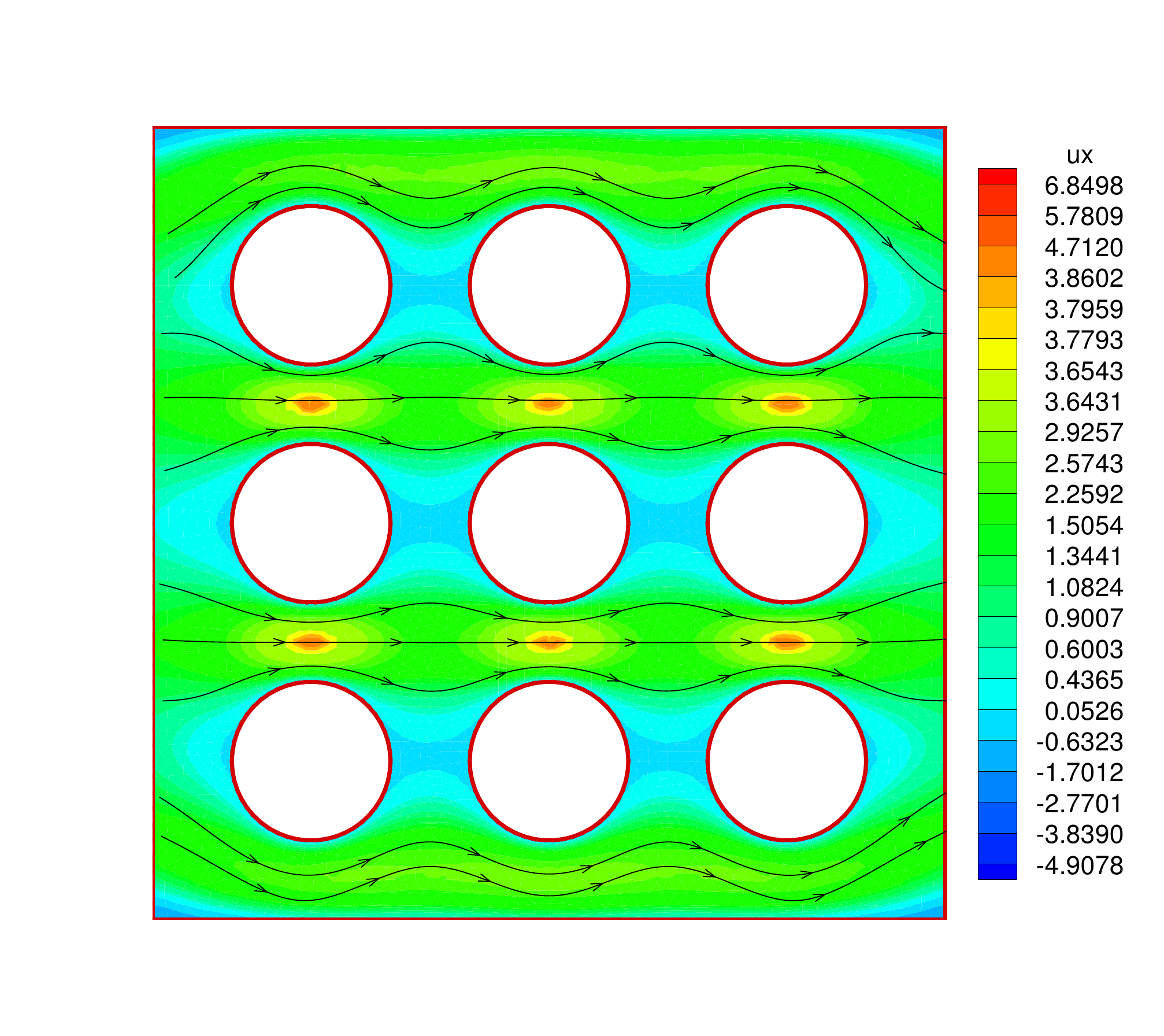}
    \caption{Basis function combination $\Phi_{LR}=\Phi_{L,1}+\Phi_{R,1}$.  Left: $\Phi_{LR}$ computed with CR$_2$ basis functions (\ref{CR2}); Right: $\Phi_{LR}$ computed with CR$_3$ basis functions  (\ref{CR3}). We show the stream-lines associated to these vector-valued functions. The colors represent their $x$-components.}
    \label{fig:base}
\end{figure}

\begin{remark}\label{rembub} 
Following \cite{MsFEMCR2}, one could think that the drawbacks of CR2 basis functions could be fixed if one added appropriate multi-scale bubble functions to the CR2 MsFEM space. One could thus introduce for any $T\in\TH$ the vector-valued velocity bubble $\Psi_{T,i}$ with associated pressure $\theta_{T,i}$, $i=1,2$ with $\Psi_{T,i}$, $\theta_{T,i}$ supported in $T$ and solution to 
\begin{align*}
-\Delta {\Psi}_{T,i}+\nabla \theta _{T,i} &=e_i& &\text{ on }\Omega
^{\varepsilon }\cap T, \\
\Div {\Psi}_{T,i} &=0 & &\text{ on }\Omega ^{\varepsilon }\cap T, \\
{\Psi}_{T,i} &=0 & &\text{ on }B^{\varepsilon }\cap T, \\
\nabla {\Psi}_{T,i}n-\theta _{T,i}n &= const & & \text{ on }F\cap \Omega
^{\varepsilon }\quad\text{ for all }F\in \mathcal{E}(T), \\
\int_{F}{\Psi}_{T,i} &=0 & &\text{ for all }F\in \mathcal{E}(T), \\
\int_{\Omega ^{\varepsilon }\cap T}\theta _{T,i} &=0. & &\quad
\end{align*}
We plot such a function at Fig. \ref{fig:bub} in a setting similar to that of Fig. \ref{fig:base} and observe that it could indeed restore the typical flow features lacking in the CR2 basis functions. However, CR2 MsFEM space, even enhanced with such bubble functions, would perform poorly with respect to the non-conformity error inherent to our method, cf. Remark \ref{remCR3trick}. This is why we have chosen not to consider the bubble functions in the present article, contrary to \cite{MsFEMCR2}.

\begin{figure}[h]
    \centering
    \includegraphics[width=0.4\textwidth]{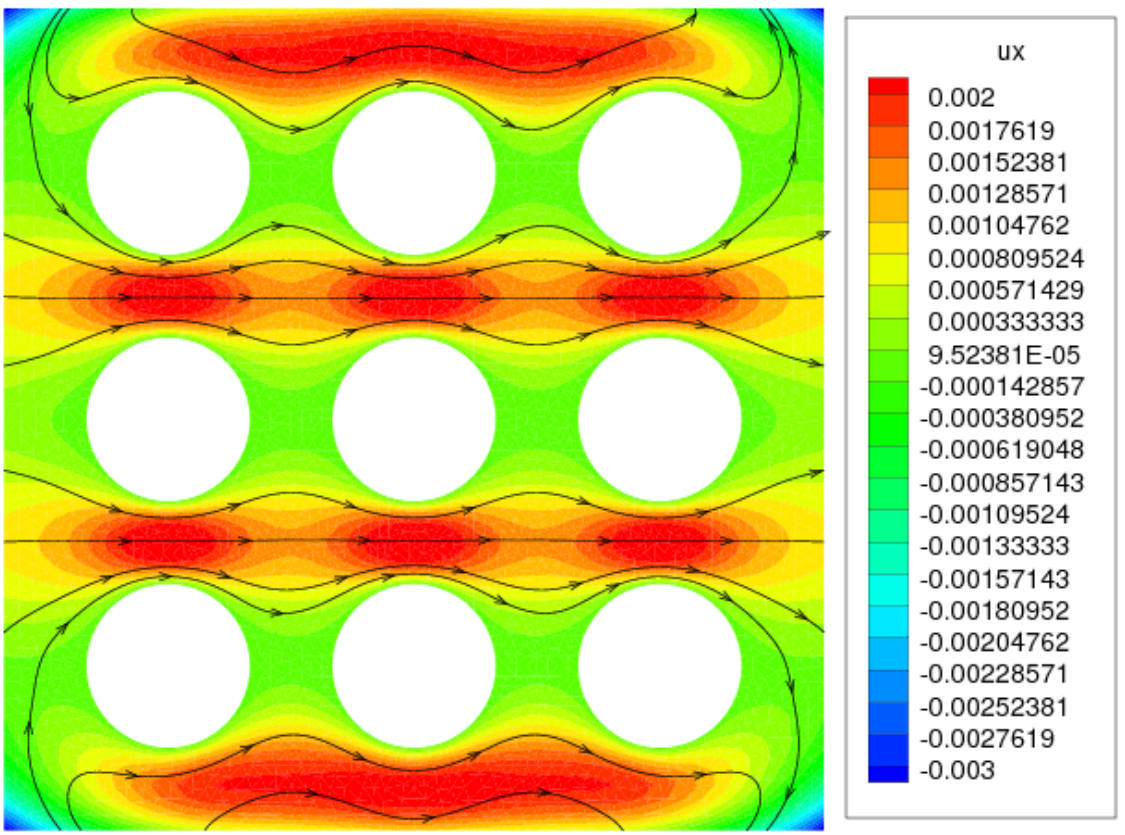}
    \caption{Bubble function $\Psi_{T,1}$ on the same mesh cell $T$ as at Fig. \ref{fig:base}. }
    \label{fig:bub}
\end{figure}
 
\end{remark}

\subsection{Periodic case}
The theoretical study of the MsFEM method introduced above will be performed only in the case of periodic perforations. Moreover, we shall need to be careful about the introduction of perforations near the boundary $\partial\Omega$. We adopt thus the following set of hypotheses. 

\begin{assumption}\label{AssPeriod}
$\Omega\in\mathbb{R}^2$ is a bounded simply connected polygonal domain, $
B^{\varepsilon} $ is a periodic set of holes inside $\Omega$, described below, and $
\Omega^\varepsilon=\Omega\setminus\bar{B^{\varepsilon}}$. 
Consider first the reference cell, the unit square $Y=(0,1)^2$, 
a domain $B\subset Y$ with sufficiently smooth boundary (the obstacle domain), and $\mathcal{F}=Y\setminus B$ 
(the fluid domain). Assume $dist(\partial B,\partial Y)>0$ and $\mathcal{F}$ connected. Take $
\varepsilon>0$ and define for any $i\in\mathbb{Z}^2$: $Y_{i}=
\varepsilon(Y+i)$, $B_{i}=\varepsilon(B+i)$, $\mathcal{F}_{i}=\varepsilon(\mathcal{F}+i)$. Finally, set 
\begin{equation*}
\mathcal{I}=\{i\in\mathbb{Z}^2:Y_{i}\subset\Omega\},\quad
B^{\varepsilon}=\cup_{i\in\mathcal{I}}B_{i} \text{ and }\Omega^{
\varepsilon}=\Omega\setminus\bar{B}^{\varepsilon}.
\end{equation*}
\end{assumption}
These definitions are illustrated in \figref{fig:domain setup}. Note that our definition of the perforated domain is slightly different from that of \cite{Mik96}, where~$\Omega^\varepsilon$ is perforated by all~$B_i$ that are enclosed in
$\Omega$. Here, we only leave the holes $B_i$ contained in a cell~$Y_i$ which
is itself inside~$\Omega$. 

\begin{figure}[h]
    \centering
    \def\svgwidth{0.6\textwidth}
    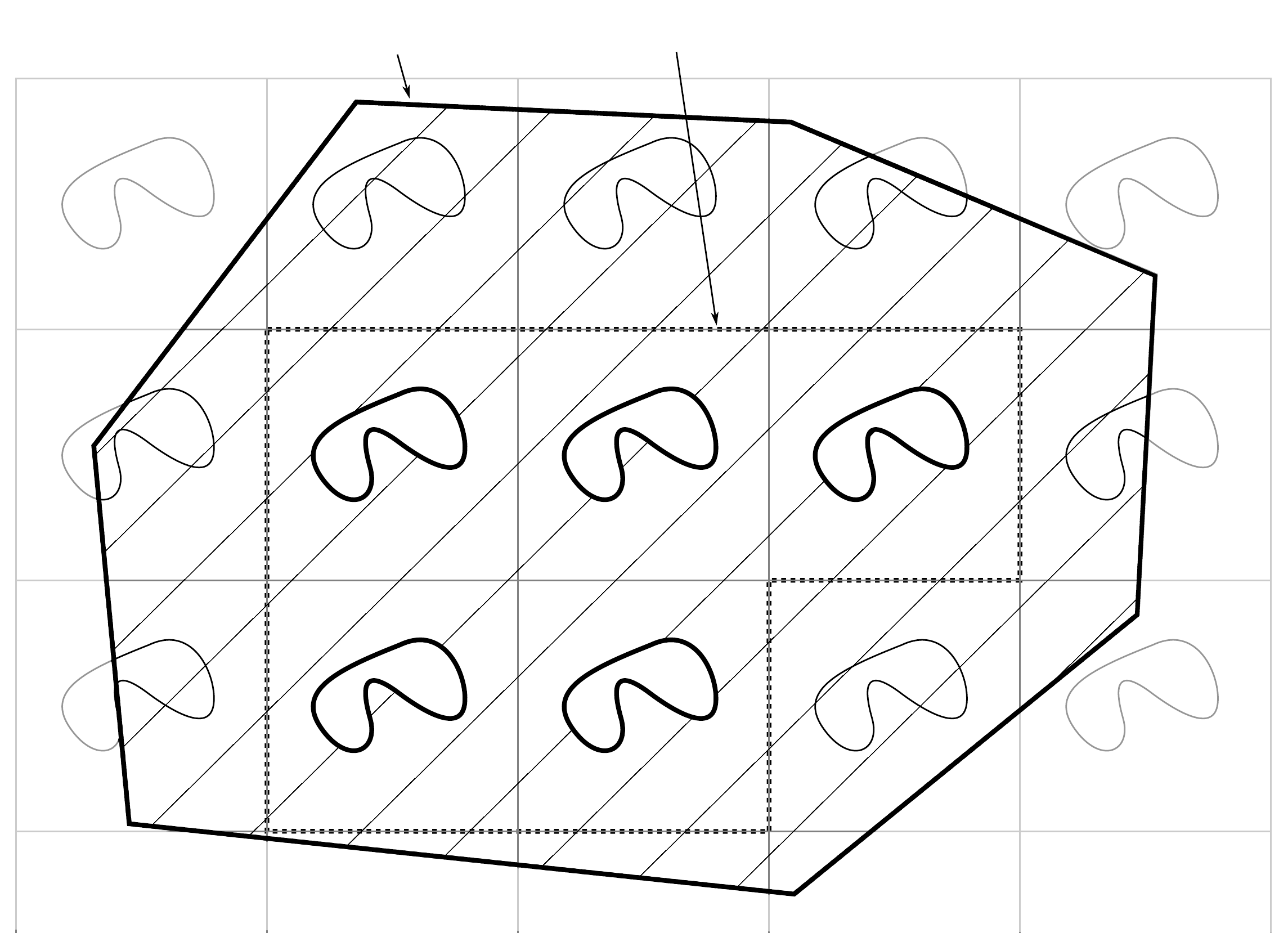
    \caption{Domain setup,~$\Omega^\varepsilon$ is crosshatched and
    its boundary is in bold lines}
    \label{fig:domain setup}
\end{figure}

We can now announce our main result, i.e. the error estimate for the CR$_3$ MsFEM method.

\begin{theorem}
\label{theo:main} Adopt Assumptions \ref{AssPeriod} on the perforated domain $\Omega
^{\varepsilon }$ and \ref{AssWeights2}--\ref{AssWeights3} about the mesh and the weighting functions used to set up the MsFEM method.
Assume moreover that the weighting functions are chosen as in (\ref{CR3}). Suppose also that ${f}$ and the
homogenized pressure $p^{\ast }$, cf. Section \ref{sec:homog}, are sufficiently smooth. 
The following error bound holds between the solution to the Stokes equations (\ref{genP}--\ref{genB}) and its MsFEM approximation \eqref{MsFEMu}--\eqref{MsFEMp} 
\begin{multline}\label{MainErrEst}
|{u}-{u}_{H}|_{H^{1}(\Omega)}+\varepsilon \Vert p-p_{H}\Vert_{L^2(\Omega)}
\\
\leq C\varepsilon \left( H+\sqrt{\varepsilon }+\sqrt{\frac{\varepsilon }{H}}\right)
(\Vert f\Vert_{\espHC}+\Vert p^{\ast }\Vert_{H^{2}(\Omega)})\,,
\end{multline}
where the constant $C$ depends only on the mesh regularity and the
perforation pattern $B$. 
\end{theorem}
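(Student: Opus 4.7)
The plan is to follow a second-Strang-type decomposition adapted to the divergence-free reformulation (\ref{MsFEMz}). Subtracting the exact weak equation from the MsFEM equation tested against $w_H\in\ZH$, performing a cellwise integration by parts on $-\Delta u+\nabla p=f$, and exploiting that $w_H$ vanishes on $B^\eps$ and is piecewise divergence free, I get
$$\int_{\Omega^\eps}\nabla(u-u_H):\nabla w_H=\mathcal{E}_c(u,p;w_H):=\sum_{E\in\mathcal{E}_H^{int}}\int_E\Bigl(\frac{\partial u}{\partial n}-pn\Bigr)\cdot[[w_H]],\qquad\forall w_H\in\ZH.$$
A standard triangle inequality then produces
$|u-u_H|_{H^1(\Omega)}\le C\inf_{v_H\in\ZH}|u-v_H|_{H^1(\Omega)}+C\sup_{w_H\in\ZH}\mathcal{E}_c(u,p;w_H)/|w_H|_{H^1(\Omega)}$. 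The pressure error will be recovered at the very end via the discrete inf-sup of Lemma \ref{infsupD}.

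For the best-approximation term I would build the natural interpolant $I_H u\in\VH$ by prescribing $\int_E I_H u\cdot\omega_{E,j}=\int_E u\cdot\omega_{E,j}$ for every internal edge $E$ and $j=1,2,3$ (with zero averages on $\partial\Omega$). Since $\Div u=0$ on $\Omega^\eps$, the divergence theorem on each cell gives $\int_T\Div(I_H u)=0$, and the piecewise-constant character of $\Div(I_H u)$ inherited from the local Stokes problems of Lemma \ref{PhiGen} yields $I_H u\in\ZH$. To bound $|u-I_H u|_{H^1}$, I would use the two-scale ansatz from Section \ref{sec:homog}, $u\approx\varepsilon^2K(x/\varepsilon)(f-\nabla p^\ast)$: on each cell, $I_H u$ is exactly the Stokes extension of the corresponding weighted boundary data, so a cell-by-cell comparison with the homogenized expansion splits the error into a standard $H$-scale $P_1$-interpolation error of $p^\ast$ plus the homogenization residual (including a $\sqrt\varepsilon$ boundary-layer contribution), producing the $\varepsilon H+\varepsilon\sqrt\varepsilon$ part of the bound.

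The consistency term $\mathcal{E}_c$ is where the CR$_3$ weights pay off. On each edge, the conditions $\int_E[[w_H]]=0$ \emph{and} $\int_E(n\cdot[[w_H]])\psi_E=0$ allow me to subtract from $(\partial u/\partial n-pn)\cdot n_E$ any affine function $a_E+b_E\psi_E$ without altering the integral, and from the tangential component to subtract any constant. By the homogenization of Section \ref{sec:homog}, the dominant part of $pn\cdot n_E$ is the smooth $p^\ast$, whose best affine approximation on $E$ leaves only an $O(H^2)$ residual, while the tangential piece and the oscillating remainder of $p-p^\ast$ and of $\partial u/\partial n$ are of size $O(\varepsilon)$ and are handled by subtracting their mean only. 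Combining this edge-by-edge bound with a trace estimate for $[[w_H]]$ reflecting the microscopic scaling of MsFEM functions in porous media, and summing over the $O(H^{-2})$ edges, gives the remaining $\varepsilon\sqrt\varepsilon$ and $\varepsilon\sqrt{\varepsilon/H}$ contributions.

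Finally, for the pressure, given $p_H-P_H p$ (where $P_H$ is the $L^2$-projection onto $M_H$) I apply Lemma \ref{infsupD} to produce $v_H\in\VH$ with $\Div v_H=p_H-P_H p$ cellwise and $|v_H|_{H^1}\le C\|p_H-P_H p\|_{L^2}$; testing the MsFEM equation against $v_H$ and subtracting from the exact weak form recycles the consistency analysis above and yields the pressure bound with the announced $\varepsilon^{-1}$ scaling, after controlling $\|p-P_H p\|_{L^2}$ separately through the homogenization of $p$. I expect the main obstacle to be the sharp calibration of the homogenization error in both the interpolation and consistency steps, and in particular the isolation of the boundary-layer contribution responsible for the $\sqrt\varepsilon$ term, which crucially uses the strict confinement of perforations away from $\partial\Omega$ built into Assumption \ref{AssPeriod}.
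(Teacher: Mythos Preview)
Your overall architecture --- Strang-type splitting on the divergence-free reformulation, interpolant $I_H u$ matching weighted edge averages, CR$_3$-based consistency, and pressure via the discrete inf-sup --- is exactly that of the paper. The place where your proposal has a genuine gap is the best-approximation bound.

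You propose to bound $|u - I_H u|_{H^1}$ by a ``cell-by-cell comparison with the homogenized expansion''. But $I_H u$ is only defined implicitly as a local Stokes solution with prescribed weighted edge data (Lemma \ref{PhiGen}); there is no explicit representation of $I_H u$ in terms of the periodic correctors $w_i$, and a direct comparison with $u^\ast=\varepsilon^2(w_i)_\varepsilon(f_i-\partial_i p^\ast)$ is not available. The paper's key step is instead to expand $|u-v_H|^2_{H^1}$ via integration by parts on each cell, inserting the pressures $p'=p-p^\ast$ and $q_H=\pi_H(v_H)$ (harmless since $\Div(u-v_H)=0$): because \emph{both} $u$ and $v_H$ satisfy Stokes-type equations on $T$, the volume term reduces to $(f-\nabla p^\ast)\cdot(u-v_H)$, and the boundary term with $(\nabla v_H)n - q_H n$ vanishes identically thanks to the CR$_3$ structure of the basis functions. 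What remains is $\sum_T\int_{\partial T}((\nabla u)n - p'n)\cdot(u-v_H)$, which is precisely the object of Lemma \ref{lem:brique}. In other words, the paper does not estimate the interpolation error by comparing two explicit expressions, but by a local Galerkin-orthogonality argument that routes it back through the same consistency functional as the nonconformity term.

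Your treatment of the consistency term is closer to workable but also needs care: you want to subtract an affine approximation of $p$ edge by edge and then estimate the oscillating residual $p-p^\ast$ there. But $p\in L^2(\Omega^\varepsilon)$ only, so its trace on an edge is not defined, and the claim $p-p^\ast=O(\varepsilon)$ holds only pointwise for the formal expansion, not for the true pressure (Theorem \ref{HomMain} gives $O(\sqrt\varepsilon)$ in $L^2$). The paper circumvents this by never isolating the pressure trace: Lemma \ref{lem:brique} rewrites the boundary integral of $(\nabla u)n - p'n$ as a volume integral via the divergence theorem, substitutes the smooth approximations $\tilde u^\ast$, $\tilde p^{\varepsilon,1}$, and only then returns to the boundary with quantities whose traces are legitimately bounded. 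The $p^\ast$ part is handled separately as a \emph{volume} term $\sum_T\int_T\nabla p^\ast\cdot v$, and the CR$_3$ trick enters there, via $\sum_T\int_T\nabla(I_H p^\ast)\cdot v=0$ followed by the perforated Poincar\'e inequality of Lemma \ref{lem:poincare perforated broken}. Note that the same Lemma \ref{lem:brique} furnishes the $\sqrt{\varepsilon/H}$ contribution in both the interpolation and the consistency steps; this unification is the technical heart of the proof and is what your outline is missing.
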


The proof is postponed until Section \ref{sec:proof} and will use the results on homogenization from the next section and some technical lemmas from Section \ref{sec:preliminaries}.

%%%%%%%%%%%%%%%%%%%%%%%%%%%%%%%%%%%%%%%%%%%%%%%%%%%%%%%%%%%%%%
\section{Homogenization for Stokes in two dimensions}
\label{sec:homog}

\subsection{The formal two-scale asymptotic expansion}
\label{sec:formal expansion}

We want to derive the asymptotic equation corresponding to~\eqref{genP},
in the limit $\varepsilon
\rightarrow0$. Let us do it first formally by introducing the two-scale
asymptotic expansions in terms of slow variable $x$ and the fast variable $
y=x/\varepsilon$. This procedure is quite well known, see for example \cite
{hornung,Sanchez80}. We describe it here for completeness and to set our
notations.
Let us expand~$u$ and~$p$ as
\begin{align*}
u(x) &= \sum_{k\ge 0} \varepsilon^k u_{k}(x,y)\,, & p(x) &= \sum_{k\ge 0} \varepsilon^k p_{k}(x,y)\,, &y=&\frac{x}{\varepsilon}
\end{align*}
where all the functions $u_{k},p_{k}$ are assumed $\mathbb{Z}^2$-periodic
in $y$, \emph{i.e.} 1-periodic with respect to both $y_{1}$ and $y_{2}$. We substitute these
series into the Stokes equations, use the chain rule, 
and get in the leading order $\frac{1}{\varepsilon^{2}}$
\begin{alignat*}{2}
-\Delta_{y}u_{0} &= 0&\quad&\text{ on }\mathcal{F}\,, \\
u_{0} &= 0&&\text{ on }\partial B\,.
\end{alignat*}
Reminding that $u_{0}$ is $\mathbb{Z}^2$-periodic in $y$, this gives $
u_{0}=0$ everywhere. At the next order, i.e. $\frac{1}{\varepsilon}$, we get
\begin{alignat*}{2}
-\Delta_{y}u_{1}+\nabla_{y}p_{0} &= 0&\quad&\text{ on }\mathcal{F}\,, \\
\Div_{y}u_{1} &= 0&&\text{ on }\mathcal{F}\,, \\
u_{1} &= 0&&\text{ on }\partial B\,.
\end{alignat*}
Reminding that $u_{1}$ and $p_{0}$ are $\mathbb{Z}^2$-periodic in $y$,
this gives $u_{1}=0$ everywhere and $p_{0}(x,y)=p^*(x)$.

At order $\varepsilon^{0}$ we get something less trivial
\begin{alignat*}{2}
-\Delta_{y}u_{2}+\nabla_{y}p_{1} &= f-\nabla_{x}p^*&\quad&\text{ on }
\mathcal{F}\,, \\
\Div_{y}u_{2} &= 0&&\text{ on }\mathcal{F}\,, \\
u_{2} &= 0&&\text{ on }\partial B\,,
\end{alignat*}
with the usual requirement that $u_{2}$ and $p_{1}$ be $\mathbb{Z}^2$-periodic
in $y$. This gives that $u_{2}$ and $p_{1}$ are the linear
combinations of the solutions to the following cell Stokes problems: for $
i=1,2$ find $w_{i}:\mathcal{F}\rightarrow\mathbb{R}^2$ and $
\pi_{i}:\mathcal{F}\rightarrow\mathbb{R}$,~$\mathbb Z^2$-periodic and solution of
\begin{alignat}{2}
-\Delta w_{i}+\nabla\pi_{i} &= e_{i}&\quad&\text{ on }\mathcal{F}\,,
\label{eq:corrector-lions}
\\
\Div w_{i} &= 0&&\text{ on }\mathcal{F}\,,
\notag
\\
w_{i} &= 0&&\text{ on }\partial B\,,
\notag
\\
\int_{\mathcal{F}}\pi_{i} &= 0\,. &&
\notag
\end{alignat}
We have thus, employing from now on the Einstein convention of summation on repeating indices
\begin{alignat}{2}
u_{2}(x,y) &= w_{i}(y)(f_{i}(x)-\partial_{i}p^*(x))\,, \label{u2}\\
p_{1}(x,y) &= \pi_{i}(y)(f_{i}(x)-\partial_{i}p^*(x))\,. \notag
\end{alignat}

The equation for $p^*(x)$ results from the next term in the asymptotic
expansion, at order~$\varepsilon$:
\begin{alignat*}{2}
-\Delta_{y}u_{3}+\nabla_{y}p_{2} &=
2(\nabla_{y}\cdot\nabla_{x})u_{2}-\nabla_{x}p_{1}&\quad&\text{ on }\mathcal{F}\,, \\
\Div_{y}u_{3} &= -\Div_{x}u_{2}&&\text{ on }\mathcal{F}\,, \\
u_{3} &= 0&&\text{ on }\partial B\,.
\end{alignat*}
plus periodicity conditions. The total outward flux of $u_3$ on~$\partial\mathcal F$ is zero in view of the boundary conditions,
so that the system of equations above has a solution if and only if $\average{\Div
_{x}u_{2}}=0 $ where $\average{\cdot}$ stands for the average over $\mathcal{F}$:
\begin{equation}\label{devavg}
\average{v}=\frac{1}{|\mathcal{F}|}\int_{\mathcal{F}}v(y)\, dy.
\end{equation}
This gives Darcy equation for $p^*$:
$$
    \Div(\average{w_{i}}(f_{i}-\partial_{i}p^*)) = 0\quad\text{ on }\Omega\,.
$$
We see that $u_{3}$ and $p_{2}$ are the linear combinations of the solutions
to yet another cell Stokes problem: for $i,j=1,2$ find $\gamma_{ij}:\mathcal{F}\rightarrow\mathbb{R}^2$ and $\vartheta_{ij}:\mathcal{F}
\rightarrow\mathbb{R}$,~$\mathbb Z^2$-periodic and solution of
\begin{alignat}{2}
-\Delta\gamma{}_{ij}+\nabla\vartheta{}_{ij} &=
2\partial_{j}w_{i}-\pi_{i}e_{j}&\quad&\text{ on }\mathcal{F}\,, \notag\\
\Div\gamma_{ij} &= -w_{i}\cdot e_{j}+\average{w_{i}\cdot e_{j}}&&\text{ on }\mathcal{F}\,, \label{gammaij}\\
\gamma_{ij} &= 0&&\text{ on }\partial B\,, \notag\\
\int_{\mathcal{F}}\vartheta_{ij} &= 0\,. \notag
\end{alignat}
We have thus
\begin{align}\label{u3}
u_{3}(x,y) &= \gamma_{ij}(y)\partial_{j}(f_{i}(x)-\partial_{i}p^*(x))\,,
\\
p_{2}(x,y) &=
\vartheta_{ij}(y)\partial_{j}(f_{i}(x)-\partial_{i}p^*(x))\,.
\notag
\end{align}

From now on, we will denote by~$u^*$ the homogenized velocity, i.e. the first non-zero terms in the expansion of~$u$:
\begin{equation}
u^* = \varepsilon^2 u_2(x,y) = \varepsilon^2 (w_i)_\varepsilon\left( f_i - \partial_i p^* \right)\,.
\label{eq:homogenized velocity}
\end{equation}
\textbf{Notation:} We use a shorthand  $(\cdot)_\eps$ to indicate the rescaling by $\eps$. Thus, $(\phi)_\eps(x)=\phi\left(\frac{x}{\eps}\right)$ for any $\Z^2$ periodic function $\phi$. 

The procedure above does not provide boundary conditions for $p^*$.
The good choice for these is to ensure that the normal component of the averaged homogenized velocity
vanishes on the boundary, i.e. $n\cdot \average{u^*}=0$ on $\partial\Omega$.

\subsection{A rigorous homogenization estimate}

The homogenization of the Stokes equations was first rigorously
studied in \cite{Tartar80}, where the weak $L^2$ convergence for the velocity
and the strong $L^2$ convergence for the pressure were established.
The strong $L^2$ convergence for the velocity was later proven in
\cite{allaire89}. However, for our purposes it is desirable to have a convergence result in $H^{1}$ and, moreover, an estimate of the
homogenization error in this norm. Such an estimate is available in
\cite{Mik96} with a relative error of order $\sqrt[6]{\varepsilon}$.
We shall improve it here to $\sqrt{\varepsilon}$ and provide another approach to the proof 
(as already noted, our definition of the perforated domain is slightly different from that in \cite{Mik96}).
Our homogenization result is as follows.

\begin{theorem}
\label{HomMain}Recall Assumption \ref{AssPeriod} and let $u,p$ be the solution to the Stokes equations (\ref{genP}
)--\eqref{genB}, $p^{\ast}$ be the solution to the Darcy equation
\begin{alignat}{2}
    \Div(\average{w_{i}}(f_{i}-\partial_{i}p^*)) &= 0&\quad&\text{ on }\Omega\,,
\label{darcy}
\\
n\cdot\average{w_{i}}(f_{i}-\partial_{i}p^*) &= 0
&&\text{ on } \partial\Omega\,,
\label{DarcyB}
\end{alignat}
and $u^\ast$ be defined by \eqref{eq:homogenized velocity} with $w_i$ extended by 0 inside $B$.
Assuming that $f$ and $p^{\ast}$ are sufficiently smooth there holds
\begin{align}
\Vert p-p^{\ast}\Vert_{L^2(\Omega^\varepsilon)} & \leq C\varepsilon^{\frac{1}{2}}\Vert f-\nabla
p^{\ast}\Vert_{H^{2}(\Omega) \cap C^{1}(\bar\Omega)} \,,  \label{errP} \\
|u-u^{\ast}|_{H^{1}(\Omega^\varepsilon)} & \leq C\varepsilon^{\frac{3}{2}}\Vert f-\nabla
p^{\ast}\Vert_{H^{2}(\Omega) \cap C^{1}(\bar\Omega)}  \label{errH1} \,,\\
\Vert u-u^{\ast}\Vert_{L^2(\Omega^\varepsilon)} & \leq C\varepsilon^{\frac{5}{2}
}\Vert f-\nabla p^{\ast}\Vert_{H^{2}(\Omega) \cap C^{1}(\bar\Omega)}  \label{errL2}
\end{align}
where~$C$ is independent of $\varepsilon$.
\end{theorem}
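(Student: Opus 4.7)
The plan is to construct a two-scale approximation $(\tilde u, \tilde p)$ from the formal expansion of Subsection \ref{sec:formal expansion}, correct it near $\partial\Omega$ to enforce the Dirichlet condition there, and then derive the three error bounds by an energy argument for the resulting Stokes residual. Concretely, I would set
\begin{align*}
\tilde u &:= \varepsilon^{2}(\bar w_i)_\varepsilon\,(f_i - \partial_i p^*) + \varepsilon^{3}(\bar\gamma_{ij})_\varepsilon\,\partial_j(f_i - \partial_i p^*),\\
\tilde p &:= p^* + \varepsilon\,(\pi_i)_\varepsilon\,(f_i - \partial_i p^*) + \varepsilon^{2}(\vartheta_{ij})_\varepsilon\,\partial_j(f_i - \partial_i p^*),
\end{align*}
with $\bar w_i,\bar\gamma_{ij}$ extended by zero inside the obstacles. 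Substituting this ansatz into the Stokes operator and exploiting the cell problems \eqref{eq:corrector-lions} and \eqref{gammaij} together with the Darcy equation \eqref{darcy} satisfied by $p^*$, all the low-order contributions cancel in the interior, leaving only $O(\varepsilon)$ residuals in $L^2(\Omega^\varepsilon)$ controlled by $\Vert f-\nabla p^*\Vert_{\espHC}$.

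The main obstacle is the boundary layer. The ansatz $\tilde u$ vanishes on $\partial B^\varepsilon$ (since $\bar w_i,\bar\gamma_{ij}$ do) but not on $\partial\Omega$; moreover, the interior two-scale structure breaks down in the strip $S_\varepsilon := \Omega\setminus \bigcup_{i\in\mathcal I} Y_i$ of width $O(\varepsilon)$ where no holes are present. The Darcy boundary condition \eqref{DarcyB}, which forces $n\cdot\average{w_i}(f_i-\partial_i p^*)=0$ on $\partial\Omega$, is precisely what ensures that the averaged normal flux of $\tilde u$ across $\partial\Omega$ vanishes, and is thus the solvability condition required to build a divergence-free correction. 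I would choose a smooth cutoff $\chi_\varepsilon$ equal to $1$ at distance $\geq 2\varepsilon$ from $\partial\Omega$ and vanishing on an $\varepsilon$-neighborhood of $\partial\Omega$, with $|\nabla\chi_\varepsilon|\lesssim 1/\varepsilon$, and set $\hat u := \chi_\varepsilon\tilde u + b_\varepsilon$, where $b_\varepsilon$ is a Bogovskii-type corrector supported in $S_\varepsilon$ compensating the extra divergence $\nabla\chi_\varepsilon\cdot\tilde u$. From the pointwise bounds $|\tilde u|_{L^\infty}=O(\varepsilon^{2})$, $|\nabla\tilde u|_{L^\infty}=O(\varepsilon)$, and the $O(\varepsilon)$-measure of $S_\varepsilon$, one obtains $\Vert b_\varepsilon\Vert_{L^2}=O(\varepsilon^{5/2})$ and $|b_\varepsilon|_{H^1}=O(\varepsilon^{3/2})$; the $\sqrt\varepsilon$ deficit coming from the strip measure is precisely what yields the $\sqrt\varepsilon$ relative error in the final bounds.

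With $\hat u$ at hand, the pair $(r,s):=(u-\hat u,\,p-\tilde p)$ vanishes on $\partial\Omega^\varepsilon$ and solves the Stokes system with momentum residual of $L^2$-norm $O(\varepsilon)$ and divergence residual of $L^2$-norm $O(\varepsilon^{3/2})$. Testing the momentum equation against $r$ itself eliminates the pressure term, and together with the Poincaré inequality on $\Omega^\varepsilon$ (whose constant is $O(\varepsilon)$ since each point lies within $O(\varepsilon)$ of a hole) it yields $|r|_{H^1}=O(\varepsilon^{3/2})$. Adding $|\hat u - u^*|_{H^1}=O(\varepsilon^{3/2})$, which stems from the $\varepsilon^{3}u_3$ term and from $b_\varepsilon$, gives \eqref{errH1}, and the same Poincaré inequality applied to $r$ produces \eqref{errL2} with the extra $\varepsilon$ factor. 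For the pressure bound \eqref{errP}, I would select $v\in H^1_0(\Omega^\varepsilon)^2$ with $\Div v = s$ and $|v|_{H^1}\lesssim\Vert s\Vert_{L^2}$ via a uniform velocity-pressure inf-sup on $\Omega^\varepsilon$ (established by a cell-by-cell Bogovskii construction), test the momentum equation against $v$, and combine the resulting estimate with $\Vert\tilde p - p^*\Vert_{L^2}=O(\sqrt\varepsilon)$. The most delicate technical points are ensuring that the inf-sup constant on $\Omega^\varepsilon$ does not degenerate in $\varepsilon$ and handling the boundary-strip contributions sharply enough that the $\sqrt\varepsilon$ loss appears only in the relative error and not as an additional multiplicative factor.
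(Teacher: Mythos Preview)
Your overall strategy---two-scale ansatz, boundary-layer correction, energy argument with the $O(\varepsilon)$ Poincar\'e constant---is the same as the paper's, but two of your quantitative claims are wrong, and one of them is fatal.

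The minor one: the velocity--pressure inf-sup on $\Omega^\varepsilon$ is \emph{not} uniform. A cell-by-cell Bogovskii gives only $|v|_{H^1}\le (C/\varepsilon)\|\Div v\|_{L^2}$; this is Lemma~\ref{infsupq}, and it is sharp (a uniform constant would force $\|p\|_{L^2}=O(\varepsilon)$, contradicting $p\to p^*=O(1)$). This error is harmless for the final bound, since the lost $1/\varepsilon$ is recovered through the Poincar\'e factor when you pair the residual with $v$, and you still land at $\|p-p^*\|_{L^2}=O(\sqrt\varepsilon)$.

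The real gap is the boundary corrector $b_\varepsilon$. Cutting off the full oscillating field produces a divergence $\nabla\chi_\varepsilon\cdot\tilde u$ of $L^2$-norm $O(\varepsilon^{3/2})$, but any Bogovskii operator---whether on the thin strip $S_\varepsilon$ or on all of $\Omega^\varepsilon$---carries a $1/\varepsilon$ penalty, so you only get $|b_\varepsilon|_{H^1}=O(\sqrt\varepsilon)$, not $O(\varepsilon^{3/2})$; your heuristic scaling ``$|\nabla\tilde u|_{L^\infty}=O(\varepsilon)$ on a set of measure $\varepsilon$'' does not transfer to the Bogovskii lift. The paper avoids this through two devices you are missing. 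First, it writes $w_i-|\mathcal F|\langle w_i\rangle=\nabla^\perp\psi_i$ and applies the cutoff to $\psi_i$ rather than to $w_i$; the resulting ansatz $u^{\varepsilon,3}$ in~\eqref{eq:u star three} stays nearly divergence-free, with $\|\Div u^{\varepsilon,3}\|_{L^2}=O(\varepsilon^{5/2})$, which survives the $1/\varepsilon$ from Lemma~\ref{infsupq}. Second, the boundary trace that remains is the \emph{smooth, non-oscillating} field $g=-\varepsilon^2|\mathcal F|\langle w_i\rangle(f_i-\partial_ip^*)$, which is divergence-free with $g\cdot n=0$ on $\partial\Omega$ thanks to~\eqref{DarcyB}; this is lifted not by Bogovskii but by the stream-function construction $v_b=\nabla^\perp(\eta\Psi)$ of Lemma~\ref{infsupb}, giving the sharp $|v_b|_{H^1}\le (C/\sqrt\varepsilon)\|g\|_{C^1}=O(\varepsilon^{3/2})$. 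Without separating the oscillating and mean parts of $w_i$ and treating them by these two different mechanisms, your argument cannot reach $\varepsilon^{3/2}$.
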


\begin{remark}
The estimate for the velocity in the $H^{1}$ norm essentially says that the
relative error is of order $\sqrt{\varepsilon}$. Indeed, the velocity itself
is of order $\varepsilon^{2}$, but its derivatives are of order $\varepsilon$
since both the exact solution and its homogenized approximation oscillate on
the length scale $\varepsilon$. Also note that the deterioration of order
$\sqrt{\varepsilon}$ is due to the boundary layers near $\partial\Omega$.
Indeed, $ u^{\ast}$ does not satisfy the boundary condition $u=0$ on $
\partial\Omega$, which worsens the approximation near the boundary.
Technically, this is taken into account by the introduction of the cut-off
function $\eta^{\varepsilon}$ in the forthcoming proof. If the boundary
layers were absent, which would be the case, for example, under the periodic
boundary conditions over a rectangular box $\Omega=(0,\varepsilon
n)\times(0,\varepsilon m)$ with $n,m\in\mathbb{N}$, the a priori error
estimate would give the relative error of order $\varepsilon$. Indeed, inspecting the forthcoming proof, one can see 
that neither Lemma \ref{infsupb} nor the cut-off functions $\eta^\eps$ are no longer needed in this case and the final result becomes
\begin{equation*}
\|u-u^{\ast}\|_{L^2(\Omega)}+\eps|u-u^{\ast}|_{H^{1}(\Omega)}+\varepsilon^2\Vert p-p^{\ast}\Vert_{L^2(\Omega)}\leq
C\varepsilon^{3}\Vert f-\nabla p^{\ast}\Vert_{H^{2}(\Omega)}\,.
\end{equation*}
\end{remark}

Before providing the proof of Theorem~\ref{HomMain}, let us establish two technical
lemmas of the inf-sup type related to the divergence free constraint (Lemma \ref{infsupq}) and to
the boundary conditions for the velocity (Lemma \ref{infsupb}). All these results are proved under Assumption \ref{AssPeriod}.

\begin{lemma}
\label{infsupq} For any $q\in L^2_0(\Omega^{\varepsilon })$ there exists $v\in H_{0}^{1}(\Omega
^{\varepsilon })^{2}$ such that
\begin{equation}
    \Div v=q\text{ on }\Omega^\varepsilon\text{ and }|v|_{H^{1}(\Omega
^{\varepsilon })}\leq \frac{C}{\varepsilon }\Vert q\Vert_{L^2(\Omega
^{\varepsilon })}\,,  \label{infsup1}
\end{equation}
where $C>0$ is a constant independent of $\varepsilon $.
\end{lemma}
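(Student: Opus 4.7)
The plan is to obtain $v$ by combining a "macroscopic" transport of mass across the cells with a "microscopic" cell-by-cell correction, in the spirit of the Tartar--Allaire restriction operator for periodic perforations.

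First, I extend $q$ by zero inside $B^\varepsilon$, getting $\tilde q\in L^2_0(\Omega)$ (mean zero is preserved since $\int_{\Omega^\varepsilon}q=0$). By the classical Bogovskii lemma on the Lipschitz domain $\Omega$, there exists $v_0\in H^1_0(\Omega)^2$ with $\Div v_0=\tilde q$ and $|v_0|_{H^1(\Omega)}\leq C\|\tilde q\|_{L^2(\Omega)}=C\|q\|_{L^2(\Omega^\varepsilon)}$; Poincar\'e then gives $\|v_0\|_{L^2(\Omega)}\leq C\|q\|_{L^2(\Omega^\varepsilon)}$ as well. This $v_0$ however does not vanish on $\partial B^\varepsilon$, so it is not admissible as $v$.

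Second, I introduce a restriction operator $R_\varepsilon:H^1_0(\Omega)^2\to H^1_0(\Omega^\varepsilon)^2$ built cell by cell. On the boundary strip $\Omega\setminus\bigcup_{i\in\mathcal I}Y_i$ (which contains no holes), set $R_\varepsilon w=w$. Inside each full cell $Y_i$, define $R_\varepsilon w$ by solving on the reference fluid cell $\mathcal F$ an auxiliary Stokes problem that produces, for every $w\in H^1(Y)^2$, a vector field $\mathcal R(w)\in H^1(\mathcal F)^2$ vanishing on $\partial B$, agreeing with $w$ on $\partial Y$ (so the pieces glue into an $H^1$ function across cell interfaces), and preserving the integrated divergence $\int_\mathcal F\Div\mathcal R(w)=\int_Y\Div w$; the standard construction (Tartar's appendix in Sanchez-Palencia, or Allaire) gives $\|\mathcal R(w)\|_{H^1(\mathcal F)}\leq C\|w\|_{H^1(Y)}$. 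Rescaling to $Y_i$ and $\mathcal F_i$ produces, by a straightforward change of variables, the bound
\begin{equation*}
|R_\varepsilon w|_{H^1(\Omega^\varepsilon)}\leq C\bigl(|w|_{H^1(\Omega)}+\tfrac{1}{\varepsilon}\|w\|_{L^2(\Omega)}\bigr),
\end{equation*}
with the $1/\varepsilon$ factor being the source of the $1/\varepsilon$ in the lemma. Setting $v_1:=R_\varepsilon v_0$, the flux preservation yields $\int_{\mathcal F_i}\Div v_1=\int_{Y_i}\tilde q=\int_{\mathcal F_i}q$ for every $i\in\mathcal I$, and $\Div v_1=q$ on the hole-free boundary strip; hence $q-\Div v_1$ has zero mean on each component of $\Omega^\varepsilon$.

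Third, I correct cell-wise. On each $\mathcal F_i$, apply Bogovskii to the mean-zero datum $q-\Div v_1$ to obtain $v_2^i\in H^1_0(\mathcal F_i)^2$ with $\Div v_2^i=q-\Div v_1$ on $\mathcal F_i$; by a scaling argument $x\mapsto(x-\varepsilon i)/\varepsilon$, the Bogovskii constant on $\mathcal F_i$ is the same as on $\mathcal F$ (both gradients and $L^2$ norms pick up the same power of $\varepsilon$), so $|v_2^i|_{H^1(\mathcal F_i)}\leq C\|q-\Div v_1\|_{L^2(\mathcal F_i)}$ uniformly in $\varepsilon$. Extending each $v_2^i$ by zero and setting $v:=v_1+\sum_i v_2^i$ gives $v\in H^1_0(\Omega^\varepsilon)^2$ with $\Div v=q$ on $\Omega^\varepsilon$, and
\begin{equation*}
|v|_{H^1(\Omega^\varepsilon)}\leq |v_1|_{H^1}+\|q-\Div v_1\|_{L^2}\leq C\bigl(|v_0|_{H^1}+\tfrac1\varepsilon\|v_0\|_{L^2}+\|q\|_{L^2}\bigr)\leq\frac{C}{\varepsilon}\|q\|_{L^2(\Omega^\varepsilon)},
\end{equation*}
which is the desired bound.

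The main obstacle is the construction of $R_\varepsilon$ on a single cell with the three properties (vanishing on $\partial B$, agreement with $w$ on $\partial Y$, and preservation of the integrated divergence) plus the $H^1$ estimate; this is technical but is by now classical in homogenization for perforated domains, and I would either reproduce the argument briefly or simply invoke it from Tartar/Allaire. Everything else is a routine combination of Bogovskii on $\Omega$, scaling on each cell, and the flux bookkeeping that is crucial for the final cell-wise correction to have zero mean.
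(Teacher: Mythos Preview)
Your proof is correct and follows essentially the same route as the paper: extend $q$ by zero, apply Bogovski\u{\i} on $\Omega$ to get a global lift $v_0$, then modify $v_0$ cell by cell to vanish on $\partial B_i$ while matching $v_0$ on $\partial Y_i$, with the $1/\varepsilon$ factor arising from the scaling $\|\hat w\|_{H^1(Y)}\le\varepsilon^{-1}\|w\|_{H^1(Y_i)}$. The only organizational difference is that the paper builds the cell-wise modification directly (trace extension $r$ plus a single Bogovski\u{\i} correction $z$ on $\mathcal F$ so that $\Div(r+z)=\hat q$ exactly), whereas you invoke the Tartar--Allaire restriction operator as a black box and then apply a second Bogovski\u{\i} correction; since that restriction operator is itself constructed by exactly the trace-plus-Bogovski\u{\i} argument, the two proofs coincide up to packaging.
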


\begin{proof}
Let us take any $q\in L^2_0(\Omega)$ such that $q=0$ on $B^\varepsilon$. Using
\cite[Corollary 2.4, p.24]{GiraultRaviart}, we can pick some $w\in H_{0}^{1}(\Omega)^{2}$
such that
\begin{equation}
\Div w=q\text{ on }\Omega\text{ and }|w|_{H^{1}(\Omega)}\leq
C\Vert q\Vert_{L^2(\Omega)}.  \label{vglob}
\end{equation}
This gives us a velocity field~$w$ on~$\Omega$ that does not satisfy the boundary
conditions on~$\Omega^\varepsilon$, \emph{i.e.}~$w\neq 0$ on~$\partial
B^\varepsilon$. Using it as a starting point, we can construct
an admissible velocity field on each cell~$\mathcal{F}_i$, proceeding cell by cell, as follows.

Let us pick any $i\in\mathcal{I}$, denote by $w_{Y_i}$, $q_{Y_i}$ the restrictions of $w$, $q$
to the cell $Y_i$ and map them to the reference cell $Y$:
$$
   \hat w_Y(x) = w_{Y_i}(\varepsilon (x+i)) ,\quad
    \hat q_Y(x) = \varepsilon q_{Y_i}(\varepsilon (x+i)).
$$
The scalings are chosen so that 
$$
\Div\hat{w}_{Y}=\hat{q}_{Y}\text{ on }Y.
$$
A standard trace theorem assures that there exists $r\in H^1(\mathcal{F})^2$ such that
$$
r=\hat{w}_{Y}\text{ on }\partial Y,\quad r=0\text{ on }\partial B \quad\text{and}\quad 
\|r\|_{H^1(\mathcal{F})}\le C\|\hat{w}_{Y}\|_{H^{\frac 12}(\partial Y)} \le C\|\hat{w}_{Y}\|_{H^1(Y)}.
$$
Using again the corollary from \cite{GiraultRaviart} mentioned above and noting that 
$$
\int_\mathcal{F}\left(\hat{q}_{Y}-\Div r\right)
=
\int_\mathcal{F}\hat{q}_{Y}-\int_{\partial \mathcal{F}}r\cdot n
=
\int_Y\hat{q}_{Y}-\int_{\partial Y}\hat{w}_{Y}\cdot n
=
\int_Y(\hat{q}_{Y}-\Div\hat{w}_{Y})
=0
$$
we can construct $z\in H^1_0(\mathcal{F})^2$ with 
$$
\Div z=\hat{q}_{Y}-\Div r
\text{ and }
\|z\|_{H^1(\mathcal{F})} 
\le C\|\hat{q}_{Y}-\Div r\|_{L^2(\mathcal{F})}
\le C\|\hat{w}_{Y}\|_{H^1(Y)}.
$$
Setting now 
$\hat{v}_{Y}\in(H^{1}(\mathcal{F}))^{2}$ as $\hat{v}_{Y}=r+z$ we observe
$$
\hat{v}_{Y}=\hat{w}_{Y}\text{ on }\partial Y,\quad \hat{v}_{Y}=0\text{ on }\partial B,\quad 
\Div\hat{v}_{Y}=\hat{q}_{Y}\quad\text{and}\quad 
\|\hat{v}_{Y}\|_{H^1(\mathcal{F})} \le C\|\hat{w}_{Y}\|_{H^1(Y)}.
$$
Note that the constants $C$ in the above bounds depend only on the geometry of $\mathcal{F}$. In particular,
they are obviously $\eps$-independent.
We now rescale the cell $Y$ back to the cell $Y_{i}$ of
size $\varepsilon$ and define  $v_{Y_i}\in H^1(F_i)$ by $\hat v_Y(x) = v_{Y_i}(\varepsilon (x+i))$. 
Recalling the scalings of the functions and of their norms
\[
\left\{
\begin{aligned}
    \hat v_Y(x) &= v_{Y_i}(\varepsilon (x+i))\\
    \hat w_Y(x) &= w_{Y_i}(\varepsilon (x+i)) \\
    \hat q_Y(x) &= \varepsilon q_{Y_i}(\varepsilon (x+i))
\end{aligned}
\right.
\ \Longrightarrow\ 
\left\{
\begin{aligned}
    \left| \hat v_Y \right|_{H^1} &= \left| v_{Y_i} \right|_{H^1}\\
    \Vert \hat w_Y \Vert_{H^1} &= \left( |w_{Y_i}|_{H^1}^2 + \frac 1{\varepsilon^2} \Vert w_{Y_i} \Vert_{L^2} \right)^{\frac 12} \le \frac 1\varepsilon\Vert w_{Y_i} \Vert_{H^1}\\
    \Vert \hat q_Y \Vert_{L^2} &= \Vert q_{Y_i} \Vert_{L^2}
\end{aligned}
\right.
\]
we conclude
\begin{equation}
\Div v_{Y_{i}}=q_{Y_{i}}\text{ on }\mathcal{F}_i ,~v_{Y_{i}}=w_{Y_{i}}
\text{ on }\partial Y_{i},\text{ }v_{Y_{i}}=0\text{ on }\partial B_{i}\,,
\label{vloc}
\end{equation}
and
\begin{equation*}
|v_{Y_{i}}|_{H^{1}(\mathcal{F}_i )}\leq
\frac{C}{\varepsilon}\Vert w_{Y_{i}}\Vert_{H^{1}(Y_{i})}\,.
\end{equation*}

We now collect all the pieces $v_{Y_{i}}$ into $v\in(H_{0}^{1}(\Omega^\varepsilon))^{2}$ such
that $v|_{Y_{i}}=v_{Y_{i}}$ for any cell $Y_{i}$, $i\in\mathcal{I}$ and let $v=w$
on $\Omega_{b}:=\Omega\setminus\cup_{i\in\mathcal{I}}Y_{i}$. Such a function $v$ meets
all the requirements of the lemma. Indeed, $\Div v=q$ on $\Omega^\varepsilon$
and
\begin{align*}
\text{ }|v|_{H^{1}(\Omega^\varepsilon)}^{2} & =
|w|_{H^{1}(\Omega_{b})}^{2}+\sum_{i\in\mathcal{I}}|v_{Y_{i}}|_{H^{1}(\mathcal{F}_i )}^{2}\\
&\leq|w|_{H^{1}(\Omega_{b})}^{2} +\sum_{i\in \mathcal I}\frac{C}{\varepsilon^{2}}\Vert w\Vert_{H^{1}(Y_{i})}^{2}
\leq\frac{C}{\varepsilon^{2}}\Vert w\Vert_{H^{1}(\Omega)}^{2}
\leq\frac{C}{\varepsilon^{2}}\Vert q\Vert_{L^2(\Omega^\varepsilon)}^{2}\,.
\end{align*}
\end{proof}

Lemma~\ref{infsupq} is very close  to the results on the
restriction operator in \cite{Tartar80,jager95}. Our next result is essentially taken from 
\cite{Mik96} but we provide here a slightly simpler construction that suits well to polygonal domains.

\begin{lemma}
\label{infsupb} For any $g\in (C^{1}(\bar\Omega))^2$ with
$\Div g = 0$ on~$\Omega$, $g\cdot n=0$ on $
\partial\Omega$, $\delta>0$ small enough,
there exists $v\in(H^{1}(\Omega))^{2}$ such that $\supp\,v\subset O^{\delta}:=
\{x\in\Omega:dist(x,\partial\Omega)<\delta\}$ and
\begin{equation*}
v=g\mbox{ on }\partial\Omega,\quad\Div v=0\mbox{ on }\Omega,\mbox{ and }
|v|_{H^{1}(\Omega)}\le\frac{C}{\sqrt{\delta}}\Vert g\Vert_{C^{1}(\Omega)}
\end{equation*}
where $C>0$ is a constant independent of $\delta$.
\end{lemma}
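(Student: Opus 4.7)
The plan is to reduce the construction of $v$ to the choice of a single scalar stream function. Since $\Omega$ is polygonal and simply connected (Assumption~\ref{AssPeriod}), and $g$ is divergence-free with $g\cdot n=0$ on $\partial\Omega$, there exists a stream function $\psi\in C^2(\bar\Omega)$ such that $g=(\partial_2\psi,-\partial_1\psi)$. The tangential derivative of $\psi$ along $\partial\Omega$ coincides with $-g\cdot n=0$, so $\psi$ is constant on each connected component of $\partial\Omega$; connectedness of $\partial\Omega$ lets me normalize $\psi\equiv 0$ on $\partial\Omega$. Consequently $|\psi(x)|\le C\|g\|_{C^0(\bar\Omega)}\,d(x)$ with $d(x):=\mathrm{dist}(x,\partial\Omega)$.

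With $\psi$ in hand, I would pick a smooth $\chi:[0,\infty)\to[0,1]$ with $\chi(0)=1$ and $\chi(t)=0$ for $t\ge 1$, and set
\[
\phi(x):=\psi(x)\,\chi\bigl(\tilde d(x)/\delta\bigr),\qquad v:=(\partial_2\phi,-\partial_1\phi),
\]
where $\tilde d$ is a smooth regularized distance comparable to $d$ and satisfying $|\nabla^k\tilde d|\le C\,d^{1-k}$ for $k=1,2$. By construction $\Div v=0$ in $\Omega$, and, after a harmless rescaling of $\chi$ absorbing the comparability constants in $\tilde d\sim d$, $\mathrm{supp}\,v\subset O^\delta$. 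The product rule gives
\[
v=\chi(\tilde d/\delta)\,g+\frac{\psi}{\delta}\,\chi'(\tilde d/\delta)\,(\partial_2\tilde d,-\partial_1\tilde d),
\]
and evaluating on $\partial\Omega$, where $\chi(0)=1$ and $\psi=0$, yields $v=g$ on $\partial\Omega$.

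The $H^1$ estimate then amounts to bounding $\nabla^2\phi$ in $L^2(O^\delta)$, using $|O^\delta|\le C\delta$. Four terms appear: (i) $\chi(\tilde d/\delta)\nabla^2\psi$, pointwise $\le C\|g\|_{C^1}$, contributes $\le C\|g\|_{C^1}\sqrt\delta$; (ii) the cross terms $(1/\delta)\chi'(\tilde d/\delta)\,\nabla\psi\otimes\nabla\tilde d$ are pointwise $\le C\|g\|_{C^0}/\delta$, contributing $\le C\|g\|_{C^0}/\sqrt\delta$; (iii) $(\psi/\delta^2)\chi''(\tilde d/\delta)\,\nabla\tilde d\otimes\nabla\tilde d$ is controlled via $|\psi|\le C\|g\|_{C^0}d\le C\|g\|_{C^0}\delta$ on $O^\delta$, again $\le C\|g\|_{C^0}/\sqrt\delta$; (iv) $(\psi/\delta)\chi'(\tilde d/\delta)\,\nabla^2\tilde d$, where the singular factor $|\nabla^2\tilde d|\le C/d$ is exactly compensated by $|\psi|\le C\|g\|_{C^0}d$, once more giving $\le C\|g\|_{C^0}/\sqrt\delta$. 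Summing produces $|v|_{H^1(\Omega)}\le C\|g\|_{C^1(\bar\Omega)}/\sqrt\delta$.

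The main technical obstacle I expect is the polygonal geometry: the literal distance $d$ is only Lipschitz, its gradient jumps across the interior angle bisectors, and $\nabla^2 d$ carries distributional contributions on the medial axis, so $d$ cannot be plugged directly into the cutoff without spoiling steps (iii)--(iv). Replacing $d$ by Stein's regularized distance $\tilde d$ removes this defect while preserving the comparability and support properties used above. An equivalent concrete alternative, avoiding any reference to $\tilde d$, is a local construction: take a partition of unity subordinate to a covering of $\partial\Omega$ by neighborhoods of individual edges and corners, set $\phi_j(x)=-a_j(s_j)\,t_j\,\chi(t_j/\delta)$ on the $j$-th edge in the natural arclength/normal coordinates $(s_j,t_j)$ with $a_j=g\cdot\tau_j$, and patch through a short smooth interpolation near each vertex; both variants yield the same $\delta^{-1/2}$ bound.
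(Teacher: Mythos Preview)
Your proposal is correct and follows essentially the same route as the paper: introduce a stream function $\Psi$ for $g$ with $\Psi=0$ on $\partial\Omega$, multiply by a boundary cutoff, and set $v=\nabla^\perp$ of the product; the $H^1$ bound then reduces to bounding $\nabla^2(\eta\Psi)$ in $L^2(O^\delta)$ using $|O^\delta|\le C\delta$ and $|\Psi|\le C\|g\|_{C^0}\,d$. The only difference is cosmetic: the paper simply posits a smooth cutoff $\eta$ with $\|\nabla\eta\|_{L^\infty}\le C/\delta$, $\|\nabla^2\eta\|_{L^\infty}\le C/\delta^2$ and gets three terms, whereas you construct the cutoff explicitly as $\chi(\tilde d/\delta)$ via Stein's regularized distance and therefore see a fourth term (your (iv)) from $\nabla^2\tilde d$; your extra care about the polygonal corners is a justification of what the paper takes for granted, not a different argument.
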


\begin{proof} According to \cite[Theorem 3.1]{GiraultRaviart}, we can write~$g = \nabla^\perp \Psi$
for some $\Psi \in H^1(\Omega)$,
with $\nabla^{\perp}=(-\partial_{2},\partial_{1})^{T}$.
In fact, $\Psi \in C^2(\bar\Omega)$ as seen from the explicit construction
$$
\Psi(x)=-\int_a^x g^\perp\cdot d\vec{R}
$$
where $x$ is any point in $\Omega$,  $a$ is a fixed point in $\Omega$, $g^\perp=(-g_2,g_1)^T$ and
the integral is taken over any curve connecting $a$ and $x$ (parameterized by a vector function $\vec{R}$).
Note that~$g\cdot n = \nabla \Psi \cdot \tau = 0$ on $\partial\Omega$, where~$n$ (resp.~$\tau$) is
the unit vector normal (resp. tangent) to~$\partial\Omega$, so we can choose~$\Psi$
such that~$\Psi(x) = 0$ on~$\partial\Omega$.
We can now pick a cut-off function~$\eta \in C^\infty(\Omega)$ such that~$\eta(x) = 1$ on~$O^{\delta/2}$,
$\eta(x) = 0$ on~$\Omega \setminus O^\delta$, 
and $\Vert \nabla \eta \Vert_{L^\infty} \le \frac{C}{\delta}$,
$\Vert \nabla^2 \eta \Vert_{L^\infty} \le \frac{C}{\delta^2}$. Here and below,
$C$ stands for positive constants independent of~$\delta$.
Now, setting~$v = \nabla^\perp (\eta \Psi)$, we have~$\Div v = 0$, $\supp\,v\subset O^{\delta}$, $v=g$ on~$\partial\Omega$, 
and
  \begin{align*}
    |v|_{H^{1}({\Omega})}  &
    =\|{\nabla}{\nabla}^{{\perp}}({\eta}{\Psi})\|_{L^2({O}^{{\delta}})}
    \\
    &{\leq}\|{\eta}{\nabla}{\nabla}^{{\perp}}{\Psi}\|_{L^2({O}^{{\delta}})}+2\|({\nabla}{\eta})({\nabla}^{{\perp}}{\Psi})\|_{L^2({O}^{{\delta}})}+\|{\Psi}{\nabla}{\nabla}^{{\perp}}{\eta}\|_{L^2({O}^{{\delta}})}\\
    &
    {\le}C\|{\nabla}g\|_{L^2({O}^{{\delta}})}+{\frac{C}{{\delta}}}\|g\|_{L^2({O}^{{\delta}})}+{\frac{C}{{\delta}^{2}}}\|{\Psi}\|_{L^2({O}^{{\delta}})}\\
    &
    {\le}C{\sqrt{{\delta}}}\|{\nabla}g\|_{{L^\infty} ({\Omega} )}+{\frac{C}{{\sqrt{{\delta}}}}}\|g\|_{{L^\infty}({\Omega} )}+{\frac{C}{{\delta}^{3/2}}}\|{\Psi}\|_{{L^\infty}({O}^{{\delta}} )}
  \end{align*}
  since meas$( O^{\delta}) \le C\delta$. We observe now that any
  point $x \in O^{\delta}$ can be connected to a point $y \in \partial
  \Omega$ by a segment of length no greater than $\delta$ lying in $O^\delta$.
  Reminding that $\Psi ( y) = 0$ \ and using the Taylor expansion of order 0
  gives $| \Psi ( x) | \leq \delta | \nabla \Psi ( z) |$ for some point
  $z$ lying on this segment. Thus,
  \[ \| \Psi \|_{{L^\infty} ( O^{\delta})} \leq \delta \| \nabla \Psi \|_{{L^\infty} (
     O^{\delta})} = \delta \| g \|_{{L^\infty} ( \Omega )} \]
  which yields the result.
\end{proof}

We remind also a Poincar\'{e} inequality on the perforated domain.
\begin{lemma}Under Assumption \ref{AssPeriod}, for any $\phi \in
H_{0}^{1}(\Omega^{\varepsilon })$
\begin{equation}
\Vert\phi \Vert_{L^2(\Omega^{\varepsilon })}\leq C\varepsilon |\phi
|_{H^{1}(\Omega^{\varepsilon })}.  \label{Poin}
\end{equation}
 with a constant $C>0$ independent of $\varepsilon$. 
\end{lemma}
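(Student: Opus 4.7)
The plan is to decompose the perforated domain into the periodic interior part, where the Dirichlet condition on the holes kicks in, and the boundary strip, where the Dirichlet condition on $\partial\Omega$ does the job. Concretely, I would write
\[
\Omega^\varepsilon = \bigsqcup_{i\in\mathcal{I}}\mathcal{F}_i \ \cup\ \Omega_b,
\qquad
\Omega_b := \Omega\setminus\bigcup_{i\in\mathcal{I}}Y_i,
\]
so that
\[
\|\phi\|_{L^2(\Omega^\varepsilon)}^2
= \sum_{i\in\mathcal{I}}\|\phi\|_{L^2(\mathcal{F}_i)}^2
+ \|\phi\|_{L^2(\Omega_b)}^2,
\]
and estimate the two contributions separately.

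For the interior contribution, I would invoke the standard Poincar\'e inequality on the reference fluid cell $\mathcal{F}=Y\setminus B$ for functions vanishing on $\partial B$: there is a constant $C_\mathcal{F}$, depending only on the geometry of $B$, such that $\|\psi\|_{L^2(\mathcal{F})} \le C_\mathcal{F}|\psi|_{H^1(\mathcal{F})}$ for every $\psi\in H^1(\mathcal{F})$ with $\psi=0$ on $\partial B$ (existence of such a $C_\mathcal{F}$ follows from the compact embedding $H^1(\mathcal{F})\hookrightarrow L^2(\mathcal{F})$ and a contradiction/compactness argument, using that $\partial B$ has positive surface measure). Rescaling $y\mapsto \varepsilon(y+i)$ and noting that $\phi\in H^1_0(\Omega^\varepsilon)$ vanishes on $\partial B_i$ for every $i\in\mathcal I$, this transfers to each cell as
\[
\|\phi\|_{L^2(\mathcal{F}_i)}\leq C\varepsilon\,|\phi|_{H^1(\mathcal{F}_i)},
\]
and summing over $i\in\mathcal{I}$ gives $\sum_{i}\|\phi\|^2_{L^2(\mathcal{F}_i)}\le C\varepsilon^2|\phi|^2_{H^1(\Omega^\varepsilon)}$.

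For the boundary strip $\Omega_b$, the key observation is that, by the definition of $\mathcal{I}$, every point of $\Omega_b$ lies within distance at most $\sqrt{2}\,\varepsilon$ of $\partial\Omega$ (since any cell $Y_i$ meeting $\Omega_b$ is not fully contained in $\Omega$, hence touches or crosses $\partial\Omega$). Because $\Omega$ is polygonal, I would cover $\Omega_b$ by finitely many strips parallel to the sides of $\partial\Omega$ of width $O(\varepsilon)$ (and small corner regions, handled similarly by a local change of variables). On each such strip, $\phi$ extends by zero across the adjacent side of $\partial\Omega$ (since $\phi\in H^1_0(\Omega^\varepsilon)$ vanishes there), so the one-dimensional Poincar\'e inequality applied along lines perpendicular to $\partial\Omega$ yields
\[
\|\phi\|_{L^2(\Omega_b)} \leq C\varepsilon\,|\phi|_{H^1(\Omega_b)},
\]
with $C$ depending only on the shape of $\Omega$. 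Adding the two contributions gives the claim.

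The one delicate point is step 3: the geometry of $\Omega_b$ near the corners of the polygon. I would handle it by noting that, up to a $\varepsilon$-independent number of corner cells of diameter $O(\varepsilon)$ (to which the same scaled Poincar\'e argument on a reference corner applies, using the Dirichlet trace on the incoming side), the strip decomposes into flat pieces on which the one-dimensional argument is straightforward; everything else is a direct scaling computation. The perforation pattern plays essentially no role in the boundary strip since, by construction, no holes are placed in $\Omega_b$.
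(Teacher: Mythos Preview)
Your argument is correct. The decomposition into the interior fluid cells $\mathcal{F}_i$ (where the scaled reference-cell Poincar\'e applies) and the boundary strip $\Omega_b$ of width $O(\varepsilon)$ (where the one-dimensional Poincar\'e from the Dirichlet trace on $\partial\Omega$ applies) is exactly the standard direct route, and your treatment of the corners is adequate.

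The paper, however, does not reprove this directly. It simply observes that $H^1_0(\Omega^\varepsilon)\subset V_H^{ext}$ (functions with no jumps automatically satisfy all the weighted-average jump conditions) and invokes the more general broken-space Poincar\'e inequality of Lemma~\ref{lem:poincare perforated broken}, which is needed later anyway for the nonconforming MsFEM analysis. The paper does mention your direct route as an alternative, citing \cite{hornung} and \cite[Appendix~A.1]{MsFEMCR2}, and notes that the slightly different convention near $\partial\Omega$ in those references does not affect the argument since the boundary band has width $\sim\varepsilon$ --- precisely the point you handle with your $\Omega_b$ estimate. So your proof is self-contained and more elementary for this particular statement, while the paper's version is a one-line corollary that leverages machinery it has to build regardless.
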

\begin{proof}
This is a corollary of Lemma \ref{lem:poincare perforated broken} proven below. The present lemma can be also proven directly, cf. for example \cite{hornung} or \cite[Appendix A.1]{MsFEMCR2}. The definition of the perforated domain in these references is slightly different from the present article (the perforations are maintained near the boundary) but this does not change essentially the proof, since the band where the perforations are eliminated is of width $\sim\eps$.\qquad
\end{proof}

\textit{Proof of Theorem \ref{HomMain}.} Consider 
\[
    w^{\prime }_{i}=w_{i}-\left|\mathcal{F}\right|\average{w_{i}}
      =w_{i}-\int_Y w_{i}
\]
with $w_{i}$ extended by 0 inside $B$ and observe that $\Div w^{\prime }_i=0$ on $Y$ (in the sense of distributions), $w^{\prime }_i$ is $\mathbb{Z}^2$-periodic and of zero mean over $Y$. Thus (cf.~\cite[p. 6]{Jikov1994}) there exists a $
\mathbb{Z}^2$-periodic function $\psi_{i}$ such that
\begin{equation*}
w_{i}-|\mathcal{F}|\average{w_{i}}=\nabla^{\perp}\psi_{i}\mbox{ on }Y\,.
\end{equation*}
In fact, $\psi_{i}$ can be assumed as smooth on $\mathcal{F}$ as we want, as seen from its explicit construction 
\begin{equation*}
\psi_{i}(x)=\int_{0}^{1}\left(x_{2}[w^{\prime }_{i}]_{1}(tx)-x_{1}[w^{\prime
}_{i}]_{2}(tx)\right)\, dt
\end{equation*}
and the fact that $w_i$ is smooth thanks to our assumptions on perforation $B$.

Assumptions \ref{AssPeriod} also implies that there exists a constant~$c>0$ such that~$O^{\delta}$ with $\delta=c\eps$ does not intersect
the holes~$\cup_{i\in \mathcal I} B_i$ (here, $O^{\delta}$ stands for the band of width $\delta$ near $\partial\Omega$ as in Lemma 
\ref{infsupb}).
Let us choose a cut-off function $\eta^{\varepsilon}\in
C^{\infty}(\bar{\Omega})$ with $\eta^{\varepsilon}=\frac{
\partial\eta^{\varepsilon}}{\partial n}=0$ on $\partial\Omega$, $
\eta^{\varepsilon}(x)=1$ on $\Omega\setminus O^{\delta}$ and 
\begin{equation}\label{etabnd}
\Vert\eta^{\varepsilon}\Vert_{L^{\infty}(\Omega)}=1,\quad\Vert1-\eta^{
\varepsilon}\Vert_{L^2(\Omega)}\leq C\sqrt{\varepsilon},\quad|\eta^{
\varepsilon}|_{H^{1}(\Omega)}\leq\frac{C}{\sqrt{\varepsilon}}
,\quad|\eta^{\varepsilon}|_{H^{2}(\Omega)}\leq\frac{C}{\varepsilon^{3/2}}\,.
\end{equation}

We now consider the expansion of the velocity  of order 3 in~$\varepsilon$  and correct it using the cut-off $\eta^\eps$ 
to take into account the boundary layer:
\begin{equation}
    \label{eq:u star three}
\begin{aligned}
u^{\varepsilon,3}&=
\varepsilon^{2}|\mathcal{F}|\average{w_{i}}(f_{i}-\partial_{i}p^{\ast})
+\varepsilon^{3}\nabla^{\perp}((
\psi_{i})_{\varepsilon}\eta^{\varepsilon})(f_{i}-\partial_{i}p^{\ast})
\\
&\quad+ \varepsilon^{3}(\gamma_{ij})_{\varepsilon}\eta^{\varepsilon}
\partial_{j}(f_{i}-\partial_{i}p^{\ast})
\end{aligned}
\end{equation}
We assume here that both $v_i$ and $\gamma_{ij}$ are extended by 0 inside $B$ so that $u^{\varepsilon,3}$ is well defined 
on the whole of $\Omega$. 
Remind that $\eta^{\varepsilon}=1$ on $\Omega\setminus O^{\delta}$ so that
the expression for $u^{\varepsilon,3}$ simplifies on this portion of $\Omega$ to
\begin{equation}  \label{ueps3s}
u^{\varepsilon,3}
=
u^\ast + \varepsilon^3 u_3
=
\varepsilon^{2}(w_{i})_{\varepsilon}(f_{i}-
\partial_{i}p^{\ast})+\varepsilon^{3}(\gamma_{ij})_{\varepsilon}
\partial_{j}(f_{i}-\partial_{i}p^{\ast}).
\end{equation}
It means in particular that $u^{\varepsilon,3}$ vanishes on the holes $B_i$, $i\in\mathcal{I}$ which are all inside $\Omega\setminus O^{\delta}$.
Let us compute $\Div u^{\varepsilon,3}$ knowing that the divergence of the first term in \eqref{eq:u star three} vanishes by \eqref{darcy}:
\begin{align*}
\Div u^{\varepsilon,3} &=
 \varepsilon^{2}(\nabla^{\perp}\psi_{i})_{\varepsilon}
\eta^{\varepsilon}\cdot
\nabla(f_{i}-\partial_{i}p^{\ast})+\varepsilon^{3}(\psi_{i})_{\varepsilon}
\nabla^{\perp}\eta^{\varepsilon}\cdot\nabla(f_{i}-\partial_{i}p^{\ast}) \\
&\quad + \varepsilon^{2}(\Div\gamma_{ij})_{\varepsilon}\eta^{\varepsilon}
\partial_{j}(f_{i}-\partial_{i}p^{\ast})
\\
&\quad+\varepsilon^{3}(\gamma_{ij})_{
\varepsilon}\cdot(\nabla\eta^{\varepsilon})\partial_{j}(f_{i}-
\partial_{i}p^{\ast})+\varepsilon^{3}(\gamma_{ij})_{\varepsilon}\eta^{
\varepsilon}\cdot\nabla\partial_{j}(f_{i}-\partial_{i}p^{\ast}) 
\end{align*}
Grouping together the terms of order $\eps^2$, using equation (\ref{gammaij}) for $\Div\gamma_{ij}$, and denoting by ${G}_{\varepsilon}$ all the terms of order $\eps^3$
\begin{align*}
{G}_{\varepsilon} &:=\varepsilon^{3}(\psi_{i})_{\varepsilon}\nabla^{\perp}
\eta^{\varepsilon}\cdot\nabla(f_{i}-\partial_{i}p^{\ast})
\\
&\qquad
+\varepsilon^{3}(
\gamma_{ij})_{\varepsilon}\cdot(\nabla\eta^{\varepsilon})\partial_{j}(f_{i}-
\partial_{i}p^{\ast})+\varepsilon^{3}(\gamma_{ij})_{\varepsilon}\eta^{
\varepsilon}\cdot\nabla\partial_{j}(f_{i}-\partial_{i}p^{\ast}) 
\end{align*}
we proceed with the calculation as
\begin{align*}
\Div u^{\varepsilon,3} &= \varepsilon^{2}\eta^{\varepsilon}(w_{i}-|\mathcal{F}|\average{w_{i}}
-(w_{i}-\average{w_{i}}))_{\varepsilon}\cdot\nabla(f_{i}-\partial_{i}p^{
\ast})+{G}_{\varepsilon} \\
&= \varepsilon^{2}\eta^{\varepsilon}|B|\Div(\average{w_{i}}(f_{i}-
\partial_{i}p^{\ast}))+{G}_{\varepsilon} 
={G}_{\varepsilon} \,.
\end{align*}
Note that this equality also holds trivially inside any hole $B_k$, $k\in\Z^2$ since both sides vanish there. 
Thanks to the bounds (\ref{etabnd}), we conclude
\[
\Vert{G}_{\varepsilon}\Vert_{L^2(
\Omega^\varepsilon)} \le C \varepsilon^{\frac{5}{2}}\Vert f-\nabla
p^{\ast}\Vert_{H^{2}(\Omega) \cap C^{1}(\bar\Omega)}\,,
\]
with~$C>0$ independent of~$\varepsilon$.
We also note for future use 
\begin{equation}\label{defg}
u-u^{\varepsilon,3}=g:=-\varepsilon^{2}|\mathcal{F}|\average{w_{i}}(f_{i}-\partial_{i}p^{\ast})
\text{ on }\partial\Omega\,.
\end{equation} 

We now turn to estimates for the residual in \eqref{genP} caused by the
homogenization. One of the technical difficulties consists in the presence of ``virtual'' holes $B_i$ near $\partial\Omega$ that are in fact in the fluid domain $\Omega^\eps$ according to our conventions, cf. Assumption \ref{AssPeriod} and Fig. \ref{fig:domain setup} (the gray hole contours in the periodic cells cut by the boundary $\partial\Omega$). One should thus define properly the cell velocities $w_i$ inside $B_k$. The usual extension by 0, which worked fine in all the previous calculations, does not suffice here because it does not give a twice differentiable function. We thus introduce an extension $\tilde{w}_i$ of $w_i$ from $\mathcal{F}$ to $Y$ such that $\tilde{w}_i=w_i$ on $\mathcal{F}$ and $\tilde{w}_i$ is of class $C^2$ on  $Y$. Now, consider
$$
\tilde{u}^*  = \varepsilon^2 (\tilde w_i)_\varepsilon\left( f_i - \partial_i p^* \right)\,.
$$
Similarly, let $\tilde{\pi}_i$ be an extension of $\pi_i$ from $\mathcal{F}$ to $Y$ such that $\tilde{\pi}_i=\pi_i$ on $\mathcal{F}$ and $\tilde{\pi}_i$ is of class $C^1$ on  $Y$. Introduce the expansion of first order in $\eps$ for the pressure
\begin{equation}
    \label{eq:definition p epsilon one}
\tilde p^{\varepsilon ,1}
=
p^{\ast }+\varepsilon (\tilde\pi_{i})_{\varepsilon
}(f_{i}-\partial_{i}p^{\ast })\,.
\end{equation}
Thus, the residual due to the homogenization in eq. (\ref{genP}) is given everywhere on $\Omega^\eps$ by
\begin{equation}
    \label{eq:stokes expansion residual}
\begin{aligned}
F_{\varepsilon }&:=-\Delta (u-\tilde u^{\ast })+\nabla (p-\tilde p^{\varepsilon ,1})
\\
&=f+(\Delta \tilde w_{i}-\nabla \tilde\pi_{i})_{\varepsilon }(f_{i}-\partial_{i}p^{\ast})
\\
&\quad+2\varepsilon (\nabla \tilde w_{i})_{\varepsilon }\nabla (f_{i}-\partial
_{i}p^{\ast })
+\varepsilon ^{2}(\tilde w_{i})_{\varepsilon }\Delta (f_{i}-\partial_{i}p^{\ast}) \\
&\quad-\nabla p^{\ast }-\varepsilon (\tilde\pi_{i})_{\varepsilon }\nabla
(f_{i}-\partial_{i}p^{\ast }). \\
&
\end{aligned}
\end{equation}
Rearranging the terms yields
\begin{align*}
F_{\varepsilon }&=2\varepsilon (\nabla \tilde w_{i})_{\varepsilon }\nabla
(f_{i}-\partial_{i}p^{\ast })+\varepsilon ^{2}(\tilde w_{i})_{\varepsilon }\Delta
(f_{i}-\partial_{i}p^{\ast })-\varepsilon (\tilde\pi_{i})_{\varepsilon }\nabla
(f_{i}-\partial_{i}p^{\ast })\\
&+(\Delta \tilde w_{i}-\nabla \tilde\pi_{i}+e_i)_{\varepsilon }(f_{i}-\partial_{i}p^{\ast})
\end{align*}
The terms in the first line above are of order $\eps$ or higher. The terms in the second line are of order 1, but they vanish in fact at all the fluid cells $\mathcal{F}_i$, $i\in\mathcal{I}$. Since the measure of the remaining part $\Omega^\eps\setminus\cup_{i\in\mathcal{I}}\mathcal{F}_i$ is of order $\eps$, we get
\begin{equation}
\Vert F_{\varepsilon }\Vert_{L^2(\Omega^{\varepsilon })}\le C
\sqrt{\varepsilon} \Vert f-\nabla p^{\ast }\Vert_{\espHC}.
\label{eq:homogenization partial residual}
\end{equation}

We summarize all the derived bounds as follows: the functions $u-\tilde u^{\ast
},u-u^{\varepsilon,3}\in H^{1}(\Omega^{\varepsilon })^{2}$ and $
p-\tilde p^{\varepsilon ,1}\in L^2(\Omega^{\varepsilon })$ satisfy
\begin{alignat}{2}
-\Delta (u-\tilde u^{\ast})+\nabla (p-\tilde p^{\varepsilon ,1}) &=F_{\varepsilon }
&\quad&\text{ on }\Omega^{\varepsilon }
\label{StErr} \\
\Div(u-u^{\varepsilon,3}) &={G}_{\varepsilon } &&\text{ on }\Omega
_{\varepsilon }
\notag \\
u-u^{\varepsilon,3} &=0 && \text{ on }\partial B^{\varepsilon }
\notag \\
u-u^{\varepsilon,3} &=g && \text{ on }\partial \Omega
\notag
\end{alignat}
Apart from the difference between $\tilde u^*$ and $u^{\varepsilon,3}$, this is a
Stokes system and we proceed with bounding the norms of its solution in the
standard manner, \emph{cf.} \cite{GiraultRaviart}, using the inf-sup Lemmas 
\ref{infsupq} and \ref{infsupb}. Indeed, Lemma~\ref{infsupq} assures that there
exists $v_{p}\in H_{0}^{1}(\Omega^{\varepsilon })^{2}$ such that
\begin{equation*}
\Div v_{p}={G}_{\varepsilon }\mbox{ and }|v_{p}|_{H^{1}(\Omega^{\varepsilon
})}\leq \frac{C}{\varepsilon }\Vert{G}_{\varepsilon }\Vert_{L^2(\Omega)}\leq
C\varepsilon ^{\frac{3}{2}}\Vert f-\nabla p^{\ast }\Vert_{H^{2}(\Omega) \cap C^{1}(\bar\Omega)}\,.
\end{equation*}
Recall that $O^\delta$ with $\delta=c\varepsilon$ as introduced above, does not intersect $B^\eps$. Then, in view of the definition of $g$ (\ref{defg}) and equations \eqref{darcy}--\eqref{DarcyB}, Lemma~\ref{infsupb} assures that there exists $v_{b}\in H^{1}(\Omega^{\varepsilon })^{2}$ supported in $O^\delta$ and thus
vanishing on $B^\eps$ such that $\Div v_{b}=0$ on $\Omega^{\varepsilon }$,
\begin{equation*}
v_{b}=g\mbox{ on }\partial \Omega \mbox{ and }|v_{b}|_{H^{1}(\Omega
_{\varepsilon })}\leq \frac{C}{\sqrt{\varepsilon }}\Vert g\Vert_{C^{1}(\bar\Omega)}
\leq C\varepsilon ^{\frac{3}{2}}\Vert f-\nabla p^{\ast
}\Vert_{C^{1}(\bar\Omega)}
\end{equation*}

Set $v=u-u^{\varepsilon,3}-v_{p}-v_{b}$ and observe that $v\in H_{0}^{1}(\Omega
_{\varepsilon })^{2}$ and $\Div v=0$ on $\Omega^{\varepsilon }$.
Multiplying \eqref{StErr} by $v$ and integrating over $\Omega^{\varepsilon }$
by parts yields
\begin{equation*}
\int_{\Omega^{\varepsilon }}\nabla (u-\tilde u^{\ast }):\nabla v=\int_{\Omega
^{\varepsilon }}F_{\varepsilon }\cdot v\leq \Vert F_{\varepsilon
}\Vert_{L^2(\Omega)}\Vert v\Vert_{L^2(\Omega^{\varepsilon })}\leq C\varepsilon
\Vert F_{\varepsilon }\Vert_{L^2(\Omega)}|v|_{H^{1}(\Omega^{\varepsilon })}\,.
\end{equation*}
We have used here Poincar\'{e} inequality (\ref{Poin}) with $\phi=v$.  
Thus,
\begin{align*}
|v|_{H^{1}(\Omega^{\varepsilon })}^{2}& \leq C\varepsilon \Vert F_{\varepsilon
}\Vert_{L^2(\Omega)}|v|_{H^{1}(\Omega^{\varepsilon })}-\int_{\Omega
_{\varepsilon }}\nabla (u^{\varepsilon,3}+v_{p}+v_{b}-u^*):\nabla v \\
& \leq \left( C\varepsilon \Vert F_{\varepsilon }\Vert_{L^2(\Omega)}
+|\tilde u^{\ast}-u^{\varepsilon,3}|_{H^{1}}
+|v_{p}|_{H^{1}}+|v_{b}|_{H^{1}}\right) |v|_{H^{1}(\Omega^{\varepsilon
})}
\\
&\leq C\varepsilon ^{\frac{3}{2}}\Vert f-\nabla p^{\ast }\Vert_{H^{2}(\Omega) \cap C^{1}(\bar\Omega)}|v|_{H^{1}(\Omega^{\varepsilon })}\,,
\end{align*}
which follows from (\ref{eq:homogenization partial residual}), the above estimates on $v_p$ and $v_b$ and from the explicit expression
\begin{multline*}
\tilde u^{\ast}-u^{\varepsilon,3}=
\varepsilon^2 (\tilde w_i -w_i)_\varepsilon\left( f_i - \partial_i p^* \right)\\
-\varepsilon^{3}(\psi_{i})_{\varepsilon
}(\nabla ^{\perp}\eta ^{\varepsilon })(f_{i}-\partial_{i}p^{\ast })
+\varepsilon ^{3}(\gamma_{ij})_{\varepsilon }(1-\eta ^{\varepsilon
})\partial_{j}(f_{i}-\partial_{i}p^{\ast })\,,
\end{multline*}
which easily entails $|\tilde u^{\ast}-u^{\varepsilon,3}|_{H^{1}(\Omega^\eps)}
\leq C\varepsilon^2\Vert f-\nabla p^{\ast}\Vert_{H^{1}(\Omega)}$. This proves
\[
|v|_{H^{1}(\Omega_{\varepsilon })}\leq C\varepsilon ^{\frac{3}{2}}\Vert f-\nabla p^{\ast
}\Vert_{H^{2}(\Omega) \cap C^{1}(\bar\Omega)}\,,
\]
and consequently \eqref{errH1} by the triangle inequality. The $L^2$ estimate \eqref{errL2}
follows thanks to \eqref{Poin}. 

To prove the remaining estimate for
pressure \eqref{errP}, we  take $v\in H_{0}^{1}(\Omega^{\varepsilon })$ such that $\Div 
v=p-\tilde p^{\varepsilon ,1}$ as constructed in Lemma~\ref{infsupq}, multiply \eqref{StErr} by $v$ and integrate by parts 
\begin{multline*}
\int_{\Omega^{\varepsilon }}(p-\tilde p^{\varepsilon ,1})^2=
\int_{\Omega^{\varepsilon }}(p-\tilde p^{\varepsilon ,1})\Div v=\int_{\Omega
^{\varepsilon }}F_{\varepsilon }\cdot v-\int_{\Omega^{\varepsilon }}\nabla
(u-\tilde u^{\ast }):\nabla v
\\
\leq C\varepsilon ^{\frac{3}{2}}\Vert f-\nabla p^{\ast }\Vert_{H^{2}(\Omega) \cap C^{1}(\bar\Omega)}|v|_{H^{1}(\Omega ^{\varepsilon })}
\leq C\varepsilon ^{\frac{1}{2}} \Vert f-\nabla p^{\ast }\Vert_{H^{2}(\Omega) \cap C^{1}(\bar\Omega)}\Vert p-\tilde p^{\varepsilon ,1} \Vert_{L^2(\Omega^\varepsilon)}
\end{multline*}
using the estimate in Lemma~\ref{infsupq}. Thus, by triangle inequality,
\[
    \Vert p-p^\ast \Vert_{L^2(\Omega^\varepsilon)} \le
    \Vert p-\tilde p^{\varepsilon ,1} \Vert_{L^2(\Omega^\varepsilon)} +
    \Vert \tilde p^{\varepsilon ,1}-p^\ast \Vert_{L^2(\Omega^\varepsilon)} \le
    C\varepsilon ^{\frac{1}{2}}\Vert f-\nabla
    p^{\ast }\Vert_{H^{2}(\Omega) \cap C^{1}(\bar\Omega)}
\]
since~$(\tilde p^{\varepsilon,1} - p^*)$ term is of order $\eps$ as seen from (\ref{eq:definition p epsilon one}).
\qquad
\endproof

%%%%%%%%%%%%%%%%%%%%%%%%%%%%%%%%%%%%%%%%%%%%%%%%%%%%%%%%%%%%%
\section{Technical lemmas}
\label{sec:preliminaries}
We assume implicitly in this section that mesh $\TH$ is quasi-uniform, as described in the beginning of Section \ref{sec:crmsfem} and that Assumptions \ref{AssWeights2}-\ref{AssWeights3} and \ref{AssPeriod} are valid.  The weights $w_i$ are assumed to be chosen as in (\ref{CR3}), i.e. we only study the CR3 variant of the method.

\subsection{Some lemmas borrowed from the usual finite element theory}
\label{ssec:ingredients}
\quad

\begin{lemma}\label{lem:traceH}
For all~$T\in \mathcal T_H$, all the edges $E \subset \partial T$ and all $v\in H^{1}(T)$
\begin{equation}
\|v\|_{L^2(E)}^{2}\leq
C\left(H^{-1}\|v\|_{L^2(T)}^{2}+H\|\nabla v\|_{L^2(T)}^{2}\right)\,.
\label{eq:trace1}
\end{equation}
\end{lemma}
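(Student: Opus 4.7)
The plan is to reduce the inequality to the reference element $\bar T$ (fixed by the mesh regularity assumption) via the mapping $K:\bar T\to T$ and to track the powers of $H$ appearing in the change of variables. This is the standard scaled trace inequality of finite element theory, so no deep ideas are required — the only nuance is keeping the scaling of the edge- and bulk-integrals consistent.

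First I would establish the unscaled trace inequality on the reference element: for every $\hat v\in H^1(\bar T)$ and every edge $\hat E\subset\partial\bar T$,
\begin{equation*}
\|\hat v\|_{L^2(\hat E)}^2 \le C_{\bar T}\bigl(\|\hat v\|_{L^2(\bar T)}^2+\|\hat\nabla\hat v\|_{L^2(\bar T)}^2\bigr),
\end{equation*}
which is just the continuity of the trace operator $H^1(\bar T)\hookrightarrow L^2(\partial\bar T)$ combined with the fact that $\bar T$ is fixed (if a finite collection of reference elements is used, take the maximum of the corresponding constants).

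Next I would pull $v$ back to the reference element by setting $\hat v=v\circ K$. The quasi-uniformity hypothesis gives $\|\nabla K\|_{L^\infty}\le CH$ and $\|\nabla K^{-1}\|_{L^\infty}\le CH^{-1}$, and since $K$ is affine on each edge of $\partial\bar T$, the Jacobian of $K|_{\hat E}$ behaves like $H$. Therefore the changes of variables produce
\begin{align*}
\|v\|_{L^2(T)}^2 &\;\ge\; cH^{2}\|\hat v\|_{L^2(\bar T)}^2,\\
\|\nabla v\|_{L^2(T)}^2 &\;\ge\; c\|\hat\nabla\hat v\|_{L^2(\bar T)}^2,\\
\|v\|_{L^2(E)}^2 &\;\le\; CH\,\|\hat v\|_{L^2(\hat E)}^2,
\end{align*}
where the $H^{-2}$ from $\nabla K^{-T}$ in the gradient estimate is compensated by the $H^2$ from $|\det\nabla K|$. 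Plugging the reference inequality into the last bound yields
\begin{equation*}
\|v\|_{L^2(E)}^2 \le CH\bigl(H^{-2}\|v\|_{L^2(T)}^2+\|\nabla v\|_{L^2(T)}^2\bigr)
= C\bigl(H^{-1}\|v\|_{L^2(T)}^2+H\|\nabla v\|_{L^2(T)}^2\bigr),
\end{equation*}
which is the claimed inequality with a constant depending only on the reference element and the mesh regularity parameter.

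There is no real obstacle here; the only thing to be careful about is matching the two different Jacobians (bulk vs.\ edge) so that the final powers of $H$ come out correctly. No compactness or Poincaré-type argument is needed, and the perforation $B^\eps$ plays no role whatsoever because the statement concerns a generic $v\in H^1(T)$.
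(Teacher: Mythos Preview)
Your argument is correct and is exactly the standard ``trace inequality on the reference element plus scaling'' that the paper invokes (it merely cites this as the standard trace inequality properly scaled to a domain of diameter $\sim H$, referring to \cite[Section~4.2]{MsFEMCR1}). You have simply written out the details the paper omits.
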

\begin{proof}
  This is the standard trace inequality properly scaled to a domain of diameter $\sim H$, cf. \cite[Section 4.2]{MsFEMCR1}).
  \qquad
\end{proof}

\begin{lemma}\label{lem:P0_EF}
Let $\Pi_{H}$ be the $L^2(\Omega)$-orthogonal
projection on the space of piecewise constant functions on $\TH$. For any $f\in H^1(\Omega)$
\begin{equation}
\Vert f-\Pi_{H}f\Vert_{L^2(\Omega)}\leq CH |f|_{H^1(\Omega)} 
\label{eq:P0_EF}
\end{equation}
with a constant $C>0$ depending only on the regularity of $\TH$.
\end{lemma}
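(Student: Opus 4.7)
The plan is to reduce the claim to a local element-by-element estimate and then apply the standard Poincar\'e--Wirtinger inequality on the reference element, transporting it to the physical element $T$ via the scaling mapping $K$ provided by the quasi-uniformity assumption made at the beginning of Section~\ref{sec:crmsfem}.

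First I would observe that on any $T\in\TH$, the $L^2(T)$-orthogonal projection onto constants is the mean value $\bar f_T := \frac{1}{|T|}\int_T f$, and consequently $\Pi_H f |_T = \bar f_T$ realizes the $L^2(T)$-best constant approximation, i.e.
\[
\|f-\Pi_H f\|_{L^2(T)} = \min_{c\in\R}\|f-c\|_{L^2(T)} \le \|f-\bar f_T\|_{L^2(T)}.
\]
So it suffices to bound $\|f-\bar f_T\|_{L^2(T)}$ by $CH |f|_{H^1(T)}$ on each $T$, and then square and sum over $T\in\TH$.

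To obtain the local bound with the correct $H$-dependence, I would pull back to the reference element $\bar T$ via the diffeomorphism $K:\bar T\to T$. Setting $g = f\circ K \in H^1(\bar T)$, the classical Poincar\'e--Wirtinger inequality on $\bar T$ yields
\[
\|g-\bar g\|_{L^2(\bar T)} \le C_{\bar T} |g|_{H^1(\bar T)},
\]
where $\bar g$ is the mean of $g$ on $\bar T$, and $C_{\bar T}$ depends only on the (fixed) reference element. The scaling estimates $\|\nabla K\|_{L^\infty}\le CH$ and $\|\nabla K^{-1}\|_{L^\infty}\le CH^{-1}$ give $|\det \nabla K|\sim H^2$ in two dimensions, so the usual change-of-variable computation yields $\|f-\bar f_T\|_{L^2(T)}^2 \sim H^2 \|g-\bar g\|_{L^2(\bar T)}^2$ and $|g|_{H^1(\bar T)}^2 \le C |f|_{H^1(T)}^2$, where in the latter we use $\nabla g = (\nabla K)^T (\nabla f)\circ K$ together with the scaling of the Jacobian. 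Combining these,
\[
\|f-\bar f_T\|_{L^2(T)} \le C H |f|_{H^1(T)}.
\]

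Finally, I would square, sum over $T\in\TH$, and use the splitting convention $|f|_{H^1(\Omega)}^2 = \sum_{T\in\TH}|f|_{H^1(T)}^2$ for the broken norm to conclude
\[
\|f-\Pi_H f\|_{L^2(\Omega)}^2 = \sum_{T\in\TH}\|f-\bar f_T\|_{L^2(T)}^2 \le C H^2 \sum_{T\in\TH}|f|_{H^1(T)}^2 = C H^2 |f|_{H^1(\Omega)}^2.
\]
There is no significant obstacle here; the only thing worth mentioning is that the constant $C$ ultimately depends only on the Poincar\'e--Wirtinger constant of the reference element $\bar T$ and on the universal constants in the bounds for $\nabla K$ and $\nabla K^{-1}$, which is exactly what is meant by ``depending only on the regularity of $\TH$''.
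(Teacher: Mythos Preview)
Your proof is correct and follows exactly the approach sketched in the paper, which states only that ``It is proven by a Poincar\'e inequality on the reference element and scaling.'' One cosmetic remark: when $K$ is not affine the pulled-back mean $\bar g$ need not equal $\bar f_T$, so the relation $\|f-\bar f_T\|_{L^2(T)}^2 \sim H^2\|g-\bar g\|_{L^2(\bar T)}^2$ should be stated as an inequality (which is all you need, since $\bar f_T$ is the $L^2$-best constant on $T$).
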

\begin{proof}
This is a standard finite element interpolation result. It is proven by a Poincar\'e inequality on the reference element and scaling.
\end{proof}

\begin{lemma}\label{lem:P1_EF} There exists a bounded linear operator~$I_H: H^2(\Omega) \to H^1(\Omega)$ such that $I_Hv$ is a polynomial of degree $\le 1$ on any edge $E\in\mathcal{E}_H$ for any $v\in H^2(\Omega)$ and 
    \begin{equation*}\label{eq:P1_EF}
    |I_H v-v|_{H^{1}(\Omega)}\leq CH|v|_{H^{2}(\Omega)}
    \end{equation*}
and    
    \begin{equation*}\label{eq:P1_EFl2}
    \|I_H v-v\|_{L^{2}(\Omega)}\leq CH|v|_{H^{1}(\Omega)}
    \end{equation*}
with a constant $C>0$ depending only on the regularity of $\TH$.
\end{lemma}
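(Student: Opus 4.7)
My plan is to construct $I_H$ as a standard quasi-interpolation operator (Scott--Zhang type) onto the continuous, piecewise-affine finite element space naturally attached to $\mathcal{T}_H$. On the reference cell $\overline{T}$, this space is $P_1$ when $\overline{T}$ is a triangle and $Q_1$ when it is a quadrangle, pulled back to $T$ via the mapping $K: \overline{T}\to T$. Since $K$ is assumed to be affine on each edge of $\partial\overline{T}$, the restriction to any edge $E\in\mathcal{E}_H$ of any function in this space is an affine polynomial, which is exactly the degree-$\le 1$ property required. For more general polygonal meshes one can use a VEM-style space, which by design consists of functions whose trace on every edge is affine; the argument below carries over unchanged.

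With the finite element space fixed, I would define $I_H$ as a Scott--Zhang operator: for each nodal basis function $\phi_i$, associated to a vertex $x_i$, pick an edge $\sigma_i\subset\mathcal{E}_H$ containing $x_i$, let $\psi_i\in L^2(\sigma_i)$ be the $L^2(\sigma_i)$-dual of $\phi_i|_{\sigma_i}$ in the space of affine polynomials on $\sigma_i$, and set $I_H v=\sum_i \phi_i \int_{\sigma_i}\psi_i v$. This is well defined for $v\in H^1(\Omega)\supset H^2(\Omega)$ via the trace theorem on edges, so $I_H$ is continuous from $H^1(\Omega)$ into the finite element space, and hence from $H^2(\Omega)$ into $H^1(\Omega)$. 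By the standard Scott--Zhang construction $I_H$ reproduces affine polynomials locally, i.e.\ on the patch $\widetilde{T}=\bigcup\{T'\in\mathcal{T}_H : T'\cap T\neq\emptyset\}$ one has $I_H p=p$ for any polynomial $p$ of degree $\le 1$, and $I_H$ is locally $L^2$- and $H^1$-stable uniformly in $H$ after rescaling to the reference element.

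The two estimates then follow from the Bramble--Hilbert lemma in the textbook way. Fix $T\in\mathcal{T}_H$; for any $p$ of degree $\le 1$, using $I_Hp=p$ on $\widetilde T$,
\begin{equation*}
|I_H v-v|_{H^1(T)}
\le |I_H(v-p)|_{H^1(T)}+|v-p|_{H^1(T)}
\le C\,|v-p|_{H^1(\widetilde T)},
\end{equation*}
and similarly $\|I_H v-v\|_{L^2(T)}\le C(\|v-p\|_{L^2(\widetilde T)}+H|v-p|_{H^1(\widetilde T)})$, by local stability and the scalings $\|\nabla K\|_{L^\infty}\sim H$, $\|\nabla K^{-1}\|_{L^\infty}\sim H^{-1}$. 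Choosing $p$ to be the best affine approximation of $v$ on $\widetilde T$ (Bramble--Hilbert), we get $|v-p|_{H^1(\widetilde T)}\le CH|v|_{H^2(\widetilde T)}$ for the first bound, and for the second we choose $p$ to be the best constant approximation, yielding $\|v-p\|_{L^2(\widetilde T)}\le CH|v|_{H^1(\widetilde T)}$ and $|v-p|_{H^1(\widetilde T)}=|v|_{H^1(\widetilde T)}$. Squaring and summing over $T\in\mathcal{T}_H$ and using the fact that each $T'$ belongs to a uniformly bounded number of patches $\widetilde{T}$ (thanks to quasi-uniformity) produces the two claimed global estimates.

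The only mildly delicate point is ensuring the trace-on-edge property for general reference cells; this is why the hypothesis that $K$ be affine on $\partial\overline{T}$ (stated at the beginning of Section~\ref{sec:crmsfem}) is needed, and why in the polygonal/VEM case one must choose a finite element space whose edge traces are, by definition, affine. Once this is taken care of, everything else is a direct application of standard finite element interpolation theory.
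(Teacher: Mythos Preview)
Your argument is correct and follows the same standard quasi-interpolation route as the paper (local polynomial preservation plus Bramble--Hilbert). The only real difference is in how general polygonal cells are handled. You build $I_H$ directly on the natural finite element space for each reference cell ($P_1$, $Q_1$, or a VEM-type space), relying on the affine-on-edge hypothesis for $K$ to guarantee affine traces. The paper instead avoids case analysis by triangulating the reference element $\overline{T}$ without adding nodes on $\partial\overline{T}$, pushing forward by $K$ to obtain a triangular submesh $\widehat{\mathcal T}_H$ of $\mathcal T_H$, and then taking $I_H$ to be the usual Cl\'ement operator on $P_1(\widehat{\mathcal T}_H)$. Since each edge $E\in\mathcal E_H$ is still a single edge of $\widehat{\mathcal T}_H$, the trace of any $P_1$ function on $E$ is automatically affine. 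This buys a uniform construction that works for arbitrary polygonal reference cells without invoking $Q_1$ or virtual element machinery; your version is equally valid but requires the separate observation that the chosen space has affine edge traces in each case.
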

\begin{proof}
One can simply take $I_H$ as the usual Cl\'ement interpolation operator on $P_1$ finite elements if $\TH$ is a triangular mesh. Otherwise,
we consider~$\widehat{\mathcal T}_H$ a submesh of~$\mathcal T_H$ which consists
of triangles only. To construct~$\widehat{\mathcal T}_H$, one only needs
to remesh the reference element~$\overline T$ in triangles, without adding nodes on~$\partial\overline T$. 
Applying the mapping~$K$ on each element of $\TH$ one obtains then $\widehat{\mathcal T}_H$. We can now define $I_H$ as the Cl\'ement interpolation operator on $P_1$ finite elements on $\widehat{\mathcal T}_H$. \qquad
\end{proof}

\subsection{Lemmas related to perforated domains and oscillating functions}

\begin{lemma}
\label{lem:tracp} Suppose $H\ge\gamma\varepsilon$ with some big enough~$\gamma$.
Let $T\subset {\mathcal{T}_{H}}$ and take any $v\in H^{1}(T)$
vanishing on $B^\eps\cap T$. Then,
\begin{equation}
\Vert v\Vert_{L^2(\partial T)}\le C\sqrt{\varepsilon}|v|_{H^{1}(T)}\,.
\label{eq:tracp}
\end{equation}
The constants $\gamma>0$ and $C>0$ here depend only on the regularity
of mesh ${\mathcal{T}_{H}}$ and on the perforation pattern $B$.
\end{lemma}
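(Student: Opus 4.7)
The idea is to localize the trace on $\partial T$ to the periodic cells of size $\varepsilon$ that meet $\partial T$, and then combine, on each such cell, a scaled trace inequality with a Poincar\'e inequality that uses the vanishing of $v$ on the hole. Let $\mathcal{I}(T)=\{i\in\mathbb{Z}^2:\overline{Y_i}\cap\partial T\neq\emptyset\}$, so that the boundary $\partial T$ is covered by $\bigcup_{i\in\mathcal{I}(T)}\overline{Y_i}$. Since $T$ is quasi-uniform of diameter $\lesssim H$ and each $Y_i$ has diameter $\sqrt{2}\varepsilon$, we have $\#\mathcal{I}(T)\le C\,H/\varepsilon$, and the union of these cells is contained in an $O(\varepsilon)$-neighborhood of $\partial T$.

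On each reference cell $Y$, the standard trace inequality $\|\hat v\|_{L^2(\hat\ell)}^2\le C(\|\hat v\|_{L^2(Y)}^2+\|\nabla\hat v\|_{L^2(Y)}^2)$ holds for every line segment $\hat\ell\subset\overline Y$ and every $\hat v\in H^1(Y)$. Rescaling to $Y_i$ (of size $\varepsilon$) with $\ell_i:=\partial T\cap\overline{Y_i}$ yields
\begin{equation*}
\|v\|_{L^2(\ell_i)}^2\le C\bigl(\varepsilon^{-1}\|v\|_{L^2(Y_i\cap T)}^2+\varepsilon\,\|\nabla v\|_{L^2(Y_i\cap T)}^2\bigr),
\end{equation*}
where $v$ is understood on $Y_i\cap T$ and (after a standard Stein-type extension across the Lipschitz boundary of $Y_i\cap T$, whose norm is controlled by the mesh regularity) one may replace $Y_i\cap T$ by $Y_i$ itself at the cost of a constant.

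Next, because the perforation pattern is periodic and $\mathrm{dist}(\partial B,\partial Y)>0$, the assumption $H\ge\gamma\varepsilon$ with $\gamma$ large enough guarantees that every cell $Y_i$ meeting $\partial T$ lies in $\bigcup_{j\in\mathcal{I}}\overline{Y_j}$, hence contains a hole $B_j$ of diameter $\sim\varepsilon$ inside an $\varepsilon$-neighborhood, and the function $v$ vanishes there. A scaled Poincar\'e inequality on a cluster of at most a few adjacent cells (the reference Poincar\'e constant on $Y\setminus\overline B$ for functions vanishing on $\partial B$ being finite) then gives
\begin{equation*}
\|v\|_{L^2(Y_i)}^2\le C\varepsilon^{2}\,\|\nabla v\|_{L^2(\widetilde Y_i)}^2,
\end{equation*}
where $\widetilde Y_i$ is the above cluster. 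Substituting into the trace bound kills the $\varepsilon^{-1}$ factor, leaving $\|v\|_{L^2(\ell_i)}^2\le C\varepsilon\,\|\nabla v\|_{L^2(\widetilde Y_i)}^2$. Summing over $i\in\mathcal{I}(T)$, and using the finite overlap of the clusters $\widetilde Y_i$ together with the fact that they all lie in an $O(\varepsilon)$-neighborhood of $\partial T$ inside $T$, one concludes
\begin{equation*}
\|v\|_{L^2(\partial T)}^2=\sum_{i\in\mathcal{I}(T)}\|v\|_{L^2(\ell_i)}^2\le C\varepsilon\,|v|_{H^1(T)}^2,
\end{equation*}
which is precisely \eqref{eq:tracp}.

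\paragraph{Main obstacle.} The one genuinely delicate point is the treatment of cells $Y_i$ that straddle $\partial T$ or are only partially contained in $T$, particularly near a corner of $T$ or near $\partial\Omega$ where a cell may not belong to $\mathcal{I}$ and so carries no hole: there, Poincar\'e on $Y_i$ alone is unavailable, and one must chain to a neighboring perforated cell. This is what forces the smallness condition $H\ge\gamma\varepsilon$ with $\gamma$ sufficiently large (depending only on the mesh regularity and on $\mathrm{dist}(\partial B,\partial Y)$) — it guarantees that every $Y_i$ touching $\partial T$ has a perforated neighbour entirely inside $T$, so that a short chain of cells suffices to transfer the vanishing condition on $v$ to the whole cluster, with a constant depending only on the perforation pattern $B$.
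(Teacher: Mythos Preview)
Your strategy --- cover $\partial T$ by $\varepsilon$-cells, combine a scaled trace inequality with a Poincar\'e inequality anchored at a nearby hole, then sum --- is sound in principle and genuinely different from the paper's route. The paper does not use the periodic cells at all: it partitions $\partial T$ into short segments $S^k$ of length $O(\varepsilon)$, associates to each $S^k$ a disc of radius $c_1\varepsilon$ lying inside a hole $B_j\subset T$, and integrates along rays in polar coordinates centred at that disc, using $v(r_0,\theta)=0$ and the fundamental theorem of calculus to obtain $\int_{S^k} v^2 \le C\varepsilon \int_{D^k}|\nabla v|^2$ on an explicit sector $D^k\subset T$; the sectors are disjoint by construction, so summation is immediate. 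This direct computation avoids any extension and any appeal to abstract trace/Poincar\'e constants.

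Where your argument is loose is precisely the step you flag but do not quite resolve: for cells $Y_i$ that straddle $\partial T$, you write the scaled trace bound with $Y_i\cap T$ on the right and then say a Stein extension from $Y_i\cap T$ to $Y_i$ has norm ``controlled by the mesh regularity''. That is not clear --- $Y_i\cap T$ can be an arbitrarily thin sliver near a vertex of $T$, and neither the trace constant on $Y_i\cap T$ nor the Stein extension constant from $Y_i\cap T$ is uniform in such geometry. The clean fix is to extend $v$ once from $T$ to a neighbourhood of $T$ by reflection across the (straight) edges of $T$; this preserves the $H^1$ seminorm with a constant depending only on the mesh regularity, the extended function still vanishes on every hole $B_j\subset T$, and you can then run the trace and Poincar\'e steps on full cells $Y_i$ and clusters $\widetilde Y_i$ without ever touching $Y_i\cap T$. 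With that adjustment your argument goes through; what it buys over the paper's is modularity (no explicit polar computation, and the hole need not contain a disc), while the paper's argument buys simplicity --- no extension, no overlap bookkeeping, and the sectors $D^k$ are manifestly inside $T$ and pairwise disjoint.
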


\begin{proof}
\begin{figure}[tbp]
\centering
\begin{tikzpicture}[scale=2]
    \draw[gray!50,step=0.25cm] (-2.1,-1.7) grid (2.1,1.7);

    %\foreach \x in {-3,...,3}
        %\foreach \y in {-3,...,3}
        %\draw[gray!50] (0.25*\x + 0.125,0.25*\y + 0.125) circle (0.05);

    \draw (2,0) -- (1.5,1.5) -- (0.7,1.6) -- (-0.7,1.4) -- (-2,0.55) -- (-1.5,-1)
    -- (-0.3,-1.5) --
    (1,-1) -- (2,0) node[right] {$\Omega^\varepsilon$};

    \coordinate (A) at (1,0);
    \coordinate (B) at (-0.55,1);
    \coordinate (C) at (-0.75,-0.85);

    \pgfmathsetmacro{\foo}{0.85/0.25/7};

    \coordinate (M1) at ($(C) + (0.25,0.25*3/7)$);
    \coordinate (M2) at ($(C) + (2*0.25,2*0.25*3/7)$);
    \coordinate (M3) at ($(C) + (3*0.25,3*0.25*3/7)$);

    \foreach \i in {0,...,7} {
        \pgfmathsetmacro{\pxx}{\i*0.25};
        \pgfmathsetmacro{\pyy}{\i*0.25*\foo};
        \coordinate (M\i) at ($(C) + (\pxx,\pyy)$);
        \draw (M\i) circle (0.01);
    }

    \begin{scope}[every node/.style={scale=0.8}]
    \foreach \i in {0,...,6} {
        \pgfmathsetmacro{\ip}{\i+1};
        \draw (M\i) -- node[below] {$S^\i$} (M\ip);
    }
    \draw node[right] at (0.181+0.25*3,0.125) {$S^7$};
    \draw node[label={[rotate=-35]$\dots$}] at (0.25+0.25*2,0.25-0.063) {};
    \draw node[label={[rotate=-10]$\vdots$}] at (0.15-0.25*4,0.025-2*0.25) {};
    \draw node at (0.125-0.25*4,0.125-3*0.25) {$S^n$};
    \end{scope}

    \coordinate (C1) at ($(C) + (0.125, 0.225)$);
    \coordinate (C2) at ($(C) + (0.125+1*0.25, 0.225+1*0.25)$);
    \coordinate (C3) at ($(C) + (0.125+2*0.25, 0.225+1*0.25)$);
    \coordinate (C4) at ($(C) + (0.125+3*0.25, 0.225+2*0.25)$);
    \coordinate (C5) at ($(C) + (0.125+4*0.25, 0.225+2*0.25)$);
    \coordinate (C6) at ($(C) + (0.125+4*0.25, 0.225+3*0.25)$);
    \coordinate (C7) at ($(C) + (0.125+5*0.25, 0.225+3*0.25)$);

    \draw[pattern=north east lines] (M0) -- (C1) -- (M1);
    \draw[pattern=north east lines] (M1) -- (C2) -- (M2);
    \draw[pattern=north east lines] (M2) -- (C3) -- (M3);
    \draw[pattern=north east lines] (M3) -- (C4) -- (M4);
    \draw[pattern=north east lines] (M4) -- (C5) -- (M5);
    \draw[pattern=north east lines] (M5) -- (C6) -- (M6);
    \draw[pattern=north east lines] (M6) -- (C7) -- (M7);

    \foreach \i in {1,...,7}
        \draw[color=white,fill=white] (C\i) circle (0.05);

        \draw (A) -- (B) node[above left] {$T$} -- (C) -- (A);

    \begin{scope}
        \clip (A) -- (B) -- (C) -- (A);

        \foreach \i in {0,...,7} {
            \foreach \j in {0,...,7} {
                \pgfmathsetmacro{\pxx}{\i*0.25};
                \pgfmathsetmacro{\pyy}{\j*0.25};
                \coordinate (MM\i) at ($(-0.625,-0.625) + (\pxx,\pyy)$);
                \draw (MM\i) circle (0.05);
            }
        }
    \end{scope}

\end{tikzpicture}
\caption{Partition of the boundary of~$T$ (bottom edge only, for clarity)}
\label{fig:Pict1}
\end{figure}
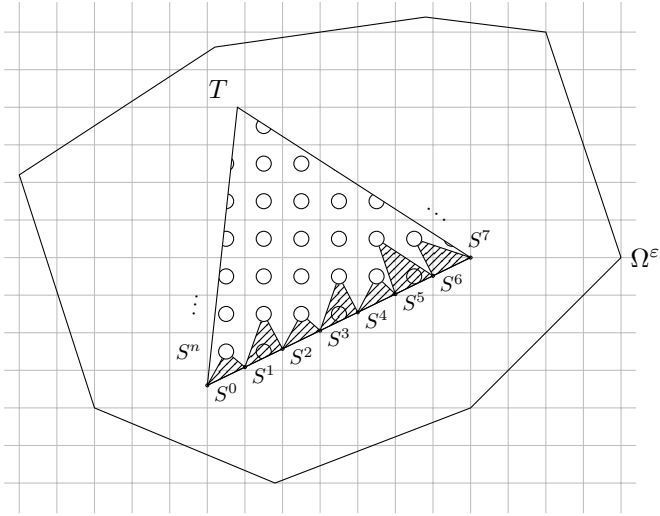
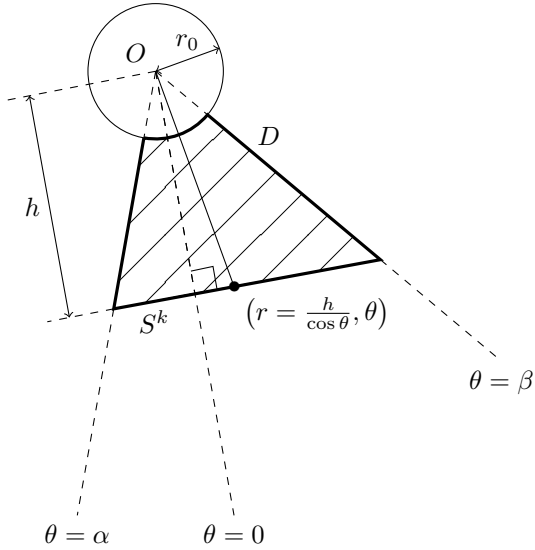
\begin{figure}
\centering
\begin{tikzpicture}[scale=3]
    \coordinate (A) at (-100:1.07);
    \coordinate (M) at (-80:1);
    \coordinate (B) at (-40:1.3);
    \coordinate (K) at (-70:1.02);

    \coordinate (Ap) at (-100:0.3);
    \coordinate (Bp) at (-40:0.3);

    \node[above left] (orig) at (0,0) {$O$};
    %\node (orig) at (0,0) {$\bullet$};

    \draw (0,0) circle (0.3);
    \draw[<->] (0,0) -- node[above] {$r_0$} ++ (20:0.3);

    \draw[dashed] (A) -- (-100:2) node[below] {$\theta=\alpha$};
    \draw[dashed] (0,0) -- (-80:2) node[below] {$\theta = 0$};
    \draw[dashed] (0,0) -- (-80:1);
    \draw[dashed] (B) -- (-40:2) node[below] {$\theta = \beta$};

    \draw[very thick, XHatchSize=18pt, pattern=XHatch] (Ap) -- (A) -- (B) -- (Bp) arc (-40:-100:0.3);

    \node (meuh) at (K) {$\bullet$};
    \draw[dashed] (0,0) -- (A);
    \draw[dashed] (0,0) -- (B);

    \draw (0,0) -- (K) node[below right]
    {$\left(r=\frac{h}{\cos\theta},\theta\right)$};

    \coordinate (Ah) at ($(A) + (-169:0.3)$);
    \coordinate (Oh) at ($(0,0) + (-169:0.67)$);

    \draw[thin] (M) ++ (12:0.1) -- ++ (100:0.1) -- ++ (192:0.1);
    \draw (A) -- node[below] {$S^k$} (M);

    \draw[dashed] (0,0) -- (Oh);
    \draw[dashed] (A) -- (Ah);

    \draw[<->] ($(Oh) + (11:0.1)$) -- node[left] {$h$} ($(Ah) + (11:0.1)$);

    \draw (A) -- (M) -- (B);

    \node[right] (midD) at (-35:0.5) {$D$};
\end{tikzpicture}
\caption{Local coordinates system associated to some~$S^{k}$}
\label{fig:Pict2}
\end{figure}
We can safely suppose that the perforation pattern $B$ contains a disc of
radius $c_1>0$. It means that each perforation $\varepsilon B_k$, $
k\in\mathbb{Z}^2$ contains a disc of radius $r_0=c_1\varepsilon$. As shown
in \figref{fig:Pict1}, the boundary $\partial T$ can be decomposed into non
overlapping segments $S^0, S^{1}, S^{2}\ldots$ such that
each segment $S$ lies at a distance no greater than $c_2\varepsilon$
from the center of a disc of radius $r_0$ which lies
completely inside $B^\varepsilon\cap T$. In order to do this, we should
suppose that the mesh cell is big enough, hence the restriction $
H\ge\gamma\varepsilon$. Thus, to each segment
$S$ we associate a disc of radius $r_0$ centered at a point $O$ and a
``sector'' $D$ (see \figref{fig:Pict2}) which is bounded by two lines
intersecting at $O$,
by $S$ itself and by a portion of the circle centered at $O$.

Let us fix a segment $S$ as above and introduce properly shifted and rotated
polar coordinates $(r,\theta)$ such that $r=0$ corresponds to the disc
center $O$ and $\theta=0$ corresponds to the direction normal to $S$, \emph{cf.}
\figref{fig:Pict2}. The segment $S$ is parameterized in these coordinates as
\begin{equation*}
\theta\in[\alpha,\beta]\mapsto X_{\theta}:=
\begin{pmatrix}
r_{\theta} \\
\theta
\end{pmatrix}
\mbox{ with }r_{\theta}=\frac{h}{\cos\theta}\,,
\end{equation*}
where $h$ is the minimal distance from point $O$ to the line containing $S$
and $\alpha<0<\beta$. A simple geometrical calculation yields
\begin{equation*}
|dX_{\theta}|=\frac{h}{\cos^{2}\theta}d\theta\,,
\end{equation*}
so that
\begin{equation*}
\int_{S}v^{2}=\int_{\alpha}^{\beta}v^{2}(r{}_{\theta},\theta)\frac{h}{
\cos^{2}\theta}d\theta\,,
\end{equation*}
where we write $v$ as a function of polar coordinates $(r,\theta)$. Since $v$
vanishes in the holes, we have $v(r{}_{0},\theta)=0$ and
\begin{align*}
\int_{S}v^{2} & =\int_{\alpha}^{\beta}\left(\int_{r_{0}}^{r_{\theta}}\frac{
\partial v}{\partial r}(r,\theta)dr\right)^{2}\frac{h}{\cos^{2}\theta}d\theta
\\
& \le\int_{\alpha}^{\beta}\left(\int_{r_{0}}^{r_{\theta}}|\nabla
v|^{2}(r,\theta)dr\right)(r_{\theta}-r_{0})\frac{r_{\theta}^{2}}{h}d\theta \\
& \le\left(\max_{\theta\in[\alpha,\beta]}\frac{(r_{\theta}-r_{0})r_{
\theta}^{2}}{hr_{0}}\right)\int_{\alpha}^{\beta}\int_{r_{0}}^{r_{\theta}}|
\nabla v|^{2}(r,\theta)r\, dr\, d\theta\le C\varepsilon\int_{D}|\nabla v|^{2}\,,
\end{align*}
with some constant $C>0$. Indeed, under our geometrical assumptions we have $
h\ge r_{0}\ge c_{1}\varepsilon$, $r_{\theta}\le c_{2}\varepsilon$ so that $
\frac{(r_{\theta}-r_{0})r_{\theta}^{2}}{hr_{0}}\le\frac{c_{2}^{3}}{c_{1}^{2}}
\varepsilon$. Now, summing up over all the segments composing $\partial T$
and noting that the sector $D$ corresponding to such a segment $S$ is inside
the cell $T$ and, moreover, for any two segments $S,S^{\prime }$ the
corresponding sectors $D,D^{\prime }$ do not intersect, yields (\ref
{eq:tracp}).
\end{proof}

\begin{lemma}[Poincar\'e inequality on a perforated mesh cell]
    \label{lem:poincare perforated T}
    Suppose $H\ge\gamma\eps$ with $\gamma$ from Lemma \ref{lem:tracp}. Then, for any $T\in\TH$ and any $v\in H^1(T)$ vanishing on $B^\eps\cap T$
    \begin{equation}
            \Vert v \Vert_{L^2(T)} \leq \varepsilon\,C \left|v\right|_{H^1(T)}
        \label{eq:poincare perforated T}
    \end{equation}
    with some positive $\eps$- and $h$-independent constant~$C$.
\end{lemma}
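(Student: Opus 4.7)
The plan is to split $T$ into an interior part made of full periodic cells and a thin boundary strip of width $\sim\eps$, handling each with a different tool: a cell-by-cell scaled Poincaré inequality on the interior, and Lemma~\ref{lem:tracp} combined with a one-dimensional inequality on the strip.

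\medskip
\noindent\textbf{Step 1: decomposition.} Let $\mathcal{J}(T) = \{i\in\Z^2 : Y_i\subset T\}$ and set $T^{int}=\bigcup_{i\in\mathcal{J}(T)} Y_i$ and $T^b = T\setminus T^{int}$. Because $T\subset \Omega$, any cell $Y_i$ with $Y_i\subset T$ satisfies $Y_i\subset\Omega$, so $i\in\mathcal{I}$ and $B_i\subset B^\eps\cap T$. Hence $v$ vanishes on each $B_i$ for $i\in\mathcal{J}(T)$. The strip $T^b$ is contained in the set of points of $T$ at distance $\leq \sqrt{2}\,\eps$ from $\partial T$; this is where the hypothesis $H\ge\gamma\eps$ enters, as it guarantees that $\mathcal{J}(T)$ is nonempty and that $T^b$ is genuinely thin compared to $T$.

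\medskip
\noindent\textbf{Step 2: interior.} On the reference cell $Y$, the Poincaré inequality $\|\phi\|_{L^2(Y)}\le C|\phi|_{H^1(Y)}$ holds for all $\phi\in H^1(Y)$ vanishing on $B$, since $|B|>0$. Scaling by $\eps$ and transporting to each $Y_i$ with $i\in\mathcal{J}(T)$ yields $\|v\|_{L^2(Y_i)}\le C\eps|v|_{H^1(Y_i)}$, and summing over $i\in\mathcal{J}(T)$ gives
\begin{equation*}
\|v\|_{L^2(T^{int})}^2 \le C\eps^2\,|v|_{H^1(T^{int})}^2 \le C\eps^2\,|v|_{H^1(T)}^2.
\end{equation*}

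\medskip
\noindent\textbf{Step 3: boundary strip.} I would cover $T^b$ by a finite collection (bounded in terms of the mesh regularity) of curved rectangles $R_E$, one along each edge $E\subset\partial T$, each of the form $\{x + t\,n_E : x\in E,\ 0<t<c\eps\}$ for a suitable $c$, together with $O(\eps^2)$-sized corner regions that can be absorbed into neighbouring $R_E$ or into $T^{int}$. For each such strip, Fubini and the 1D identity $v(x+tn_E)=v(x)+\int_0^t \partial_n v\,(x+sn_E)\,ds$ give
\begin{equation*}
\|v\|_{L^2(R_E)}^2 \le 2c\eps\,\|v\|_{L^2(E)}^2 + C\eps^2\,|v|_{H^1(R_E)}^2 .
\end{equation*}
Summing over edges and applying Lemma~\ref{lem:tracp} (which is precisely where the vanishing of $v$ on the holes is used again, through $\|v\|_{L^2(\partial T)}^2\le C\eps|v|_{H^1(T)}^2$) produces $\|v\|_{L^2(T^b)}^2 \le C\eps^2\,|v|_{H^1(T)}^2$. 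Adding Steps~2 and~3 yields \eqref{eq:poincare perforated T}.

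\medskip
\noindent\textbf{Expected obstacle.} The only genuinely delicate point is the geometry of $T^b$ near the vertices of $T$: the normal parameterization $x+tn_E$ is not globally well defined on a polygonal boundary, so corner neighbourhoods must be treated separately. Since the number of corners is controlled by the regularity of $\TH$ and each corner contributes a region of measure $O(\eps^2)$ on which a crude $L^\infty$-type bound (or a direct Poincaré against a neighbouring $Y_i$) suffices, this is a technical nuisance rather than a conceptual difficulty.
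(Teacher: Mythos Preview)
Your proposal is correct and follows essentially the same route as the paper: decompose $T$ into full periodic cells (handled by the scaled reference-cell Poincar\'e inequality) plus thin rectangles along each edge (handled by a 1D integration from the edge combined with Lemma~\ref{lem:tracp}). The paper treats the corner issue by allowing the edge rectangles to overlap and extending $v$ to the full rectangle while controlling the $H^1$ norm; note that your aside about an ``$L^\infty$-type bound'' near corners is not quite right in $H^1(\R^2)$, but your alternative (Poincar\'e against a neighbouring $Y_i$) or the paper's overlap-and-extend device both work.
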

\begin{proof}
Applying a Poincar\'e inequality on the reference cell $Y$ with the hole $B$ and then rescaling to the cells of size $\eps$ gives 
$$
            \Vert v \Vert_{L^2(Y_k)} \leq \varepsilon\,C \left|v\right|_{H^1(Y_k)}
$$
for any perforated cell $Y_k$, $k\in\Z^2$ and any $v\in H^1(Y_k)$ vanishing on $B_k$. 
Let $\mathcal{I}(T)\subset\Z^2$ be the set of indexes corresponding to the cells inside $T$ and assume that the boundary of $T$ is composed of $m$ edges $E_1,\ldots,E_m$. One can then introduce $m$ rectangles $\Pi_1,\ldots,\Pi_m$, each $\Pi_i$ with base  $E_i$ and of width (in the direction perpendicular to $E_i$) $\le c\eps$ with some $H$-independent constant $c$, so that
$$
T\subset \cup_{k\in\mathcal{I}(T)}Y_k\cup\Pi_1\cup\cdots\cup\Pi_m.
$$
We can also safely assume that every point in $T$ is covered by at most 3 subsets on the right-hand send of the inclusion above, i.e. at most by a cell $Y_k$ and by two rectangles $\Pi_i$.

Let us introduce the Cartesian coordinates $( \xi, \eta)$ on rectangle $\Pi_i$
so that $\eta = 0$, $\xi \in [0, |E_i|]$ corresponds to $E_i$ and the
coordinate $\eta$ varies from 0 to some $h_i \leq c \varepsilon$ on $\Pi_i$.
Assuming that $v$ is extended from $\Pi_i \cap T$ to the whole $\Pi_i$ so that
the $H_1$ norm of $v$ over $\Pi_i$ remains bounded via that over $\Pi_i \cap
T$, we calculate
\begin{multline*}
 \| v \|_{L^2 ( \Pi_i)}^2 = \int_0^{|E_i|} \int_0^{h_i} v^2 ( \xi, \eta) d
   \eta d \xi
\\   
   = h_i \int_0^{|E_i|} v^2 ( \xi, 0) d \xi + \int_0^{|E_i|}
   \int_0^{h_i} \int_0^{\eta} 2 v ( \xi, s) \partial_{\eta} v ( \xi, s)
   ds d\eta d\xi 
\\ 
 \leq c \varepsilon \| v \|^2_{L^2 ( E_i)} + \frac{1}{2} \| v \|_{L^2 (
   \Pi_i)}^2 + C \varepsilon^2 | v |_{H^1 ( \Pi_i)}^2 . 
\end{multline*}
Thus, 
\[ \| v \|_{L^2 ( \Pi_i \cap T)}^2 \leq C ( \varepsilon \| v \|^2_{L^2 ( E_i)}
   + \varepsilon^2 | v |_{H^1 ( \Pi_i \cap T)}) . \]
Summing over all the cells $Y_k$, $k \in \mathcal{I} ( T)$ and all the
rectangles $\Pi_i$ and reminding that each point of $T$ is covered by at most 3 such sets, gives
\[ \| v \|_{L^2 ( T)}^2 \leq C ( \varepsilon \| v \|^2_{L^2 ( \partial T)} +
   \varepsilon^2 | v |_{H^1 ( T)}) \]
which entails (\ref{eq:poincare perforated T}) thanks to Lemma \ref{lem:tracp}.
\end{proof}

\begin{lemma}[Poincar\'e inequality in $H^1$ - broken spaces]
    \label{lem:poincare perforated broken}
  For any~$v \in V_H^{{ext}}$
  \begin{equation}
    \|v\|_{L^2 (\Omega^{\varepsilon})} \leq \varepsilon \hspace{0.17em} C | v
    |_{H^1 (\Omega^{\varepsilon})} . \label{eq:poincare perforated broken}
  \end{equation}
  wth some positive $\varepsilon$-independent constant~$C$.
\end{lemma}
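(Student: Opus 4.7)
The plan is to reduce this broken Poincar\'e inequality to the cell-wise version provided by Lemma \ref{lem:poincare perforated T} and then sum the squares over mesh cells. The crucial observation is that, for this $L^2$-vs-$H^1$ estimate, the only feature of $V_H^{ext}$ that matters is the vanishing of $v$ on $B^\eps$; the weighted edge-average constraints defining $V_H^{ext}$ play no role here.

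First, I would fix $v\in V_H^{ext}$ and apply Lemma \ref{lem:poincare perforated T} cell by cell. Each restriction $v|_T$ lies in $H^1(T)^2$ and vanishes on $B^\eps\cap T$, so, in the implicit regime $H\ge\gamma\eps$ under which the lemma is stated, I obtain
$$\|v\|_{L^2(T)}\le C\eps\,|v|_{H^1(T)}$$
with a constant $C$ depending only on the mesh regularity and the perforation pattern $B$.

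Next, I would square this bound and sum over $T\in\TH$. Using the broken-integral convention introduced in Section \ref{sec:crmsfem} together with $v=0$ on $B^\eps$ (so that $\|v\|_{L^2(T\cap\Omega^\eps)}=\|v\|_{L^2(T)}$) and the definition $|v|^2_{H^1(\Omega^\eps)}=\sum_{T\in\TH}|v|^2_{H^1(T)}$ of the broken seminorm, I would conclude
$$\|v\|^2_{L^2(\Omega^\eps)}=\sum_{T\in\TH}\|v\|^2_{L^2(T)}\le C^2\eps^2\sum_{T\in\TH}|v|^2_{H^1(T)}=C^2\eps^2\,|v|^2_{H^1(\Omega^\eps)}\,,$$
and taking square roots yields the desired inequality.

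The only subtle point — more an implicit hypothesis than a real obstacle — is the requirement $H\ge\gamma\eps$ inherited from Lemma \ref{lem:poincare perforated T}. It guarantees that each coarse cell $T$ contains enough full periodic cells $Y_i\subset\Omega$ for the local Poincar\'e constant to be uniform in $T$, even when $T$ is adjacent to $\partial\Omega$ and some overlapping $Y_i$ lie outside $\Omega$ and hence carry no perforation; this is exactly what makes the cell-wise application of Lemma \ref{lem:poincare perforated T} legitimate throughout the summation.
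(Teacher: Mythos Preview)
Your argument is exactly what the paper does in the regime $H\ge\gamma\varepsilon$, and for that regime it is correct: sum the cell-wise Poincar\'e inequality of Lemma~\ref{lem:poincare perforated T} over $T\in\mathcal T_H$ and you are done. The problem is that you treat the restriction $H\ge\gamma\varepsilon$ as ``an implicit hypothesis'' of the present lemma. It is not. Lemma~\ref{lem:poincare perforated broken} is stated with no relation between $H$ and $\varepsilon$, and it is invoked precisely in the regime $H<\gamma\varepsilon$ at the beginning of Section~\ref{sec:proof} (to derive the trivial bound~\eqref{MainErrEstTriv} on $|u_H|_{H^1}$ via the broken Poincar\'e inequality). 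So you cannot simply inherit the hypothesis of Lemma~\ref{lem:poincare perforated T}; the case $H<\gamma\varepsilon$ must be proved separately, and your argument does not cover it.

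The difficulty in that regime is real: when $H<\gamma\varepsilon$, a coarse cell $T$ may contain no perforation at all, so $v|_T$ need not vanish anywhere on $T$ and the local estimate~\eqref{eq:poincare perforated T} is simply false. The paper handles this by working instead on each periodic cell $Y_k$ (of size~$\varepsilon$, where $v$ does vanish on $B_k$) and applying a BV embedding $\|v\|_{L^2(Y_k)}\le C\operatorname{TV}_{Y_k}(v)$. Integration by parts against a smooth test function picks up both the piecewise gradient and the jumps of $v$ across the edges of $\mathcal E_H$ intersecting $Y_k$; the jump contribution is then controlled via a counting argument (there are $O(\varepsilon^2/H^2)$ edges in $Y_k$, each of length $\le H$) and a trace inequality, and summing over $k$ yields~\eqref{eq:poincare perforated broken}. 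Note that here the edge-average constraints in $V_H^{ext}$ are still not used, but the full broken structure of $v$ (jumps included) enters the estimate.
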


\begin{proof}
  We distinguish two cases: $H \ge \gamma \varepsilon$ with $\gamma$ from
  Lemma \ref{lem:tracp} and $H < \gamma \varepsilon$. In the first case, the
  current lemma is a simple corollary of the previous one obtained by
  summing (\ref{eq:poincare perforated T}) over all the mesh cells. We thus
  assume from now on $H < \gamma \varepsilon$. Borrowing from \cite{Chainais13} the idea of
  using an embedding theorem for BV spaces (the functions of bounded
  variation), we can write on each cell $Y_k$, $k \in \mathbb{Z}^2$ (of size
  $\varepsilon$, with the perforation $B_k$ inside) and any $v \in V_H^{ext}$
  extended by 0 outside $\Omega$
  \begin{equation}\label{BVimb}
 \| v \|_{L^2 (Y_k)} \le C\operatorname{TV}_{Y_k} (v) :=
     C\sup_{\varphi \in C^1_C (Y_k), | \varphi | \le 1
     \text{ on }Y_k} \int_{Y_k} v\Div \varphi 
  \end{equation}
  We have applied here Theorem 2 from \cite{Chainais13} the proof of which can be found in \cite[Chapter 3]{BV00}.\footnote{
  We recall that the ambient dimension is assumed equal to 2 in this paper. Were we interested in the case of a perforated domain in $\R^d$ with $d>2$, we would have the norm of $L^{d/(d-1)}$ rather than $L^2$ in the left-hand side of (\ref{BVimb}). A proof of (\ref{eq:poincare perforated broken}) could be then performed by first applying (\ref{BVimb}) to $|v|^\alpha$ with $\alpha=2\frac{d-1}{d}$ rather than to $v$.}
  Note that we can use the semi-norm $\operatorname{TV}_{Y_k}$ of the BV space
  since $v$ vanishes on the perforation $B_k$. The constant
  $C$ is in principle domain dependent but it can be considered
  $\varepsilon$-independent in our case. Indeed, the inequality above is
  invariant under scaling $x \mapsto (x - x_k) / \varepsilon$ so that the
  value of $C$ can be taken as that on the reference cell $Y$ with its
  reference perforation $B$.
  
  Integration by parts and Cauchy-Schwarz inequality gives for any $\varphi
  \in C^1_C (Y_k)$ such that $| \varphi | \le 1$ on $Y_k$
  \begin{align*}
   \left| \int_{Y_k} v\Div \varphi \right| &= \left| - \int_{Y_k
     \backslash\mathcal{E}_H} \nabla v \cdot \varphi + \sum_{E \in
     \mathcal{E}_H} \int_{Y_k \cap E} [[v]] n \cdot \varphi \right| \\ 
     &\le \varepsilon | v |_{H^1 (Y_k)} + \left( \sum_{E \in \mathcal{E}_H}
     \| [[v]] \|^2_{L^2 (Y_k \cap E)} \right)^{\frac{1}{2}} \left( \sum_{E \in
     \mathcal{E}_H} | Y_k \cap E | \right)^{\frac{1}{2}} \\
    &\le \varepsilon | v |_{H^1 (Y_k)} + C
     \frac{\varepsilon}{\sqrt{H}} \left( \sum_{E \in \mathcal{E}_H} \| [[v]]
     \|^2_{L^2 (Y_k \cap E)} \right)^{\frac{1}{2}} 
  \end{align*}
  Indeed, the number of mesh edges intersecting $Y_k$ is of the order of
  $\frac{\varepsilon^2}{H^2}$ and the length of each edge is smaller than $H$
  so that $\sum_{E \in \mathcal{E}_H} | Y_k \cap E | \le C
  \frac{\varepsilon^2}{H}$. Taking the supremum over $\varphi$ gives
  \[ \| v \|_{L^2 (Y_k)}^2 \le C \left( \varepsilon^2 | v |^2_{H^1
     (Y_k)} + \frac{\varepsilon^2}{H} \sum_{E \in \mathcal{E}_H} \| [[v]]
     \|^2_{L^2 (Y_k \cap E)} \right) \]
  Summing this over all the cells $Y_k$ gives
  \[ \| v \|^2_{L^2 (\Omega)} \le C \left( \varepsilon^2 | v |^2_{H^1
     (\Omega)} + \frac{\varepsilon^2}{H} \sum_{E \in \mathcal{E}_H} \| [[v]]
     \|^2_{L^2 (Y_k)} \right) \]
  By the trace inequality $\| [[v]] \|^2_{L^2 (Y_k)} \le {CH} | v
  |^2_{H^1 (\omega_E)}$, this entails the desired result
  \[ \| v \|^2_{L^2 (\Omega)} \le C \varepsilon^2 \left( | v |^2_{H^1
     (\Omega)} + \sum_{E \in \mathcal{E}_H} | v |^2_{H^1 (\omega_E)} \right)
     \le C \varepsilon^2 | v |^2_{H^1 (\Omega)} \]  
\end{proof}

The proof of the following lemma uses extensively the results and notations on homogenization from Section \ref{sec:homog}. 
It will be the principal ingredient of the proof of Theorem \ref{theo:main}.

\begin{lemma}
\label{lem:brique}Let $u,p$ be the solution to the Stokes system
\eqref{genP}--\eqref{genB} and set
$p=p^{\ast }+p^{\prime }$ where $p^{\ast }$ is the solution to the Darcy
problem \eqref{darcy}--\eqref{DarcyB}.
Under the same assumptions as those of Theorem~\ref{theo:main}
with $\gamma $ from Lemma~\ref{lem:tracp}, we have, for any $v\in \ZHext:=
\{v\in\VHext:\Div v|_T=0\ \forall T\in\TH\}$
\begin{equation}
\left\vert \sum_{T\in \mathcal{T}_{H}}\int_{\partial T\cap \Omega^{\varepsilon }}(\left(\nabla u\right)n-p^{\prime }n)\cdot v\right\vert \leq
C\varepsilon \left( \sqrt{\varepsilon }+\sqrt{\frac{\varepsilon }{H}}\right)
\left| v\right|_{H^{1}(\Omega^{\varepsilon })}\,\Vert f-\nabla p^{\ast
}\Vert_{H^{2}(\Omega) \cap C^{1}(\bar\Omega)},  \label{eq:brique}
\end{equation}
where the constant $C$ is independent of $H$, $\varepsilon $, $f$ and $v$.
\end{lemma}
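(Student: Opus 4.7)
My strategy is to integrate by parts edge-wise, substitute the two-scale homogenization expansion of Section~\ref{sec:homog}, and exploit the CR$_3$ continuity of $[[v]]$.

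I begin by rewriting the left-hand side as $\sum_{E\in\mathcal{E}_H}\int_{E\cap\Omega^\eps}\bigl((\nabla u)n_E - p'n_E\bigr)\cdot[[v]]\,ds$, where each internal edge contributes once (the two adjacent cells giving opposite outward normals) and $[[v]]=v$ on boundary edges. I then substitute the approximations $u\approx\tilde u^\ast = \eps^2(\tilde w_i)_\eps(f_i-\partial_i p^\ast)$ and $p\approx\tilde p^{\eps,1}=p^\ast+\eps(\tilde\pi_i)_\eps(f_i-\partial_i p^\ast)$ introduced in the proof of Theorem~\ref{HomMain}. The contribution of the homogenization residual $(u-\tilde u^\ast,\,p-\tilde p^{\eps,1})$ to the edge sum is controlled via the system~\eqref{StErr} it satisfies, a global integration by parts over $\Omega^\eps$, the estimates of Theorem~\ref{HomMain}, and the broken Poincar\'e inequality of Lemma~\ref{lem:poincare perforated broken} applied to $v$; this yields a bound $\le C\eps^{3/2}|v|_{H^1(\Omega^\eps)}\Vert f-\nabla p^\ast\Vert_{\espHC}$, producing the $\eps\sqrt\eps$ term of~\eqref{eq:brique}. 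The contributions coming from the boundary layers (where $\eta^\eps$ varies), from $\nabla\eta^\eps$ terms, and from the smooth extensions $\tilde w_i - w_i$ and $\tilde\pi_i - \pi_i$ inside virtual holes fit within the same $\eps^{3/2}$ budget thanks to the explicit bounds~\eqref{etabnd}.

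What remains is the leading edge integrand
\begin{equation*}
R_E(x) = \eps\bigl[(\nabla w_i)_\eps(x)\,n_E - (\pi_i)_\eps(x)\,n_E\bigr]\bigl(f_i-\partial_i p^\ast\bigr)(x),
\end{equation*}
and I must bound $\sum_E\int_E R_E\cdot[[v]]\,ds$. Since $v\in \ZHext$, one has $\int_E[[v]]\cdot\omega_{E,j}=0$ for $\omega_{E,j}\in\{e_1,e_2,n_E\psi_E\}$, so I may subtract from $R_E$ any function on $E$ in $\vspan\{e_1,e_2,n_E\psi_E\}$ without altering the integral; call $\tilde R_E$ the resulting projected residual. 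Because $R_E$ has amplitude $O(\eps)$ (the explicit $\eps$ prefactor absorbs the scaling) with bounded periodic oscillating factor and smooth slow factor, the crude bound $\Vert\tilde R_E\Vert_{L^2(E)}\le C\eps\sqrt H\,\Vert f-\nabla p^\ast\Vert_{L^\infty}$ holds. Summing over the $\sim|\Omega|/H^2$ edges gives $\sum_E\Vert\tilde R_E\Vert^2_{L^2(E)}\le C(\eps^2/H)\,\Vert f-\nabla p^\ast\Vert^2_{L^\infty}$. Coupling this with the trace-type estimate $\sum_E\Vert[[v]]\Vert^2_{L^2(E)}\le C\eps|v|^2_{H^1(\Omega^\eps)}$ — a consequence of Lemma~\ref{lem:tracp} since $v$ vanishes on $B^\eps$ — and Cauchy-Schwarz yields the remaining $\eps\sqrt{\eps/H}$ contribution of~\eqref{eq:brique}.

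The main obstacle is the bookkeeping in the second step: the boundary-layer corrections, the $\nabla\eta^\eps$ terms, and the virtual-hole corrector extensions must all be shown to fit inside the $\eps^{3/2}$ budget, which requires care because several of these live in a band of width $\sim\eps$ near $\partial\Omega$ and carry explicit factors $1/\sqrt\eps$, $1/\eps^{3/2}$ coming from \eqref{etabnd}. A secondary but crucial point is the role of the CR$_3$ weight $n_E\psi_E$: it enables the normal-linear projection of $R_E$ that makes $\tilde R_E$ satisfy the edgewise $O(\eps\sqrt H)$ bound above; with only CR$_2$ one cannot subtract this linear mode and the analogous argument would degrade, consistent with the numerical observation in \figref{fig:base} that CR$_2$ is insufficient.
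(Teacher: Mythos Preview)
Your overall strategy coincides with the paper's: split the boundary sum into the contribution of the homogenization residual $(u-\tilde u^\ast,\,p-\tilde p^{\eps,1})$ and the leading oscillating flux, control the former via the estimates of Theorem~\ref{HomMain} together with the broken Poincar\'e inequality of Lemma~\ref{lem:poincare perforated broken}, and control the latter with the perforated trace inequality of Lemma~\ref{lem:tracp}. The paper organizes the leading term cell-by-cell (bounding $\|v\|_{L^2(\partial T)}$ and $\|f-\nabla p^\ast\|_{L^2(\partial T)}$ via Lemmas~\ref{lem:tracp} and~\ref{lem:traceH}) rather than edge-by-edge with jumps, but the two versions are equivalent.

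Where you go astray is the claim that the CR$_3$ weight $n_E\psi_E$ is what makes the leading-term bound work. The pointwise estimate $|R_E(x)|\le C\eps\,\|f-\nabla p^\ast\|_{L^\infty}$ already gives $\|R_E\|_{L^2(E)}\le C\eps\sqrt H$ \emph{without} any projection; your $\tilde R_E$ is no smaller than $R_E$ in this norm, so the subtraction is doing nothing in your own argument. Accordingly, the paper's proof of this lemma makes no use of the weights $\omega_{E,j}$ whatsoever---only $v=0$ on $B^\eps$ (for Lemma~\ref{lem:tracp}) and $\Div v|_T=0$. The CR$_3$ hypothesis enters the paper at a different place, namely in the proof of Theorem~\ref{theo:main} to handle the separate term $\sum_T\int_T\nabla p^\ast\cdot v$ via \eqref{CR3trick1}--\eqref{CR3trick2}; that term is not part of the present lemma because $p'=p-p^\ast$, not $p$, appears here.

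A minor point: your reference to ``$\nabla\eta^\eps$ terms'' and boundary layers is spurious. The cut-off $\eta^\eps$ appears only in $u^{\eps,3}$, not in $\tilde u^\ast$ or $\tilde p^{\eps,1}$; the residual you need is governed by~\eqref{StErr} and~\eqref{eq:homogenization partial residual}, neither of which involves $\eta^\eps$.
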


\begin{proof}
Using the divergence theorem on any $T\in \mathcal{T}_{H}$ and reminding
\eqref{eq:definition p epsilon one} and
$\Div v=0$ on $T$, we observe that
\begin{align}
&\sum_{T\in \mathcal{T}_{H}}\int_{\partial T\cap \Omega^{\varepsilon
}}(\left(\nabla u\right)n-p^{\prime }n)\cdot v
\notag\\
&=\sum_{T\in \mathcal{T}
_{H}}\int_{\Omega^{\varepsilon }\cap T}\nabla u:\nabla v-\int_{\Omega
_{\varepsilon }\cap T}(f-\nabla p^{\ast })\cdot v
\notag\\
&=\sum_{T\in \mathcal{T}_{H}}\left[ \int_{\Omega^{\varepsilon }\cap
T}(\nabla u-\nabla \tilde u^{\ast }):\nabla v+\int_{\Omega^{\varepsilon }\cap
T}(\nabla \tilde u^{\ast }-\left(\tilde p^{\varepsilon ,1}-p^{\ast }\right)I):\nabla v\right.
\notag\\
&\left.\quad\quad\quad-\int_{\Omega^{\varepsilon }}(f-\nabla p^{\ast })\cdot v\right]
\notag\\
&=\sum_{T\in \mathcal{T}_{H}}\int_{\Omega^{\varepsilon }\cap T}(\nabla
u-\nabla \tilde u^{\ast }):\nabla v+\sum_{T\in \mathcal{T}_{H}}\int_{\Omega
_{\varepsilon }\cap \partial T}\left(\left(\nabla \tilde u^{\ast }\right)n-\left(\tilde p^{\varepsilon
,1}-p^{\ast }\right)n\right)\cdot v
\notag\\
&\quad\quad\quad-\sum_{T\in \mathcal{T}_{H}}\int_{\Omega^{\varepsilon }\cap T}(f+\Delta
\tilde u^{\ast }-\nabla \tilde p^{\varepsilon ,1})\cdot v \,. \label{brique1}
\end{align}
The first term in the sum above can be bounded by
$C\varepsilon \sqrt{\varepsilon }\Vert
f - \nabla p^\ast\Vert_{\espHC}|v|_{H^{1}(\Omega^{\varepsilon })}$,
using the homogenization estimate \eqref{errL2}. We turn
now to the second term in \eqref{brique1}.

Using Lemmas~\ref{lem:traceH} and~\ref{lem:tracp}
and the fact that $\tilde w_{i}$, $\tilde \pi_{i}$ and $\nabla
\tilde w_{i}$ are uniformly bounded, we have for any $T\in \mathcal{T}_{H}$
\begin{multline*}
\quad\left\vert \int_{\Omega^{\varepsilon }\cap \partial T}((\nabla \tilde u^{\ast
})n-(\tilde p^{\varepsilon ,1}-p^{\ast })n)\cdot v\right\vert
\\
 =\left\vert
\int_{\partial T\cap \Omega^{\varepsilon }}\left[ \varepsilon (\nabla
\tilde w_{i})_{\varepsilon }n(f_{i}-\partial_{i}p^{\ast })+\varepsilon
^{2}(\tilde w_{i})_{\varepsilon }\cdot \nabla (f_{i}-\partial_{i}p^{\ast
})n-\varepsilon (\tilde \pi_{i})_{\varepsilon }(f_{i}-\partial_{i}p^{\ast })
\right] \cdot v\right\vert
\\
 \leq C \Vert v\Vert_{L^2(\partial T)}
 [\varepsilon \Vert f-\nabla p^{\ast }\Vert_{L^{2}(\partial T)} + \varepsilon^2 \Vert \nabla(f-\nabla p^{\ast })\Vert_{L^{2}(\partial T)} ]
\\
 \leq C\varepsilon \sqrt{\frac{\varepsilon }{H}}|v|_{H^{1}(T)}\,\Vert
f-\nabla p^{\ast }\Vert_{H^{2}(T)}\,.
\end{multline*}
Now, summing up over all the cells and using the discrete Cauchy-Schwarz
inequality yields
\begin{equation*}
\left\vert \sum_{T\in\TH} \int_{\Omega^{\varepsilon }\cap \partial T}((\nabla u^{\ast
})n-(\tilde p^{\varepsilon ,1}-p^{\ast })n)\cdot v\right\vert \leq C\varepsilon
\sqrt{\frac{\varepsilon }{H}}\left| v\right|_{H^{1}(\Omega^{\varepsilon
})}\,\Vert f-\nabla p^{\ast }\Vert_{H^{2}(\Omega)}.
\end{equation*}
To bound the third term in \eqref{brique1}, we
recall the definition of~$F_\varepsilon$~\eqref{eq:stokes expansion residual}
and observe that
\begin{equation*}
f+\Delta \tilde u^{\ast }-\nabla \tilde p^{\varepsilon ,1} =  F_\varepsilon
\end{equation*}
Thus, using the estimate of $F_\eps$ and Poincar\'e inequality from Lemma \ref{lem:poincare perforated broken},
\begin{align*}
\left\vert \sum_{T\in \mathcal{T}_{H}}\int_{\Omega_{\varepsilon }\cap
T}(f+\Delta \tilde u^{\ast }-\nabla \tilde p^{\varepsilon ,1}):v\right\vert
&\leq C\varepsilon \sqrt{\eps}\Vert f-\nabla p^{\ast
}\Vert_{\espHC}\left| v\right|_{H^{1}(\Omega^{\varepsilon })}\,.
\end{align*}
Summing up the bounds for all the three terms in \eqref{brique1} yields (\ref
{eq:brique}).
\end{proof}

%%%%%%%%%%%%%%%%%%%%%%%%%%%%%%%%%%%%%%%%%%%%%%%%%%%%%%%%%%%%%%
\section{Proof of Theorem \protect\ref{theo:main}}

\label{sec:proof}

We note first of all that error estimate (\ref{MainErrEst}) is trivial if $H$ is of order $\eps$ or smaller. Indeed, if $H\leq \gamma \varepsilon $, then (\ref{MainErrEst}) is reduced to 
\begin{multline}\label{MainErrEstTriv}
|{u}-{u}_{H}|_{H^{1}(\Omega)}+\varepsilon \Vert p-p_{H}\Vert_{L^2(\Omega)}
\leq C\varepsilon 
(\Vert f\Vert_{\espHC}+\Vert p^{\ast }\Vert_{H^2(\Omega)})\,
\end{multline}
with a constant $C$ depending on $\gamma$. But we have, in fact for any $\eps$ and $H$,
\begin{align*}
|{u}|_{H^{1}(\Omega)}+\varepsilon \Vert p\Vert_{L^2(\Omega)}
&\leq C\varepsilon \Vert f\Vert_{L^{2}(\Omega)} \,,
\\
|{u}_{H}|_{H^{1}(\Omega)}+\varepsilon \Vert p_{H}\Vert_{L^2(\Omega)}
&\leq C\varepsilon \Vert f\Vert_{L^{2}(\Omega)} \,.
\end{align*}
These estimates for the velocity are easily obtained from the Poincar\'e inequality on the perforated domain $\Omega^\eps$ which is valid even for the broken $H^1$ Sobolev space, as proven in Lemma \ref{lem:poincare perforated broken}. As for the pressure,  
these are the standard bounds for the solutions of saddle-point problems since the inf-sup property holds with a constant of order $\eps$ both on continuous and discrete levels, cf. Lemmas \ref{infsupq} and \ref{infsupD}. This clearly entails (\ref{MainErrEstTriv}) and consequently (\ref{MainErrEst}) if $H<\gamma\eps$.

We thus assume from now on $H\geq \gamma \varepsilon $ with $\gamma $ from
Lemma~\ref{lem:tracp} and use without further notice Lemmas \ref{lem:tracp}, \ref{lem:poincare perforated T}, \ref{lem:brique} from the previous section. Our error
estimate is essentially based on a Strang lemma for nonconforming finite element methods. It can be 
stated in our notations, recalling equation (\ref{MsFEMz}) for $u_H$, as
\begin{lemma}
\textbf{(\emph{e.g.}~\cite[Lemma~10.1.7]{brenner})} \label{lem:strang} Let $u$ be
the solution to~\eqref{genP}--\eqref{genB} and $u_{H}$ be the solution to~\eqref{MsFEMz}.
Then
\begin{equation}
\left| u-u_{H}\right|_{H^{1}}\leq \inf_{v\in \ZH} |u-v|_{H^1}+\sup_{v\in \ZH\setminus \{0\}}\frac{|a (u-u_{H},v)|}{|v|_{H^1}}.  \label{eq:strang}
\end{equation}
\end{lemma}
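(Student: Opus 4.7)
The plan is to invoke the standard proof of Strang's second lemma adapted to the broken $H^1$ setting. The key observation is that the bilinear form $a(u,v) := \int_{\Omega^\varepsilon}\nabla u:\nabla v$ is an inner product on $\ZH$ whose induced norm coincides with $|\cdot|_{H^1}$. Positive definiteness is ensured by the broken Poincar\'e inequality of Lemma~\ref{lem:poincare perforated broken}, valid on $V_H^{ext}\supset\ZH$ since functions there vanish on $B^\varepsilon$. Hence $\ZH$ is a Hilbert space under $a(\cdot,\cdot)$ and orthogonal projections make sense.

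First I would let $\Pi u$ denote the $a$-orthogonal projection of $u$ onto $\ZH$. By construction,
\[
a(u-\Pi u,w)=0\quad\forall w\in\ZH,\qquad |u-\Pi u|_{H^1}=\inf_{v\in\ZH}|u-v|_{H^1}.
\]
Since $u_H\in\ZH$, the Pythagorean identity yields
\[
|u-u_H|_{H^1}^2=|u-\Pi u|_{H^1}^2+|\Pi u-u_H|_{H^1}^2.
\]

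Next, I would estimate the second term by the consistency error. Setting $w=\Pi u-u_H\in\ZH$, I decompose
\[
|w|_{H^1}^2=a(w,w)=a(\Pi u-u,w)+a(u-u_H,w)=a(u-u_H,w),
\]
where the first contribution vanishes thanks to the defining property of $\Pi u$. Dividing by $|w|_{H^1}$ gives $|\Pi u-u_H|_{H^1}\le \sup_{v\in\ZH\setminus\{0\}}\tfrac{|a(u-u_H,v)|}{|v|_{H^1}}=:S$.

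Plugging this into the Pythagorean identity and using the elementary inequality $\sqrt{A^2+B^2}\le A+B$ for $A,B\ge 0$, I conclude
\[
|u-u_H|_{H^1}\le\sqrt{\bigl(\inf_{v\in\ZH}|u-v|_{H^1}\bigr)^2+S^2}\le \inf_{v\in\ZH}|u-v|_{H^1}+S,
\]
which is exactly (\ref{eq:strang}). There is no real obstacle here: the argument is purely abstract linear algebra once one has verified that $a$ defines an inner product on the broken, divergence-free, hole-vanishing space $\ZH$, and the only substantive ingredient from the paper is the broken Poincar\'e inequality that underwrites that fact.
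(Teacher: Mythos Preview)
Your argument is correct and is precisely the standard proof of Strang's second lemma. The paper does not actually supply its own proof of this statement; it simply cites it as a known result from \cite[Lemma~10.1.7]{brenner}, so there is no alternative argument to compare against. Your only paper-specific observation---that $a(\cdot,\cdot)$ is positive definite on $\ZH$ because functions there vanish on $B^\varepsilon$ and the broken Poincar\'e inequality (Lemma~\ref{lem:poincare perforated broken}) applies---is exactly what is needed to justify invoking the abstract result in this setting.
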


The first term in~\eqref{eq:strang} is the usual best approximation error
already present in the classical C\'{e}a Lemma. The second term of~\eqref{eq:strang}
is the \emph{nonconformity error}, that is, roughly speaking, how far~$\ZH$
is from the divergence free subspace of~$H_0^1(\Omega)$. 

To bound the first term in~\eqref{eq:strang}.
we recall that $u$ is the solution to problem~\eqref{genP}--\eqref{genB} and introduce
\begin{align*}
v_{H}(x) &=\sum_{E\in \mathcal{E}_{H}}\sum_{i=1}^3
  \left(\int_E u\cdot\psi_{E,i}\right) {\Phi}_{E,i}(x) , \\
q_{H}(x) &=\sum_{E\in \mathcal{E}_{H}}\sum_{i=1}^3
  \left(\int_E u\cdot\psi_{E,i}\right) {\pi}_{E,i}(x)
\end{align*}
with $\Phi_{E,i}$ and~$\pi_{E,i}$ defined in Lemma~\ref{PhiGen} with the weights $\omega_{E_i}$ chosen as in (\ref{CR3}).
Observe, for all edges $E\in \mathcal{E}_{H}$ and
all cells $T\in \mathcal{T}_{H}$, that
\begin{align}
\int_{E}v_{H} &= \int_{E}u\,,
\nonumber
\\
\int_{E}\psi_{E}v_{H}\cdot n_{E} & = 
\int_{E}\psi_{E}u\cdot n_{E}\,,
\nonumber
\\
\left(\nabla v_{H}\right)n-q_{H}n & = {a}
+b\,n_{E}\psi_{E}\quad\text{ on (each side of) $E$}
\label{eq:par_construction}
\\
& \qquad\text{ with $a\in \mathbb{R}^2,b\in \mathbb{R}$, }
\nonumber
\\
-\Delta v_{H}+\nabla q_{H} & = 0\text{ on $T\cap \Omega
_{\varepsilon }$}.
\nonumber
\end{align}
By construction, $v_{H}\in \VH{}$. Moreover, it is easy to see that $
v_{H}\in \ZH$. Indeed, for any $T\in T_{H}$ we have $\Div v_{H}=c_{T}$ on $
T\setminus B^{\varepsilon }$ with some constant $c_{T}$ and
\begin{equation*}
c_{T}|T\setminus B^{\varepsilon }|=\int_{T}\Div v_{H}=\int_{\partial
T}n\cdot v_{H}=\int_{\partial T}n\cdot u=0\,,
\end{equation*}
so that $c_{T}=0$.

We also have, setting $p=p^*+p'$, as in Lemma \ref{lem:brique}
\begin{equation}
\begin{aligned}
|u-v_H|_{H^{1}(\Omega^{\varepsilon })}^{2}
&=
\sum_{T\in \mathcal{T}
_{H}}\int_{\Omega_{\varepsilon }\cap T}\nabla (u-v_{H}):\nabla (u-v_H) 
\\
&\qquad -\sum_{T\in \mathcal{T}_{H}}\int_{\Omega_{\varepsilon
}\cap T}(p^{\prime }-q_{H})\Div(u-v_H)
\\
&=\sum_{T\in \mathcal{T}_{H}}\int_{\Omega^{\varepsilon }\cap T}(-\Delta
(u-v_{H})+\nabla (p^{\prime }-q_{H}))\cdot (u-v_H)
\\
&\quad+\sum_{T\in \mathcal{T}
_{H}}\int_{\partial T\cap \Omega^{\varepsilon }}(u-v_H) \ \cdot ((\nabla
u)n-p^{\prime }n)
\\
&\quad-\sum_{T\in \mathcal{T}_{H}}\int_{\partial T\cap \Omega
_{\varepsilon }}(u-v_H) \cdot ((\nabla v_{H})n-q_{H}n)\,.
\end{aligned}
\label{eq:decompo_error}
\end{equation}
We now successively bound the three terms of the right-hand side of~
\eqref{eq:decompo_error}.

\begin{itemize}
\item For the first term, we observe
that $\int_{E}s\left( u-v_H \right) \cdot n_{E}=0$ \ for any $E\in \mathcal{E}_{H}$ and for
any polynomial $s\in \mathbb{P}_{1}(E)$. It means that $\forall T\in
\mathcal{T}_{H}$ and $\forall a\in \mathbb{R}^2$
\begin{equation*}
\int_{T}a\cdot (u-v_H) =\int_{T}\nabla (a\cdot x)\cdot (u-v_H) =\int_{\partial
T}(a\cdot x)n\cdot (u-v_H) =0\,,
\end{equation*}
since $\Div(u-v_H) =0$. In particular,
$$
\sum_{T\in \mathcal{T}_{H}}\int_{\Omega^{\varepsilon }\cap
T}\Pi_{H}(f-\nabla p^{\ast })\cdot (u-v_H) =0
$$
where $\Pi_H$ is the projection on piecewise constant functions, as in Lemma \ref{lem:P0_EF}.
Reminding the last line in (\ref{eq:par_construction}) and using \eqref{eq:P0_EF} and \eqref{eq:poincare perforated broken}, we get
\begin{align*}
&\hspace{-10mm} \sum_{T\in \mathcal{T}_{H}}\int_{\Omega^{\varepsilon }\cap
T}(-\Delta (u-v_{H})+\nabla (p^{\prime }-q_{H}))\cdot (u-v_H) 
\\
=&\quad \sum_{T\in \mathcal{T}_{H}}\int_{\Omega^{\varepsilon }\cap
T}(f-\nabla p^{\ast }-\Pi_{H}(f-\nabla p^{\ast }))\cdot (u-v_H)
\\
\leq&\quad\Vert (f-\nabla p^{\ast })-\Pi_{H}(f-\nabla p^{\ast })\Vert
_{L^2(\Omega)}\,\Vert u-v_H \Vert_{L^2(\Omega^{\varepsilon })}
\\
\leq &\quad C \varepsilon H|(f-\nabla p^{\ast })|_{H^{1}(\Omega)}|u-v_H
|_{H^{1}(\Omega^{\varepsilon })}\,.
\end{align*}

\item The second term in (\ref{eq:decompo_error}) is bounded by
\[
C\varepsilon \left( \sqrt{\varepsilon }+\sqrt{\frac{\varepsilon }{H}}\right)
\left| u-v_H \right|_{H^{1}(\Omega^{\varepsilon })}\,\Vert f-\nabla
p^{\ast }\Vert_{H^{2}(\Omega)\cap C^1(\bar\Omega)}\,,
\]
thanks to Lemma~\ref{lem:brique}.

\item The third term in (\ref{eq:decompo_error}) vanishes. Indeed, on each
edge~$E$, we know from~\eqref{eq:par_construction} that $n\cdot \nabla
v_{H}-q_{H}\,n={a}+b\,n_{E}\psi_{E}$ with some constants $a\in\R^2$, $b\in\R$ and $\int_{E}(
{a}+b\,n_{E}\psi_{E})\cdot (u-v_H) =0$ by construction of $u-v_H$.
\end{itemize}
Collecting all these estimates, we deduce that
\begin{equation*}
|u-v_{H}|_{H^{1}(\Omega^{\varepsilon })}
\leq C\varepsilon \left( \sqrt{\varepsilon }+H+\sqrt{\frac{\varepsilon }{H}
}\right) \Vert f-\nabla p^{\ast }\Vert_{H^{2}(\Omega)\cap C^1(\bar\Omega)} .  
\label{eq:fin_step1}
\end{equation*}
This concludes the estimate for the first term of~\eqref{eq:strang}.

We now turn to the nonconformity error,
\emph{i.e.} the second term in~\eqref{eq:strang}.
Let $v\in \ZH$. We use~\eqref{MsFEMz} and $\Div v=0$ to compute
\begin{eqnarray*}
\int_{\Omega^{\varepsilon}}\nabla (u-u_H):\nabla v
&=&\sum_{T\in \mathcal{T}_{H}}\left( \int_{\Omega^{\varepsilon }\cap
T}\nabla u:\nabla v-\int_{\Omega^{\varepsilon }\cap T}p^{\prime }\Div 
v\right) -\int_{\Omega^{\varepsilon }}f\cdot v \\
&=&\sum_{T\in \mathcal{T}_{H}}\int_{\partial T\cap \Omega^{\varepsilon
}}v\cdot ((\nabla u)n-p^{\prime }n)-\sum_{T\in \mathcal{T}_{H}}\int_{\Omega
_{\varepsilon }\cap T}(f+\Delta u-\nabla p^{\prime })\cdot v \\
&=&\sum_{T\in \mathcal{T}_{H}}\int_{\partial T\cap \Omega^{\varepsilon
}}v\cdot ((\nabla u)n-p^{\prime }n)-\sum_{T\in \mathcal{T}
_{H}}\int_{T}\nabla p^{\ast }\cdot v\,.
\end{eqnarray*}
The first term in the right-hand side above is bounded thanks to Lemma~\ref
{lem:brique} by $C\varepsilon \left( \sqrt{
\varepsilon }+\sqrt{\frac{\varepsilon }{H}}\right) \Vert f-\nabla p^{\ast
}\Vert_{H^{2}(\Omega)\cap C^1(\bar\Omega)}\left| v\right|_{H^{1}}$. To bound the second term, we shall use~$I_H p^\ast\in H^1(\Omega)$
as constructed in Lemma \ref{lem:P1_EF}.
Observe that
\begin{equation}\label{CR3trick1}
\sum_{T\in \mathcal{T}_H}\int_{T}\nabla (I_H p^\ast)\cdot v=\sum_{E\in \mathcal{E}
_{H}}\int_{E}I_H p^\ast\; n_{E}\cdot \lbrack \lbrack v]]=0\,,
\end{equation}
since $I_H p^\ast$ is a polynomial of degree $\le 1$ on each edge $E\in \mathcal{E}_{H}$.
Thus, using Lemmas~\ref{lem:P1_EF} and \ref{lem:poincare perforated broken},
\begin{multline}\label{CR3trick2}
\left\vert \sum_{T\in {{\mathcal T}_{H}}}\int_{T}\nabla p^{\ast }\cdot
v\right\vert
=\left\vert \sum_{T\in {{\mathcal T}_{H}}}\int_{T}\nabla
(p^{\ast }-I_H p^\ast)\cdot v\right\vert
\\
\leq |p^{\ast }-I_H p^\ast|_{H^{1}(\Omega)}\Vert v \Vert_{L^2(\Omega)}\leq
\varepsilon H|p^{\ast }|_{H^{2}(\Omega)}|v |_{H^{1}(\Omega)}\,.
\end{multline}
Finally,
\begin{multline*}
\sup_{v\in \VH{}\setminus \{0\}}\frac{|a (u-u_{H},v)|}{\left| v\right|
_{H^{1}}}
\\
\leq C\varepsilon \left[ \left( \sqrt{\varepsilon }+\sqrt{\frac{
\varepsilon }{H}}+H\right) \Vert f-\nabla p^{\ast }\Vert_{H^{2}(\Omega)\cap C^1(\bar\Omega)}
+H|p^{\ast }|_{H^{2}(\Omega)}\right]\,,
\end{multline*}
which proves the estimate for $( u-u_{H})$ in \eqref{MainErrEst}.

\begin{remark}\label{remCR3trick}
 We have just seen that the nonconformity error has been treated with the help of the trick (\ref{CR3trick1})--(\ref{CR3trick2}) which requires the jumps of the normal component of velocities in $\ZH$ to be orthogonal to polynomials of degree 1 on any edge. This is exactly the motivation to introduce the weights CR3 (\ref{CR3}). This proof would not work with CR2 weights, even if the MsFEM bubbles were added, as suggested in Remark \ref{rembub}.  
\end{remark}

We turn now to the error estimate for pressure. Using operators $\Pi_H$ and $I_H$ from Lemmas \ref{lem:P0_EF} and \ref{lem:P1_EF}, we set $p_H^{\ast} = \Pi_H I_H p^{\ast} \in M_H$, i.e. the
$L^2$-orthogonal projection of $I_H p^{\ast}$ on $M_H$. By interpolation
estimates (\ref{eq:P0_EF}), (\ref{eq:P1_EFl2}) and homogenization bounds
\begin{align}
  \|p_H^{\ast} - p\|_{L^2 (\Omega^{\varepsilon})} &\le \| \Pi_H ( I_H
  p ^{\ast} - p^{\ast})  \|_{L^2 (\Omega)} + \| \Pi_H p ^{\ast} -
  p^{\ast} \|_{L^2 (\Omega)} + \|  p ^{\ast} - p  \|_{L^2
  (\Omega^{\varepsilon})} \notag\\
  & \le \|I_H p ^{\ast} - p^{\ast} \|_{L^2
  (\Omega)} + \| \Pi_H p ^{\ast} - p^{\ast} \|_{L^2 (\Omega)} + \|
  p ^{\ast} - p  \|_{L^2 (\Omega^{\varepsilon})} \notag\\
  &\le C (H|p^{\ast} |_{H^1
  (\Omega)} + \sqrt{\varepsilon} \|f - \nabla p^{\ast} \|_{H^2 (\Omega) \cap C^1 (\bar\Omega)}) .
  \label{pHinterp}
\end{align}
Now, in view of the inf-sup lemmas (\ref{infsupD}) and (\ref{infsupq}), there exists $v_H\in\VH$ such that for any $T\in\TH$
\begin{equation}\label{vHpH}
  \Div v_H=p_H - p_H^{\ast} \text{ on }T\cap\Omega^\eps
  \quad\text{and}\quad
  {|v_H |_{H^1(\Omega)}} \le \frac{C}{\varepsilon} \|p_H - p_H^{\ast} \|_{L^2(\Omega^\eps)} \,. 
\end{equation}
Integration by parts element by element yields
\begin{multline*}
\|p_H - p_H^{\ast} \|_{L^2(\Omega^\eps)}^2=
  \int_{{\Omega}^{{\varepsilon}}}(p_{H}-p_{H}^{{\ast}})\Div v_{H} \\
=-\int_{{\Omega}^{{\varepsilon}}}f{\cdot}v_{H}+\int_{{\Omega}^{{\varepsilon}}}{\nabla}u_{H}:{\nabla}v_{H}-\int_{{\Omega}^{{\varepsilon}}}p_{H}^{{\ast}}\Div v_{H}\\  
=\int_{{\Omega}^{{\varepsilon}}}({\Delta}u-{\nabla}(p^{{\ast}}+p')){\cdot}v_{H}+\int_{{\Omega}^{{\varepsilon}}}{\nabla}u_{H}:{\nabla}v_{H}-\int_{{\Omega}^{{\varepsilon}}}p_{H}^{{\ast}}\Div v_{H}\\
=\int_{{\Omega}^{{\varepsilon}}}{\nabla}(u_{H}-u):{\nabla}v_{H}+\int_{{\Omega}^{{\varepsilon}}}p'\Div v_{H}+\sum_{T{\in}{\mathcal{T}}_{H}}\int_{{\partial}T{\cap}{\Omega}_{{\varepsilon}}}v_{H}{\cdot}(({\nabla}u)n-p'n)\\
    \qquad -\int_{{\Omega}^{{\varepsilon}}}{\nabla}(p^{{\ast}}-I_{H}p^{{\ast}}){\cdot}v_{H}-\int_{{\Omega}^{{\varepsilon}}}{\nabla}I_{H}p^{{\ast}}{\cdot}v_{H}-\int_{{\Omega}^{{\varepsilon}}}p_{H}^{{\ast}}\Div v_{H}\\
\end{multline*}
In fact, the last two terms above cancel each other. Indeed,
\begin{multline*}
 -\int_{{\Omega}^{{\varepsilon}}}{\nabla}I_{H}p^{{\ast}}{\cdot}v_{H}-\int_{{\Omega}^{{\varepsilon}}}p_{H}^{{\ast}}\Div v_{H}
=-\sum_{E{\in}{\mathcal{E}}_{H}}\int_{E}I_Hp^*[[n{\cdot}v_{H}]]\\
    + \int_{{\Omega}^{\varepsilon}}(I-\Pi_H)(I_{H}p^{{\ast}}) \Div v_{H}=0.
\end{multline*}
This is zero since $[[n\cdot v_H]]$ is orthogonal to the polynomials of degree $\le 1$ on the edges and
$\Div{v}_H \in M_H$. 

We can now apply the already proven upper bound for the velocity error $(u-u_H)$ in (\ref{MainErrEst}), 
 Lemma~\ref{lem:brique}, and bound (\ref{CR3trick2}) to conclude
\begin{multline*}
 \|p_H - p_H^{\ast} \|_{L^2(\Omega^\eps)}^2 \\ \leq C \varepsilon \left[
   \left( \sqrt{\varepsilon} + \sqrt{\frac{\varepsilon}{H}} + H \right) \|f -
   \nabla p^{\ast} \|_{H^2 (\Omega) \cap C^1 (\bar\Omega)} + H|p^{\ast} |_{H^2 (\Omega)} \right]
     \|v_H\|_{H^1(\Omega)}
   \\+  \|p'\|_{L^2(\Omega^\eps)}\|\Div v_H\|_{L^2(\Omega)}
   \,, 
\end{multline*}
Recalling the properties of $v_H$ (\ref{vHpH}) and the homogenization estimate (\ref{errP}) for $p' = p - p^{\ast}$, this entails
\[ \|p_H - p_H^{\ast} \|_{L^2} \leq C \left[
   \left( \sqrt{\varepsilon} + \sqrt{\frac{\varepsilon}{H}} + H \right) \|f -
   \nabla p^{\ast} \|_{H^2 (\Omega) \cap C^1 (\bar\Omega)} + H|p^{\ast} |_{H^2 (\Omega)} \right]
   \,, \]
which in combination with (\ref{pHinterp}) gives the error estimate for
pressure in (\ref{MainErrEst}) by the triangle inequality.

\section{Numerical results}
\label{sec:numres}

In this section we show some results of numerical computations, for both variants of our
method, CR$_2$ and CR$_3$, cf. (\ref{CR2}) and (\ref{CR3}). All calculations are performed in \texttt{FreeFem++}
\cite{freefem}.

\subsection{Implementation details}
The Crouzeix-Raviart MsFEM as presented so far relies on the exact solutions of the local problems in the construction of the basis functions. In practice, these problems should be discretized on a mesh sufficiently fine to resolve the geometry of obstacles. To avoid complex and ad-hoc grid generation methods when solving (\ref{genP})--(\ref{genB}) in $\Omega^\epsilon$ we replace it with the penalized problem, cf. \cite{StokesPart1}. To calculate both the basis functions and the reference solutions, we use the P1-P1 FEM on the uniform Cartesian grid $\mathcal{T}_h$ of step $h<<H$. As is well known, this choice of velocity and pressure spaces requires some stabilization which weakens the condition $\nabla \cdot \vec{u} = 0$. The simplest way to achieve this is by perturbing the incompressibility constraint with a pressure Laplacian term, see \cite{brezzipitkaranta} and \cite{StokesPart1}. The reference solution is calculated on the global mesh of the same size as that for the MsFEM basis functions.

\subsection{Test case with periodic holes}\label{sec:test1}
For our first test case we choose $\Omega=(0,1)^2$ and $B^\eps$ as the set of discs of radius $\eps/4$ placed periodically on a regular grid of period $\eps=\frac{1}{128}$. We solve Stokes equations (\ref{genP})--(\ref{genB}) on $\Omega^\eps=\Omega\setminus B^\eps$ with $f=\left(\begin{array}{c}
-(x_2-1/2) \\ x_1-1/2 
\end{array}\right)$. The fine regular Cartesian mesh with $h=\frac{1}{1536}$ is used to compute both the reference solution and the MsFEM basis functions. The results are reported in \figref{fig:test1}. The error curves for the velocity are compatible with theoretical estimate (\ref{MainErrEst}). We observe indeed a decrease of the error with refinement in $H$ for $H>>\eps$ and a plateau when $H\sim\eps$. As expected, CR3 variant of the method produces a much more accurate solution than CR2 one. The error curves for $p-p_H$ (with $p_H$ being the piecewise constant approximation to the exact pressure $p$) are qualitatively even better than the theoretical bound with respect to the mesh refinement. Apart from the piecewise approximation $p_H$ we also report on an ``oscillating'' reconstruction $p_H+\pi_H(u_H)$ as suggested by (\ref{XHdef1}), i.e. reusing the local pressure contributions $\pi_{E,i}$ associated to the velocity basis functions $\Phi_{E,i}$. One could hope that adding $\pi_H(u_H)$ would improve the accuracy of $p_H$. The numerical experiments do not support this conjecture: in fact,  adding $\pi_H(u_H)$ can even deteriorate the accuracy. 

\begin{figure}[tbp]
    \centering
    \includegraphics[width=0.45\textwidth]{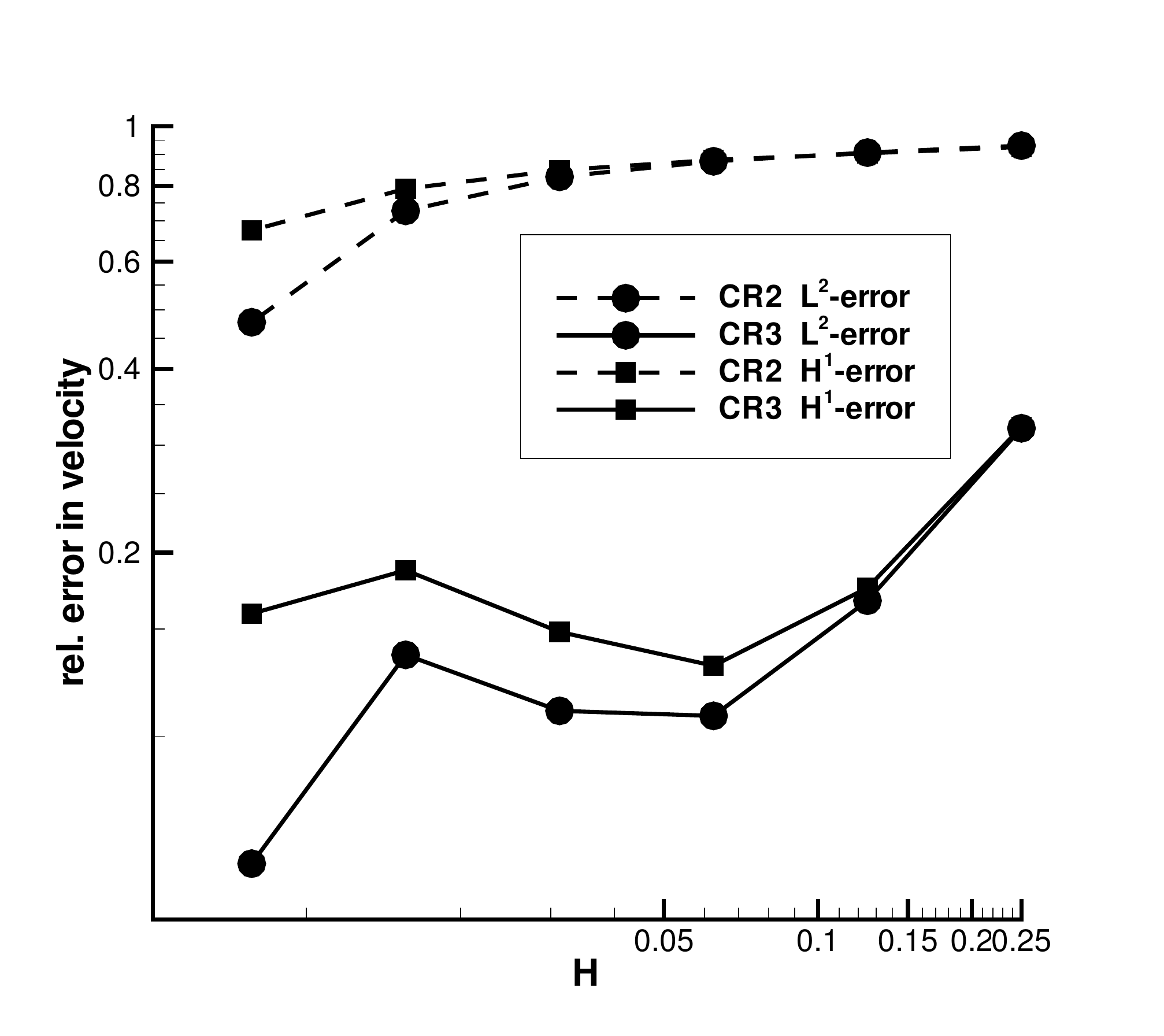}
    \quad
    \includegraphics[width=0.45\textwidth]{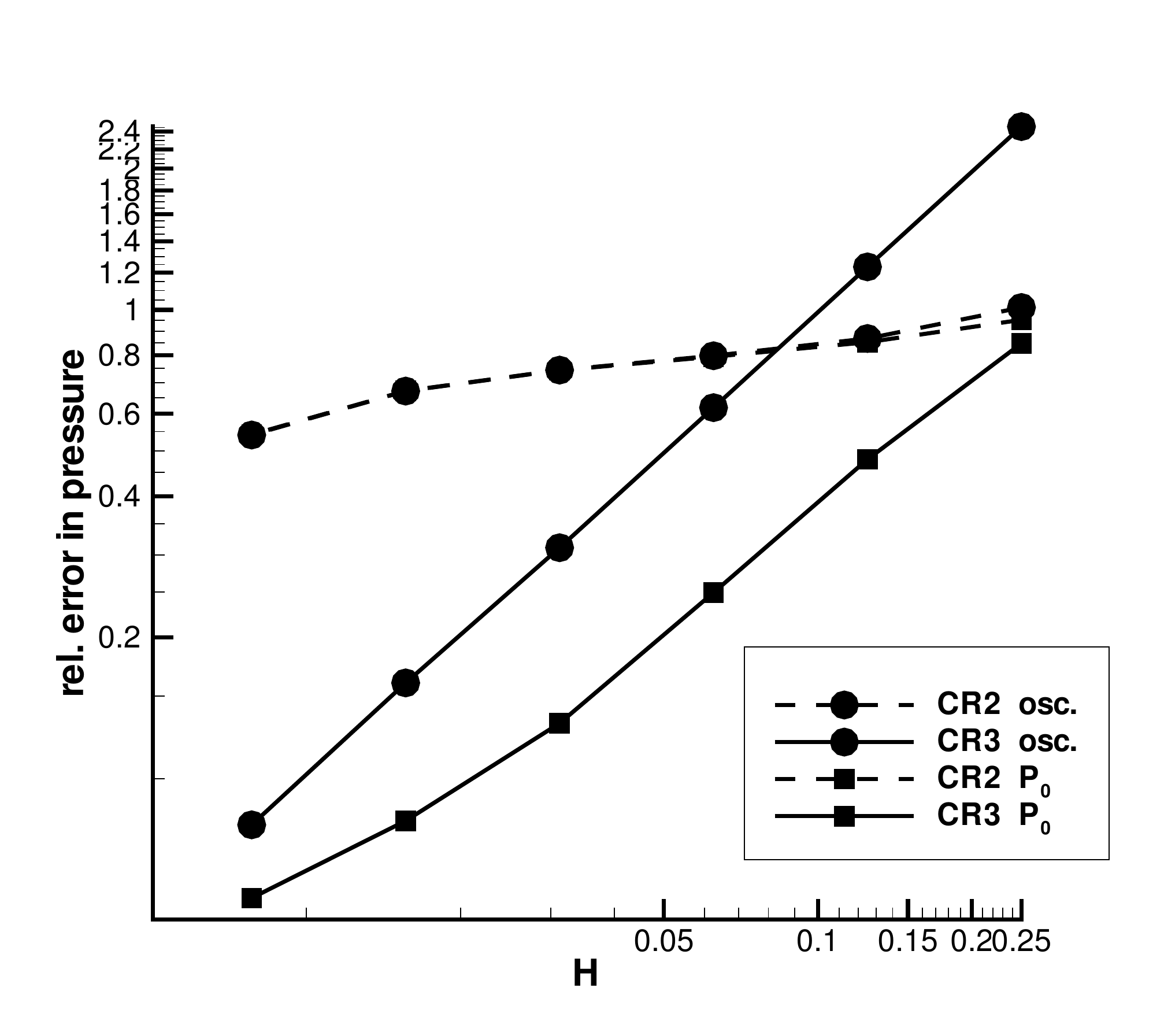}
    \caption{Test case of Section \ref{sec:test1}. Left: the relative error in velocity $u-u_H$ in $L^2$ and $H^1$ norms. Right: the relative error in pressure computed either as $p-p_H$ (denoted P$_0$) or as $p-p_H-\pi_H(u_H)$ (denoted osc.). The mesh size $H$ varies from $\frac{1}{4}$ down to $\frac{1}{128}$ with $\eps=\frac{1}{128}$.}
    \label{fig:test1}
\end{figure}

\subsection{Channel flow}\label{sec:channel} We turn now to a more realistic test case: a flow in a rectangle $\Omega=(0,1.5)\times (0,1)$ with several obstacles $B^\eps$ inside with parabolic velocity profile prescribed on the vertical edges. We solve thus (\ref{genP})--(\ref{genD})--(\ref{genBg}) with $f=0$ and the boundary conditions $u=4x_2(1-x_2)e_1$ on $\partial\Omega$.   The adaptation of our method in view of non-homogeneous boundary conditions is presented in Remark \ref{BCg}.

The obstacles $B^\eps$ are presented in \figref{fig:channel}, top left.  They are constructed as follows: we define the perforation pattern $B$ on the reference cell $Y=(0,1)^2$ as $\mathbbm{1}_{|x_1+x_2-1|<\frac 12} \mathbbm{1}_{|x_1-x_2|<\frac 14}$, then we shrink $Y$ by factor $\eps$ (with $\eps=0.15$ in \figref{fig:channel}) and repeat it periodically. Finally, we eliminate the holes outside the rectangle $(0.25,1.4) \times (0,1)$ in order to leave a little space between the region where the flow is perturbed by the obstacle and the zone of free Poiseuille flow to the left and to the right. The reference solution and MsFEM CR2 and CR3 solutions (namely the $u_1$ velocity component) are reported in  \figref{fig:channel}. We observe that the CR3 variant captures the essential features of the solution even on a very coarse $6\times 4$ mesh, while the solution produced by the CR2 variant is completely wrong.   

\begin{figure}[tbp]
    \centering
    \includegraphics[width=0.49\textwidth]{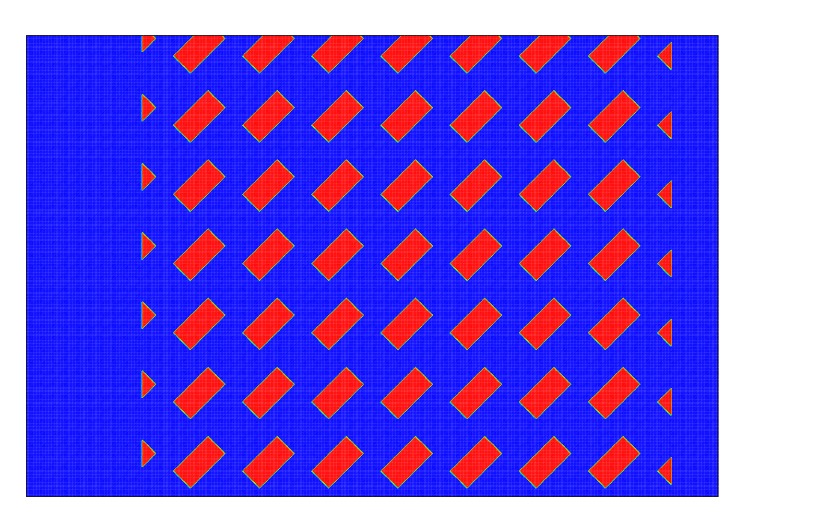}
    \includegraphics[width=0.49\textwidth]{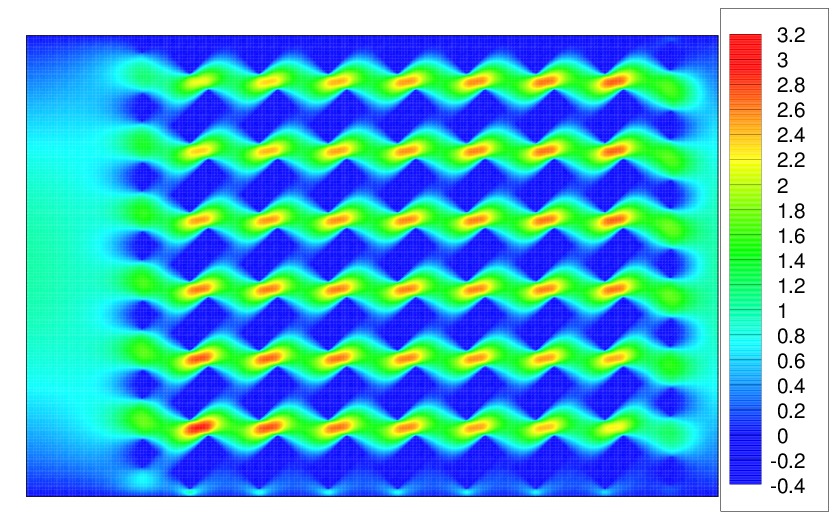}

    \includegraphics[width=0.48\textwidth]{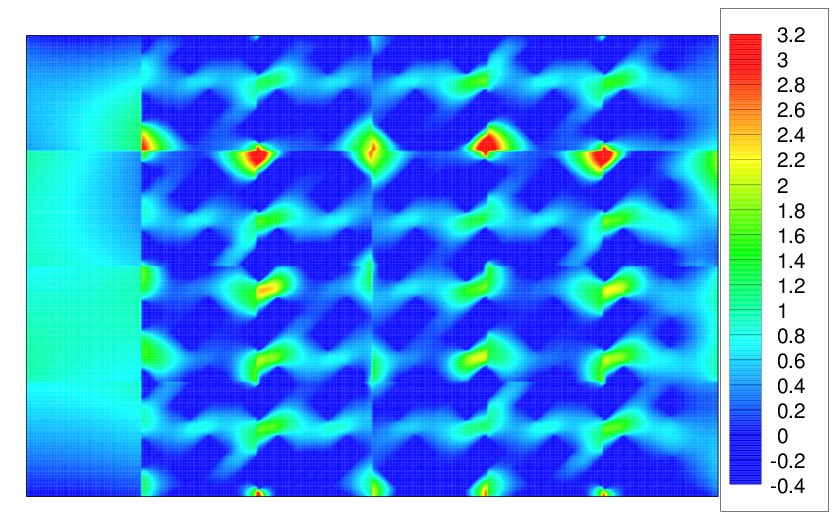}
    \includegraphics[width=0.48\textwidth]{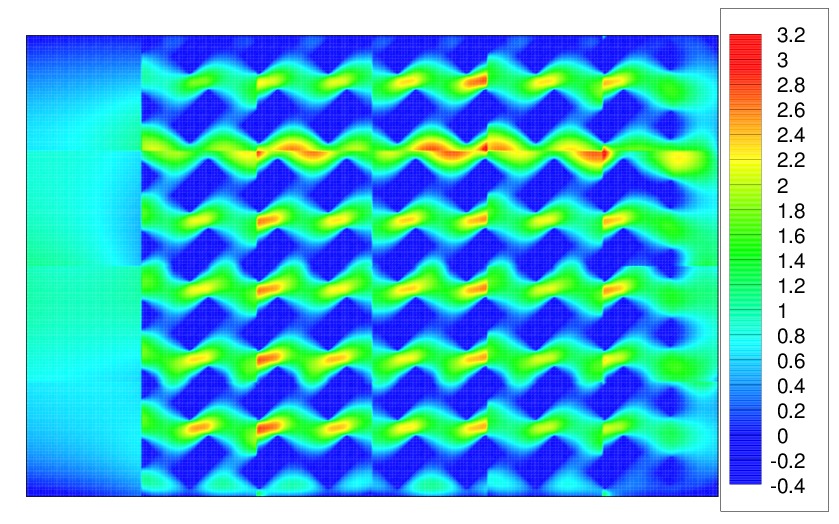}
    \caption{Test case of Section \ref{sec:channel}: channel flow. Top left : fluid domain $\Omega^\eps$ in blue (with obstacles in red). Top right: the reference solution on the $750\times 500$ grid. Bottom: the MsFEM solution on $6\times 4$ grid; CR2 on the left, CR3 on the right. The $u_1$ velocity component is represented on the contour plots.}
    \label{fig:channel}
\end{figure}

\bibliographystyle{siam}
\bibliography{crmsfem}

\begin{thebibliography}{10}

\bibitem{allaire89}
{\sc G.~Allaire}, {\em Homogenization of the stokes flow in a connected porous
  medium}, Asymptotic Analysis, 2 (1989), pp.~203--222.

\bibitem{BV00}
{\sc L.~Ambrosio, N.~Fusco, and D.~Pallara}, {\em Functions of bounded
  variation and free discontinuity problems}, vol.~254, Clarendon Press Oxford,
  2000.

\bibitem{VirtualFE}
{\sc L.~Beir{\~a}o~da Veiga, F.~Brezzi, A.~Cangiani, G.~Manzini, L.~Marini, and
  A.~Russo}, {\em Basic principles of virtual element methods}, Mathematical
  Models and Methods in Applied Sciences, 23 (2013), pp.~199--214.

\bibitem{Chainais13}
{\sc M.~Bessemoulin-Chatard, C.~Chainais-Hillairet, and F.~Filbet}, {\em On
  discrete functional inequalities for some finite volume schemes}, IMA Journal
  of Numerical Analysis, 35 (2014), pp.~1125--1149.

\bibitem{brenner}
{\sc S.~C. Brenner and L.~R. Scott}, {\em The mathematical theory of finite
  element methods}, vol.~15 of Texts in Applied Mathematics, Springer, New
  York, third~ed., 2008.

\bibitem{brezzipitkaranta}
{\sc F.~Brezzi and J.~Pitkaranta}, {\em On the stabilization of finite element
  approximations of the stokes problem}, Efficient Solutions of Elliptic
  Systems, Notes on Numerical Fluid Mechanics, 10 (1984), pp.~11--19.

\bibitem{brownefendievhoang}
{\sc D.~Brown, Y.~Efendiev, and V.~Hoang}, {\em An efficient hierarchical
  multiscale finite element method for stokes equations in slowly varying
  media}, Multiscale Modeling and Simulation, 11 (2013), pp.~30--58.

\bibitem{brownetal}
{\sc D.~Brown, Y.~Efendiev, G.~Li, P.~Popov, and V.~Savatorova}, {\em
  Multiscale modeling of high contrast brinkman equations with applications to
  deformable porous media}, in Poromechanics V, 2013, ch.~235, pp.~1991--1996.

\bibitem{chuetal}
{\sc J.~Chu, Y.~Efendiev, V.~Ginting, and T.~Hou}, {\em Flow based oversampling
  technique for multiscale finite element methods}, Advances in Water
  Resources, 31 (2008), pp.~599 -- 608.

\bibitem{CRRairo}
{\sc M.~Crouzeix and P.~A. Raviart}, {\em Conforming and nonconforming finite
  element methods for solving the stationary stokes equations i}, RAIRO, 7
  (1973), pp.~33--75.

\bibitem{muljadicrmsfem}
{\sc P.~{Degond}, A.~{Lozinski}, B.~P. {Muljadi}, and J.~{Narski}}, {\em
  {Crouzeix-Raviart MsFEM with Bubble Functions for Diffusion and
  Advection-Diffusion in Perforated Media}}, Communications in Computational
  Physics, 17 (2015), pp.~887--907.

\bibitem{dorobantuengquist}
{\sc M.~Dorobantu and B.~Engquist}, {\em Wavelet-based numerical
  homogenization}, SIAM J. Numer. Anal.,  (1998), pp.~540--559.

\bibitem{efendievetal}
{\sc Y.~Efendiev, J.~Galvis, G.~Li, and M.~Presho}, {\em Generalized multiscale
  finite element methods. oversampling strategies}, arXiv:1304.4888,  (2013).

\bibitem{houefendiev}
{\sc Y.~Efendiev and T.~Y. Hou}, {\em Multiscale finite element method, theory
  and applications. Surveys and tutorials in the applied mathematical
  sciences}, Springer, New York, 2009.

\bibitem{GiraultRaviart}
{\sc V.~Girault and P.-A. Raviart}, {\em Finite element methods for
  {N}avier-{S}tokes equations}, vol.~5 of Springer Series in Computational
  Mathematics, Springer-Verlag, Berlin, 1986.
\newblock Theory and algorithms.

\bibitem{freefem}
{\sc F.~Hecht}, {\em New development in freefem++}, J. Numer. Math., 20 (2012),
  pp.~251--265.

\bibitem{henningpeterseim}
{\sc P.~Henning and D.~Peterseim}, {\em Oversampling for the multiscale finite
  element method}, arXiv:1211.5954,  (2012).

\bibitem{hornung}
{\sc U.~Hornung}, {\em Homogenization and Porous Media, Interdisciplinary
  Applied Mathematics}, vol.~6, Springer, 1997.

\bibitem{thomhou}
{\sc T.~Y. Hou and X.~H. Wu}, {\em A multiscale finite element method for
  elliptic problems in composite materials and porous media}, J. Comput. Phys.,
  134 (1997), pp.~169--189.

\bibitem{jager95}
{\sc W.~J{\"a}ger and A.~Mikelic}, {\em On the flow conditions at the boundary
  between a porous medium and an impervious solid}, Progress in partial
  differential equations: the Metz surveys, 3 (1995), pp.~145--161.

\bibitem{Jikov1994}
{\sc V.~Jikov, S.~Kozlov, and O.~Oleinik}, {\em Homogenization of differential
  operators and integral functionals}, Springer-Verlag, 1994.

\bibitem{kevrekidisetal}
{\sc I.~Kevrekidis, C.~Gear, J.~Hyman, P.~Kevrekidis, O.~Runborg, and
  C.~Theodoropoulos}, {\em Equation-free, coarse-grained multiscale
  computation: enabling microscopic simulators to perform system-level
  analysis}, Commun. Math. Sci., 1 (2003), pp.~715--762.

\bibitem{MsFEMCR1}
{\sc C.~Le~Bris, F.~Legoll, and A.~Lozinski}, {\em Msfem {\`a} la
  crouzeix-raviart for highly oscillatory elliptic problems}, Chinese Annals of
  Mathematics, Series B, 34 (2013), pp.~113--138.

\bibitem{MsFEMCR2}
\leavevmode\vrule height 2pt depth -1.6pt width 23pt, {\em An msfem type
  approach for perforated domains}, SIAM MMS, 12 (2014), pp.~1046--1077.

\bibitem{Mik96}
{\sc E.~Maru{\v{s}}i{\'c}-Paloka and A.~Mikeli{\'c}}, {\em An error estimate
  for correctors in the homogenization of the stokes and navier-stokes
  equations in a porous medium}, Boll. Unione Mat. Ital, 7 (1996),
  pp.~661--671.

\bibitem{StokesPart1}
{\sc B.~P. {Muljadi}, P.~{Degond}, A.~{Lozinski}, and J.~{Narski}}, {\em
  {N}on-{C}onforming {M}ultiscale {F}inite {E}lement {M}ethod for {S}tokes
  {F}lows in {H}eterogeneous {M}edia. {P}art {I}: {M}ethodologies and
  {N}umerical {E}xperiments}, SIAM MMS, 13 (2015), pp.~1146--1172.

\bibitem{nolenetal}
{\sc J.~Nolen, G.~Papanicolaou, and O.~Pironneau}, {\em A framework for
  adaptive multiscale method for elliptic problems}, SIAM MMS, 7 (2008),
  pp.~171--196.

\bibitem{Sanchez80}
{\sc E.~S{\'a}nchez-Palencia}, {\em Non-homogeneous media and vibration
  theory}, vol.~127 of Lecture notes in physics, 1980.

\bibitem{Tartar80}
{\sc L.~Tartar}, {\em Incompressible fluid flow in a porous medium-convergence
  of the homogenization process}, Appendix of \cite{Sanchez80},  (1980).

\bibitem{weinanengquist}
{\sc E.~Weinan and B.~Engquist}, {\em The heterogeneous multi-scale methods},
  Comm. Math. Sci., 1 (2003), pp.~87--133.

\end{thebibliography}

\end{document}